\newcommand {\ctn}{\citet} 
\newtheorem{theorem}{Theorem}
\newtheorem{corollary}[theorem]{Corollary}
\newtheorem{lemma}[theorem]{Lemma}
\newtheorem{remark}[theorem]{Remark}
\newenvironment{proof}[1][Proof]{\textbf{#1.} }{\ \rule{0.5em}{0.5em}}
\numberwithin{equation}{section}
\numberwithin{algo}{section}
\numberwithin{table}{section}
\numberwithin{figure}{section}
\begin{document}


\title{\vspace{-0.8in}
Bayes Meets Riemann -- Bayesian Characterization of Infinite Series
with Application to Riemann Hypothesis}
\author{Sucharita Roy and Sourabh Bhattacharya\thanks{
Sucharita Roy is an Assistant Professor in St. Xavier's College, Kolkata, pursuing PhD 
in Interdisciplinary Statistical Research Unit, Indian Statistical Institute, 203, B. T. Road, Kolkata 700108.
Sourabh Bhattacharya is an Associate Professor in Interdisciplinary Statistical Research Unit, Indian Statistical
Institute, 203, B. T. Road, Kolkata 700108.
Corresponding e-mail: sourabh@isical.ac.in.}}
\date{\vspace{-0.5in}}
\maketitle%

\begin{abstract}

In the classical literature on infinite series there are various tests to determine if a given
infinite series converges, diverges, or oscillates. But unfortunately, for very many infinite series
all the existing tests can fail to provide definitive answers. 
In this article we propose a novel Bayesian theory for assessment of convergence properties of any given
infinite series. Remarkably, this theory attempts to provide conclusive answers to the question
of convergence even where all the existing tests of convergence fail. 
We apply our ideas to seven different examples, obtaining very encouraging results.
Importantly, we also apply our ideas
to investigate the Riemann Hypothesis, and obtain results that do not completely support the conjecture.

We also extend our ideas to develop a Bayesian theory on oscillating series, where we allow
even infinite number of limit points. Analysis of Riemann Hypothesis using Bayesian multiple limit points
theory yielded almost identical results as the Bayesian theory of convergence assessment.
\\[2mm]
{\it {\bf Keywords:} Bayesian theory; Dirichlet process; Infinite series; M\"{o}bius function; 
Riemann Hypothesis; Tests of series convergence.}
\end{abstract}

\tableofcontents

\pagebreak

\section{Introduction}
\label{sec:intro}

Determination of convergence, divergence or oscillation of infinite series has a very rich
tradition in mathematics, and a large number of tests exist for the purpose. Unfortunately, 
there does not seem to exist any universal test that provides conclusive answers to all infinite series; see, 
for example, \ctn{Ilyin82}, \ctn{Knopp90}, \ctn{Bou12}. Attempts to resolve the issue as much as possible
using hierarchies of tests, with the successive tests in the hierarchy providing conclusive answers
to successively larger ranges of infinite series, are provided by \ctn{Knopp90}, \ctn{Bromwich05}, 
\ctn{Bou11} and \ctn{Lif11}. These tests are based on the Kummer approach for positive series
and the chain of the Ermakov tests for positive monotone series. 
The hierarchy of tests provided in \ctn{Bou12} are based on \ctn{Bromwich05} and are related to the well-known Cauchy's test
(see, for example, \ctn{Fich70}, \ctn{Rudin76}, \ctn{Spivak94}).
Below we briefly discuss the approach of \ctn{Bou12}, who consider positive series. It is important to
remark at the outset that positive series is not a requirement for the approaches that we propose and develop
in this article. 

\subsection{Hierarchical tests of convergence}
\label{subsec:hierarchy}
The tests of \ctn{Bou12} are based on the following theorem, 
which is a refinement of a result of \ctn{Bromwich05}.
\begin{theorem}[\ctn{Bou12}]
Let $\sum_{i=1}^{\infty}F'(i)$ be a divergent series where $F(x)>0$, $F'(x)>0$ and $F'(x)$ is decreasing.
If $\sum_{i=1}^{\infty}X_i$ is a positive series, then denoting 
$\frac{\log\left\{\frac{F'(i)}{X_i}\right\}}{\log F(i)}=W_i$, the following hold:
\begin{align}
&\mbox{If}~\underset{i\rightarrow\infty}{\lim\inf}~W_i>1,~\mbox{then}~\sum_{i=1}^{\infty}X_i~\mbox{converges};\notag\\
&\mbox{If}~\underset{i\rightarrow\infty}{\lim\sup}~W_i<1,~\mbox{then}~\sum_{i=1}^{\infty}X_i~\mbox{diverges}.\notag
\end{align}
\end{theorem}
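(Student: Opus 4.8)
The plan is to read this as a comparison test resting on a single continuous analogue of the Abel--Dini theorem, namely: \emph{under the stated hypotheses on $F$, the series $\sum_i F'(i)F(i)^{-c}$ converges when $c>1$ and diverges when $c\le 1$.} Granting this auxiliary fact, both assertions follow quickly. Suppose first $\liminf_{i\to\infty} W_i>1$, and fix $c$ with $1<c<\liminf_{i\to\infty}W_i$, so that $W_i>c$ for all large $i$. Since $F'>0$, $F$ is increasing, and (as I argue below) $F(i)\to\infty$, we have $\log F(i)>0$ for all large $i$; multiplying $W_i>c$ through by $\log F(i)$ gives $\log\{F'(i)/X_i\}>c\log F(i)$, i.e. $X_i<F'(i)F(i)^{-c}$ eventually. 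The comparison test together with the auxiliary fact for $c>1$ then yields convergence of $\sum X_i$. Symmetrically, if $\limsup_{i\to\infty}W_i<1$, choose $c$ with $\max\{0,\limsup_{i\to\infty}W_i\}<c<1$; then $W_i<c$ eventually, hence $X_i>F'(i)F(i)^{-c}$ eventually, and the auxiliary fact for $c\le 1$ forces $\sum X_i$ to diverge.

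It remains to prove the auxiliary fact. First, because $F'$ is positive and decreasing, $\sum_{i=2}^{n}F'(i)\le\int_1^n F'(x)\,dx\le\sum_{i=1}^{n-1}F'(i)$, so divergence of $\sum F'(i)$ gives $\int_1^\infty F'(x)\,dx=\infty$; since $F(x)-F(1)=\int_1^x F'(t)\,dt$, this shows $F(x)\to\infty$, which in particular justifies $\log F(i)>0$ for large $i$ as used above. Next, for $c>0$ the function $x\mapsto F'(x)F(x)^{-c}$ is a product of two positive decreasing functions ($F'$ by hypothesis, $F^{-c}$ because $F$ is increasing), hence positive and decreasing on a half-line, so the integral test applies and $\sum_i F'(i)F(i)^{-c}$ behaves like $\int^{\infty}F'(x)F(x)^{-c}\,dx$. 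The substitution $u=F(x)$, $du=F'(x)\,dx$, converts this into $\int^{\infty}u^{-c}\,du$ with upper limit $F(x)\to\infty$, which converges if and only if $c>1$. This establishes the auxiliary fact and completes the proof.

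The steps are all short, so the only real care is book-keeping, and that is where I would expect the mild difficulty to lie. The key point is that one must know $\log F(i)>0$ eventually before multiplying the inequality relating $W_i$ and $c$ by $\log F(i)$, since otherwise its direction could flip; this is exactly why $F(x)\to\infty$ is needed, and that is precisely where the hypotheses that $\sum F'(i)$ diverges and that $F'$ is decreasing enter. One must also verify the monotonicity of $x\mapsto F'(x)F(x)^{-c}$ to legitimize the integral test (this needs $c>0$, which is ensured by the admissible choices of $c$ in both cases), and, for the divergence half, remember that $\sum F'(i)F(i)^{-c}$ diverges for \emph{every} $c\le 1$, not only $c=1$, so any admissible $c<1$ works. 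A minor technical remark is that a decreasing $F'$ is automatically Riemann integrable on bounded intervals, so the integral comparisons are valid even without assuming $F'$ continuous.
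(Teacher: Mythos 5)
Your proof is correct. The paper itself supplies no proof of this theorem --- it is quoted from \ctn{Bou12} (as a refinement of a result of Bromwich) and used only as the source of the hierarchy of tests --- so there is no in-paper argument to compare against. Your route, reducing both halves to the continuous Abel--Dini fact that $\sum_i F'(i)F(i)^{-c}$ converges precisely when $c>1$ and diverges when $0<c\le 1$, and then applying the comparison test, is the standard proof of such Kummer-type refinements. You have also attended to the two places where care is genuinely required: establishing $F(x)\to\infty$ from the divergence of $\sum F'(i)$ and the monotonicity of $F'$ (so that $\log F(i)>0$ eventually and the inequality $W_i\gtrless c$ can be multiplied through by $\log F(i)$ without reversing), and ensuring $c>0$ in the divergence half so that $x\mapsto F'(x)F(x)^{-c}$ is positive and decreasing and the integral test, via the substitution $u=F(x)$, is legitimate.
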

Letting $F(z)=z$ in the above theorem, \ctn{Bou12} obtain their first test, which we provide below. 
\begin{theorem}[Test $T_1$ of \ctn{Bou12}]
Consider a positive series $\sum_{i=1}^{\infty}X_i$ and let $T_{1,i}=\frac{i}{\log i}\left(1-X^{\frac{1}{i}}_i\right)$.
Then
\begin{align}
&\mbox{If}~\underset{i\rightarrow\infty}{\lim\inf}~T_{1,i}>1,~\mbox{then}~\sum_{i=1}^{\infty}X_i~\mbox{converges};\notag\\
&\mbox{If}~\underset{i\rightarrow\infty}{\lim\sup}~T_{1,i}<1,~\mbox{then}~\sum_{i=1}^{\infty}X_i~\mbox{diverges}.\notag
\end{align}
\end{theorem}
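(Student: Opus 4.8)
The plan is to read off Test $T_1$ from the refinement of \ctn{Bromwich05}'s result stated above by specializing to $F(z)=z$; the only substantive step is to convert the condition on the sequence $W_i$ occurring there into the stated condition on $T_{1,i}$.

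First I would check admissibility of $F(z)=z$: on $[1,\infty)$ one has $F(x)=x>0$, $F'(x)=1>0$, the derivative $F'$ is (weakly) decreasing, and $\sum_{i\ge1}F'(i)=\sum_{i\ge1}1$ diverges. For this $F$, $W_i=\log\{F'(i)/X_i\}/\log F(i)=-\log X_i/\log i$ for $i\ge 2$. Introducing $y_i=X_i^{1/i}>0$, so that $\log X_i=i\log y_i$, the two relevant quantities become $W_i=-i\log y_i/\log i$ and $T_{1,i}=i(1-y_i)/\log i$, and the link between them is the chain of elementary inequalities
\[
1-y\ \le\ -\log y\ \le\ \frac{1-y}{y}\qquad (y>0),
\]
equivalently $1-\tfrac1y\le\log y\le y-1$.

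For the convergence half: multiplying the left inequality (at $y=y_i$) by $i/\log i>0$ gives $T_{1,i}\le W_i$ for every $i\ge2$, hence $\liminf_i W_i\ge\liminf_i T_{1,i}>1$, and that theorem gives convergence of $\sum X_i$. For the divergence half: assume $\limsup_i T_{1,i}<1$ and choose $\e\in(0,1)$ with $T_{1,i}\le 1-\e$ for all large $i$. If $T_{1,i}\le 0$ (which happens precisely when $y_i\ge 1$) then $W_i\le 0$; otherwise $0<T_{1,i}\le 1-\e$, which forces $y_i\ge 1-(1-\e)(\log i)/i$, and the right inequality above gives
\[
W_i\ \le\ \frac{i(1-y_i)}{y_i\log i}\ =\ \frac{T_{1,i}}{y_i}\ \le\ \frac{1-\e}{\,1-(1-\e)(\log i)/i\,}.
\]
Since $(\log i)/i\to 0$, the right-hand side tends to $1-\e$, so in either case $\limsup_i W_i\le 1-\e<1$, and that theorem gives divergence of $\sum X_i$.

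The step I expect to be most delicate is the divergence half: one must note that $T_{1,i}$ (and $W_i$) become negative exactly when $X_i\ge 1$, so a case distinction on the sign is forced, although it is harmless since a negative value is automatically below the threshold; and one must use $(\log i)/i\to0$ to absorb the factor $1/y_i$ in $W_i\le T_{1,i}/y_i$ without degrading the constant $1-\e$. The convergence half, by contrast, follows at once from the single inequality $W_i\ge T_{1,i}$.
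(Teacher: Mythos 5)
Your proposal is correct and follows exactly the route the paper indicates: the paper itself gives no proof of Test $T_1$, merely noting that it is obtained from the master theorem of \ctn{Bou12} by setting $F(z)=z$, and your argument supplies the one missing ingredient — the comparison $T_{1,i}\le W_i\le T_{1,i}/y_i$ coming from $1-y\le-\log y\le(1-y)/y$ — together with a careful treatment of the divergence half. Both halves check out (including the sign case distinction and the absorption of the factor $1/y_i$ via $(\log i)/i\to 0$), so this is a faithful and complete filling-in of the specialization the paper cites.
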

This result is the same as that of \ctn{Bromwich05}, but a proof was not supplied in that work. 

Now choosing $F(z)=\log z$, \ctn{Bou12} form their second test of the hierarchy; we provide the result below.
Again, the result has been formulated by \ctn{Bromwich05}, but a proof was not given.
\begin{theorem}[Test $T_2$ of \ctn{Bou12}]
Consider a positive series $\sum_{i=1}^{\infty}X_i$ and let $T_{2,i}=\frac{\log i}{\log\log i}\left(T_{1,i}-1\right)$.
Then
\begin{align}
&\mbox{If}~\underset{i\rightarrow\infty}{\lim\inf}~T_{2,i}>1,~\mbox{then}~\sum_{i=1}^{\infty}X_i~\mbox{converges};\notag\\
&\mbox{If}~\underset{i\rightarrow\infty}{\lim\sup}~T_{2,i}<1,~\mbox{then}~\sum_{i=1}^{\infty}X_i~\mbox{diverges}.\notag
\end{align}
\end{theorem}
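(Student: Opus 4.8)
The plan is to derive Test $T_2$ from the general refinement of Bromwich's theorem stated above, applied with the choice $F(z)=\log z$, and then to reconcile the quantity that theorem produces with $T_{2,i}$. First I would check that $F(z)=\log z$ meets the hypotheses: for $z>e$ we have $F(z)>0$, $F'(z)=1/z>0$ is decreasing, and $\sum_i F'(i)=\sum_i 1/i$ diverges. The theorem then asserts that $\liminf_i W_i>1$ forces convergence and $\limsup_i W_i<1$ forces divergence, where here
\[
W_i=\frac{\log\bigl(F'(i)/X_i\bigr)}{\log F(i)}=\frac{-\log i-\log X_i}{\log\log i}.
\]
So it is enough to compare $W_i$ with $T_{2,i}$ at the level of $\liminf$ and $\limsup$.

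The key algebraic observation is an exact identity. Write $t_i=-\tfrac1i\log X_i$, so that $X_i^{1/i}=e^{-t_i}$ and $-\log X_i=i t_i$. Unwinding the definitions $T_{1,i}=\frac{i}{\log i}\bigl(1-X_i^{1/i}\bigr)$ and $T_{2,i}=\frac{\log i}{\log\log i}(T_{1,i}-1)$ gives $T_{2,i}=\bigl(i(1-X_i^{1/i})-\log i\bigr)/\log\log i$, hence for $i\ge 3$
\[
W_i-T_{2,i}=\frac{i\bigl(t_i-1+e^{-t_i}\bigr)}{\log\log i}=\frac{i\,g(t_i)}{\log\log i},\qquad g(t):=t-1+e^{-t}.
\]
Since $g(t)\ge 0$ for all real $t$ (with $g(0)=0$ the global minimum, as $g'(t)=1-e^{-t}$), this already yields $T_{2,i}\le W_i$ for all large $i$, which settles the convergence half with no extra work: if $\liminf_i T_{2,i}>1$ then $\liminf_i W_i\ge\liminf_i T_{2,i}>1$, and the general theorem gives convergence.

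For the divergence half the task is to show the gap $W_i-T_{2,i}=i\,g(t_i)/\log\log i$ is $o(1)$ whenever $\limsup_i T_{2,i}<1$. I would first extract an a priori bound on $X_i$: from $T_{2,i}\le M$ (any finite upper bound, in particular one supplied by $\limsup_i T_{2,i}<1$) we get $i(1-X_i^{1/i})\le\log i+M\log\log i$, hence $X_i^{1/i}\ge 1-\tfrac{\log i+M\log\log i}{i}$, and therefore, using $-\log(1-x)\le x+x^2$ for $x\le\tfrac12$,
\[
-\log X_i\le -i\log\!\Bigl(1-\tfrac{\log i+M\log\log i}{i}\Bigr)=(\log i)\bigl(1+o(1)\bigr).
\]
Consequently $t_i=O(\log i/i)\to 0$; since $g(t)\le t^2/2$ for $t\ge 0$, for $i$ large
\[
0\le W_i-T_{2,i}=\frac{i\,g(t_i)}{\log\log i}\le\frac{i\,t_i^2}{\log\log i}=\frac{(\log X_i)^2}{i\log\log i}=O\!\left(\frac{(\log i)^2}{i\log\log i}\right)\longrightarrow 0.
\]
Thus $\limsup_i W_i=\limsup_i T_{2,i}<1$, and the general theorem gives divergence. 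The main obstacle is exactly this last step, namely the linearization of $1-X_i^{1/i}$ about $-\tfrac1i\log X_i$, which is legitimate only after the a priori control $t_i\to 0$ has been secured; without that bound the correction term $i\,g(t_i)$ need not be negligible, and everything else is bookkeeping. (An alternative route avoiding the general theorem: $\liminf_i T_{2,i}>1$ forces $X_i\lesssim\bigl(i(\log i)^{1+\delta}\bigr)^{-1}$ while $\limsup_i T_{2,i}<1$ forces $X_i\gtrsim\bigl(i\log i\bigr)^{-1}$ for a suitable $\delta>0$, after which one compares directly with the Bertrand series $\sum_i\bigl(i(\log i)^{p}\bigr)^{-1}$; the core estimate, the bound on $(1-a_i/i)^i$ with $a_i=\log i+O(\log\log i)$, is the same.)
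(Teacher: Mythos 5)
The paper does not actually prove this statement: it records it as Test $T_2$ of \ctn{Bou12} (originally formulated without proof by \ctn{Bromwich05}) and only indicates that it follows from the general refinement theorem by ``choosing $F(z)=\log z$.'' Your proposal fills in exactly that derivation, so at the level of strategy you are on the route the paper points to, and most of your execution is sound. The verification of the hypotheses for $F(z)=\log z$, the exact identity $W_i-T_{2,i}=i\,g(t_i)/\log\log i$ with $g(t)=t-1+e^{-t}\ge 0$, and hence the one-line convergence half ($T_{2,i}\le W_i$ for $i\ge 3$, so $\liminf W_i\ge\liminf T_{2,i}>1$) are all correct and clean.

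There is one genuine gap, in the divergence half, at the step ``consequently $t_i=O(\log i/i)\to 0$.'' The a priori bound you extract from $T_{2,i}\le M$ is a lower bound on $X_i^{1/i}$, hence only an \emph{upper} bound $t_i\le(\log i/i)(1+o(1))$; it gives no lower bound on $t_i$. If $X_i>1$ on a subsequence then $t_i<0$ there, it need not tend to $0$ (e.g.\ $X_i=e^i$ gives $t_i=-1$), the inequality $g(t)\le t^2/2$ fails for $t<0$, and $i\,g(t_i)/\log\log i$ can blow up, so the claimed conclusion $W_i-T_{2,i}=o(1)$ is false in general under the sole hypothesis $\limsup T_{2,i}<1$. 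The repair is easy and should be stated: if $X_i\ge 1$ for infinitely many $i$, the positive series diverges outright because its terms do not tend to zero; otherwise $X_i<1$ eventually, so $t_i\ge 0$, the upper bound forces $t_i\to 0$, and your estimate $0\le W_i-T_{2,i}\le i t_i^2/\log\log i=O\bigl((\log i)^2/(i\log\log i)\bigr)\to 0$ goes through, giving $\limsup W_i=\limsup T_{2,i}<1$ and divergence from the general theorem. With that one case added, the proof is complete.
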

Setting $F(z)=\log\log z$, the following result has been proved by \ctn{Bou12}:
\begin{theorem}[Test $T_3$ of \ctn{Bou12}]
Consider a positive series $\sum_{i=1}^{\infty}X_i$ and let $T_{3,i}=\frac{\log i}{\log\log i}\left(T_{2,i}-1\right)$.
Then
\begin{align}
&\mbox{If}~\underset{i\rightarrow\infty}{\lim\inf}~T_{3,i}>1,~\mbox{then}~\sum_{i=1}^{\infty}X_i~\mbox{converges};\notag\\
&\mbox{If}~\underset{i\rightarrow\infty}{\lim\sup}~T_{3,i}<1,~\mbox{then}~\sum_{i=1}^{\infty}X_i~\mbox{diverges}.\notag
\end{align}
\end{theorem}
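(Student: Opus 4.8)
The plan is to obtain Test $T_3$ as the special case $F(z)=\log\log z$ of the master theorem stated above (the refinement of Bromwich's result due to \ctn{Bou12}), in exactly the same way that $T_1$ and $T_2$ arose from $F(z)=z$ and $F(z)=\log z$. First I would check that this choice of $F$ meets the structural hypotheses of that theorem for all sufficiently large $z$: indeed $F(z)=\log\log z>0$ once $z>e$, $F'(z)=\tfrac{1}{z\log z}>0$, and $F'$ is decreasing because $z\mapsto z\log z$ is increasing; finally $\sum_i F'(i)=\sum_i\tfrac{1}{i\log i}$ diverges, for instance by Cauchy condensation (or the integral test). Since the conclusions of the master theorem are phrased only through $\liminf$ and $\limsup$, it is immaterial that these properties hold merely for large $i$: discarding finitely many terms changes neither convergence, divergence, nor the limit inferior/superior, and for large $i$ all of $\log i$, $\log\log i$, $\log\log\log i$ are defined and positive.

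With $F(z)=\log\log z$ the quantity in the master theorem becomes $W_i=\dfrac{\log\{F'(i)/X_i\}}{\log F(i)}=\dfrac{-\log\!\left(i\,(\log i)\,X_i\right)}{\log\log\log i}$, and the substance of the argument is to show that the dichotomy expressed through $W_i$ is equivalent to the one expressed through $T_{3,i}$, i.e.\ that $\liminf_i W_i>1\iff\liminf_i T_{3,i}>1$ and $\limsup_i W_i<1\iff\limsup_i T_{3,i}<1$. To establish this I would unwind the nested definitions $T_{3,i}=\tfrac{\log i}{\log\log i}(T_{2,i}-1)$, $T_{2,i}=\tfrac{\log i}{\log\log i}(T_{1,i}-1)$ and $T_{1,i}=\tfrac{i}{\log i}\bigl(1-X_i^{1/i}\bigr)$, so as to solve for $\log X_i$ (equivalently for $X_i^{1/i}=\exp(\tfrac1i\log X_i)$) as an explicit function of $T_{3,i}$ and the iterated logarithms. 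Plugging this back into the displayed formula for $W_i$ and expanding $e^{-u}=1-u+O(u^2)$ and $\log(1+v)=v+O(v^2)$ for the small quantities that appear, one finds that $W_i$ and $T_{3,i}$ are asymptotically equal — in fact $W_i=T_{3,i}+o(1)$ — on the only regime that matters, namely where $T_{3,i}$ stays bounded; when instead $T_{3,i}\to+\infty$ (resp.\ $-\infty$) the same expansion forces $W_i\to+\infty$ (resp.\ $-\infty$), so the two criteria agree there as well. Combining this with the master theorem yields both assertions of Test $T_3$.

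The step I expect to be the main obstacle is precisely this last asymptotic bookkeeping: inverting the three-fold nesting produces a telescoping product of ratios of successive iterated logarithms, and one must verify that every lower-order term generated by the expansions of $\exp$ and $\log$ — in particular those of order $O\!\bigl(\tfrac{\log\log\log i}{\log\log i}\bigr)$ and smaller — is genuinely negligible against the leading term, and uniformly enough that the comparison with the constant $1$ survives in the $\liminf$/$\limsup$. A clean way to organize this, and the one I would adopt, is to prove by induction on $k$ that $W_i^{(k)}=T_{k,i}+o(1)$ whenever $T_{k,i}$ is bounded (and that both diverge together otherwise), where $W_i^{(k)}$ is the $W_i$ attached to the $k$-fold iterated logarithm; the base cases $k=1,2$ reproduce $T_1$ and $T_2$, and the step from $k=2$ to $k=3$ is exactly what is required here, with the inductive mechanism pinpointing which remainder terms must be controlled.
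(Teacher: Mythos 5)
Your overall strategy --- specialize the master theorem to $F(z)=\log\log z$, check its hypotheses, and then identify the resulting $W_i$ asymptotically with $T_{3,i}$ --- is exactly the route the paper takes (the paper in fact offers nothing beyond the sentence ``Setting $F(z)=\log\log z$, the following result has been proved by \ctn{Bou12}''), and your verification of the hypotheses on $F$ is fine. The problem is the step you yourself flag as the main obstacle: the claimed identity $W_i=T_{3,i}+o(1)$ is false for the statement as written. Carrying out the unwinding you describe, in the regime where $T_{1,i}$ stays bounded one has $\log X_i=-T_{1,i}\log i+O\bigl(\tfrac{\log^2 i}{i}\bigr)$, hence with $F'(i)=\tfrac{1}{i\log i}$,
\[
\log\frac{F'(i)}{X_i}=(T_{1,i}-1)\log i-\log\log i+O\Bigl(\tfrac{\log^2 i}{i}\Bigr)
=(T_{2,i}-1)\log\log i+O\Bigl(\tfrac{\log^2 i}{i}\Bigr),
\]
where the second equality uses $T_{1,i}-1=\tfrac{\log\log i}{\log i}\,T_{2,i}$. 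Dividing by $\log F(i)=\log\log\log i$ gives
\[
W_i=\frac{\log\log i}{\log\log\log i}\bigl(T_{2,i}-1\bigr)+o(1).
\]
So the quantity that the master theorem compares with $1$ is $\tfrac{\log\log i}{\log\log\log i}(T_{2,i}-1)$, whereas the theorem's $T_{3,i}$ is $\tfrac{\log i}{\log\log i}(T_{2,i}-1)$; their ratio is $\tfrac{(\log\log i)^2}{\log i\,\log\log\log i}\rightarrow 0$. Consequently $W_i\rightarrow 0$ whenever $T_{3,i}$ is bounded, and the equivalence $\liminf W_i>1\iff\liminf T_{3,i}>1$ that your plan rests on simply does not hold; no amount of careful bookkeeping of the remainder terms repairs this, because the mismatch is in the leading normalization, not in the error terms.

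What this reveals is that the printed statement is almost certainly a typo for $T_{3,i}=\tfrac{\log\log i}{\log\log\log i}(T_{2,i}-1)$ --- the natural continuation of the hierarchy, and the definition your own computation produces; with that definition your argument, and the inductive scheme $W^{(k)}_i=T_{k,i}+o(1)$ you propose, goes through. But as a proof of the displayed theorem your attempt has a genuine gap, and the theorem itself is suspect: the convergence half as printed asserts that $T_{2,i}-1>\tfrac{\log\log i}{\log i}(1+\delta)$ eventually forces convergence, which is strictly weaker than the correct threshold $T_{2,i}-1>\tfrac{\log\log\log i}{\log\log i}(1+\delta)$, and is contradicted by any series with, say, $T_{2,i}-1=\tfrac{2\log\log i}{\log i}$, for which the printed $T_{3,i}$ equals $2$ while the correctly normalized third statistic tends to $0$ and certifies divergence. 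You should either prove the corrected statement or flag that the printed normalization cannot be obtained from $F(z)=\log\log z$.
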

Successively selecting $F(z)=\log\log\log z$, $F(z)=\log\log\log\log z$, etc. successively more refined tests
$T_4$, $T_5$, etc. can be constructed, with each test having wider scope compared to the preceding test with regard 
to obtaining conclusive decision on convergence or divergence of the underlying series.  

However, if, say, at stage $k$, $\underset{i\rightarrow\infty}{\lim\inf}~T_{k,i}<1<
\underset{i\rightarrow\infty}{\lim\sup}~T_{k,i}$ so that $T_k$ is inconclusive, then all the subsequent tests 
will also fail to provide any conclusion.
Thus, in spite of the above developments, conclusion regarding the series can still be elusive. For instance, an example
considered in \ctn{Bou12} is the following series:
\begin{equation}
S_1=\sum_{i=3}^{\infty}\left(1-\frac{\log i}{i}-\frac{\log\log i}{i}
\left\{\cos^2\left(\frac{1}{i}\right)\right\}\left(a+(-1)^ib\right)\right)^i,
\label{eq:inconclusive1}
\end{equation}
where $a\geq 0$ and $b\geq 0$. For $a=b=1$, $\underset{i\rightarrow\infty}{\lim\inf}~T_{2,i}=0<1<2=
\underset{i\rightarrow\infty}{\lim\sup}~T_{2,i}$. Hence, the hierarchy of tests 
$\left\{T_k;k\geq 1\right\}$ fails to provide definitive answer to the question of convergence of the above series. 

In fact, we can generalize the series (\ref{eq:inconclusive1}) such that the hierarchy of tests fails for the general class
of series. Indeed, consider
\begin{equation}
S_2=\sum_{i=3}^{\infty}\left(1-\frac{\log i}{i}-\frac{\log\log i}{i}
f(i)\left(a+(-1)^ib\right)\right)^i,
\label{eq:inconclusive2}
\end{equation}
where $0\leq f(i)\leq 1$ for all $i=1,2,3,\ldots$, and $f(i)\rightarrow 1$ as $i\rightarrow\infty$.
Such a function can be easily constructed as follows. Let $g(i)$ be positive and monotonically increase
to $c$, where $c>0$. Then let $f(i)=g(i)/c$, for $i=1,2,3,\ldots$. A simple example of such a function
$g$ is $g(i)=c-\frac{1}{i}$; $g(i)=\cos^2\left(\frac{1}{i}\right)$ is another example, showing the generality
of (\ref{eq:inconclusive2}) compared to (\ref{eq:inconclusive1}).

\subsection{Riemann Hypothesis and series convergence}
\label{subsec:RH_series}
It is well-known that the famous Riemann Hypothesis is equivalent to convergence of an infinite
series on a certain interval. A brief introduction to the problem, along with
the necessary background, is provided in Section \ref{sec:RH}. Studying the relevant infinite
series, if at all possible, is then the most challenging problem of mathematics. The existing
mathematical literature, however, does not seem to be able to provide any directions in this regard.  
Hence, innovative theories and methods for analyzing infinite series should be particularly welcome.

Note that direct and successive evaluation of sums of consecutive terms of the deterministic series of interest need not even provide any insight into the
convergence behaviour of the series. This is because if the said sum seems to have approximately stabilized after a large number of successive evaluations, a further large number
of evaluations may reveal a slow increase of the sums. On the other hand, even though initially the sums might exhibit an increasing nature, eventually they might stabilize. 
To combat such problems, it would be worthwhile to create some appropriate transformation of the sums such that convergence of the series may be indicated if the transformed
sums approach a certain pre-defined value (say, 1), and divergence would be anticipated if the transformed sums approach another pre-defined value (say, 0), in a large number
of evaluations. Although these two pre-defined values and the progress of the transformed sums towards these values in a large, but finite number of evaluations do not, in any way, 
formally settle the question of convergence of the underlying series, strong evidence regarding the convergence behaviour may be gained, when the number of evaluations is considerably large.

In this paper, our approach of characterization of convergence properties of infinite series is based on the aforementioned intuition, which we formalize rigorously through 
a novel Bayesian procedure. We subsequently extend the idea and formalism to infinite series with multiple or even infinite
number of limit points. The main motivation and the idea of Bayesian formalism is illustrated in Section \ref{sec:key_concept}. 

\section{The key concept}
\label{sec:key_concept}

Let us assume that the terms $\left\{x_1,x_2,\ldots\right\}$ of any deterministic infinite series of the form $\sum_{i=1}^{\infty}x_i$ of interest is a realization
of some stochastic process $\left\{X_i:i=1,2,\ldots\right\}$, so that $\sum_{i=1}^{\infty}x_i$ is a realization of the corresponding  
random infinite series 
\begin{equation}
S_{1,\infty}=\sum_{i=1}^{\infty}X_i.
\label{eq:S}
\end{equation}
In the above, we do not assume any distributional form for $\left\{X_i:i=1,2,\ldots\right\}$, signifying the nonparametric nature of our problem.
Let $p\in[0,1]$ denote the probability of convergence the sum $S_{1,\infty}$. 
In particular, if $\left\{X_i:i=1,2,\ldots\right\}$ are independent,
then by Kolmogorov's 0-1 law (see, for example, \ctn{Stroock99}), $p$ is either $0$ or $1$, where $0$ 
stands for divergence of almost all realizations of $S_{1,\infty}$ and $1$ is associated with convergence of almost all realizations of $S_{1,\infty}$. 
Kolmogorov's three series theorem (see, for example, \ctn{Stroock99}) helps determine in this case if $p=0$ or $p=1$. However, the three series theorem requires parametric 
specification of the distributions of
$\left\{X_i:i=1,2,\ldots\right\}$, and specific choices of the parameters determine if $p=0$ or $p=1$. 
Since our goal is to determine the convergence behaviour of the deterministic series $\sum_{i=1}^{\infty}x_i$, interpreted as a realization of the specified stochastic process,
different choices of the parameters would lead to convergence and divergence of the same series, along with almost all other realizations of the stochastic process. In other words,
Kolmogorov's three series theorem is inappropriate when it comes to determination of convergence behaviour of deterministic series. 

If the random variables are not independent, then it may happen that some of the realizations of $S_{1,\infty}$ are convergent, some are divergent and the rest are oscillatory.
Since the above argument regarding Kolmogorov's three series theorem 
shows that it is inappropriate to assume parametric forms of the distributions of the random variables, we do not assume any particular distributional form of $\left\{X_i:i=1,2,\ldots\right\}$. 
It then follows that the value of $p$ is unknown, so that from the Bayesian perspective, one must acknowledge uncertainty about $p$ in the form of some appropriate prior.   

Now, specifying a prior directly on $p$ associated with the entire infinite series and computing the posterior given $\left\{X_i:i=1,2,\ldots\right\}$, is not a valid proposition, as
computing the likelihood would require evaluation of infinite number of terms associated with the infinite series, which amounts to knowing the convergence behaviour of the
series of interest. 
Instead, it makes sense to specify priors on the probabilities associated with the finite partial sums of the form 
$\sum_{i=m}^nX_i$, for $m\leq n$. Indeed, let
$$
P\left(\left|\sum_{i=m}^nX_i\right|\leq c_{m,n}\right)=p_{m,n},$$ 
where 
$c_{m,n}$ are non-negative quantities satisfying $c_{m,n}\downarrow 0$ as $m,n\rightarrow\infty$. 
Thus, the probability depends on how large $m$ and $n$ are. 

Now note that, as $m,n\rightarrow\infty$, 
$$\mathbb I_{\left\{\left|\sum_{i=m}^nX_i\right|\leq c_{m,n}\right\}}
\rightarrow \mathbb I_{\left\{\underset{m,n\rightarrow\infty}{\lim}~\left|\sum_{i=m}^nX_i\right|=0\right\}}$$
almost surely, so that uniform integrability leads to 
\begin{align}
\underset{m,n\rightarrow\infty}{\lim}~p_{m,n}
&=\underset{m,n\rightarrow\infty}{\lim}~P\left(\left|\sum_{i=m}^nX_i\right|\leq c_{m,n}\right)\notag\\
&=\underset{m,n\rightarrow\infty}{\lim}~E\left(\mathbb I_{\left\{\left|\sum_{i=m}^nX_i\right|\leq c_{m,n}\right\}}\right)
=E\left(\mathbb I_{\left\{\underset{m,n\rightarrow\infty}{\lim}~\left|\sum_{i=m}^nX_i\right|=0\right\}}\right)\notag\\
&=P\left(\underset{m,n\rightarrow\infty}{\lim}~\left|\sum_{i=m}^nX_i\right|=0\right)
=\underset{m,n\rightarrow\infty}{\lim}~p_{m,n}
=p,
\end{align}
so that it is sufficient to deal with $p_{m,n}$ associated with the partial sums rather than $p$. It is only required to ensure that the priors on $p_{m,n}$ are built such that 
given any realization $\left\{x_i:i=1,2,\ldots\right\}$ of the stochastic process $\left\{X_i:i=1,2,\ldots\right\}$ associated with the corresponding series of interest $\sum_{i=1}^{\infty}x_i$, 
the posterior corresponding to the prior of $p_{m,n}$, which we denote by 
$\pi_{m,n}\left(\cdot\Big |\left|\sum_{i=m}^nx_i\right|\right)$, converges
to $\pi\left(\cdot\Big |\underset{m,n\rightarrow\infty}{\lim}~\left|\sum_{i=m}^nx_i\right|\right)$, the posterior corresponding to the prior of $p$. 
Since the latter posterior is based on some given, single realization of the underlying stochastic process, the overall probability of convergence $p$ is informed with respect
to the conditioned single realization only. Consequently, the overall probability of convergence, given the series of interest, admits interpretation as the probability of convergence
of the series of interest. Hence, it is reasonable to require that, 
$\pi\left(\cdot\Big|\underset{m,n\rightarrow\infty}{\lim}~\left|\sum_{i=m}^nx_i\right|\right)=\delta_{\left\{z\right\}}(\cdot)$, the point mass at $z$, 
where $z=1$ or $z=0$ accordingly as $\underset{m,n\rightarrow\infty}{\lim}~\left|\sum_{i=m}^nx_i\right|$ is zero or positive, that is, 
accordingly as $\sum_{i=1}^{\infty}x_i$ is convergent or divergent. Thus, it is required to construct the priors on $p_{m,n}$ such that
$\pi_{m,n}\left(\cdot\Big |\left|\sum_{i=m}^nx_i\right|\right)\rightarrow\delta_{\left\{z\right\}}(\cdot)$ in some appropriate sense, as $m,n\rightarrow\infty$,
for any realization of the stochastic process.

It is important to appreciate that for another realization $\left\{\tilde x_i:i=1,2,\ldots\right\}$ of the underlying stochastic process, the corresponding infinite sum
$\sum_{i=1}^{\infty}\tilde x_i$ may have different convergence behaviour than $\sum_{i=1}^{\infty}x_i$. For instance, $\sum_{i=1}^{\infty}\tilde x_i$ may be divergent while 
$\sum_{i=1}^{\infty}x_i$ may be convergent.
Hence, the corresponding posteriors based on the partial sums of $\sum_{i=1}^{\infty}\tilde x_i$ will converge to $0$, while those associated with 
$\sum_{i=1}^{\infty}x_i$ will converge to $1$. Since
$p$ is the probability that $S_{1,\infty}$ converges, at first glance such discrepant posteriors may create the impression that the Bayesian inference procedure regarding $p$ is inconsistent.
However, as discussed above, given only the series of interest, the overall probability of convergence $p$ admits interpretability as the probability of convergence of the series at hand. 
This is exactly what is desired, since our goal is to study the convergence properties of the series of our interest only, not to learn about $p$. As an aside, note that it is of course possible 
to learn about $p$ via its posterior distribution which may be obtained by conditioning on adequate number of realizations (instead of
a single realization) of the stochastic process as in the usual Bayesian inference problems of learning about unknown parameters. 


In Section \ref{sec:recursive1} we devise a recursive Bayesian methodology that achieves the goal discussed above.
It is important to remark that no restrictive assumption is necessary for the development of our ideas, 
not even independence of $X_i$.
With this methodology, we then characterize convergence and divergence of infinite series 
in Section \ref{sec:characterization}, illustrating in Section \ref{sec:illustrations} our theory and methods with seven
examples. In Section \ref{sec:RH} we apply our ideas to Riemann Hypothesis, obtaining results that 
are not in complete favour of the conjecture. 
We also extend our theory and methods to infinite series with multiple or infinite number
of limit points; details are provided in Section S-3 of the supplement. 
Illustrations of our Bayesian multiple limit point theory are provided in Sections S-4 and S-5
of the supplement, 
the latter section detailing the application to Riemann
Hypothesis in order to vindicate our results obtained in Section \ref{sec:RH}. Finally, we make
concluding remarks in Section \ref{sec:conclusion}.

\section{A recursive Bayesian procedure for studying infinite series}
\label{sec:recursive1}

Since we view $X_i$ as realizations from some random process, we first formalize the notion
in terms of the relevant probability space.
Let $(\Omega,\mathcal A,\mu)$ be a probability space, where $\Omega$ is the sample space,
$\mathcal A$ is the Borel $\sigma$-field on $\Omega$, and $\mu$ is some probability measure.
Let, for $i=1,2,3,\ldots$, $X_i:\Omega\mapsto\mathbb R$ be real valued random variables
measurable with respect to the Borel $\sigma$-field $\mathcal B$ on $\mathbb R$.
As in \ctn{Schervish95}, we can then define a $\sigma$-field of subsets of $\mathbb R^{\infty}$ with
respect to which $X=(X_1,X_2,\ldots)$ is measurable. Indeed, let us define $\mathbb B^{\infty}$ to be
the smallest $\sigma$-field containing sets of the form 
\begin{align}
B&=\left\{X:X_{i_1}\leq r_1,X_{i_2}\leq r_2,\ldots,X_{i_p}\leq r_p,~\mbox{for some}~p\geq 1,\right.\notag\\
&\quad\quad\left.~\mbox{some integers}~
i_1,i_2,\ldots,i_p,~\mbox{and some real numbers}~r_1,r_2,\ldots,r_p\right\}.\notag
\end{align}
Since $B$ is an intersection of finite number of sets  
of the form $\left\{X:X_{i_j}\leq r_j\right\}$; $j=1,\ldots,p$, all of which belong to $\mathcal A$ (since
$X_{i_j}$ are measurable)
it follows that $X^{-1}(B)\in\mathcal A$, so that $X$ is measurable with respect to 
$(\mathbb R^{\infty},\mathbb B^{\infty},P)$, where $P$ is the probability measure induced by $\mu$.

Alternatively, note that it is possible to represent any stochastic process $\{X_i:i\in \mathfrak I\}$, for fixed
$i$ as a random variable $\omega\mapsto X_i(\omega)$, where $\omega\in\Omega$;
$\Omega$ being the set of all functions from $\mathfrak I$ into $\mathbb R$. 
Also, fixing $\omega\in\Omega$, the function $i\mapsto X_i(\omega);~i\in \mathfrak I$,
represents a path of $X_i;~i\in\mathfrak I$. Indeed, we can identify $\omega$ with the function
$i\mapsto X_i(\omega)$ from $\mathfrak I$ to $\mathbb R$; see, for example, \ctn{Oksendal00}, for
a lucid discussion.

This latter identification will be convenient for our purpose, and we adopt this in this article.
Note that the $\sigma$-algebra $\mathcal F$ induced by $X$
is generated by sets of the form
\[
\left\{\omega:\omega(i_1)\in B_1,\omega(i_2)\in B_2,\ldots,\omega(i_k)\in B_k\right\},
\]
where $B_j\subset\mathbb R;~j=1,\ldots,k$, are Borel sets in $\mathbb R$.   

\subsection{Development of the stage-wise likelihoods}
\label{subsec:Bayesian_method}

For $j=1,2,3,\ldots$, let
\begin{equation}
S_{j,n_j}=\sum_{i=\sum_{k=0}^{j-1}n_k+1}^{\sum_{k=0}^jn_k}X_i,
\label{eq:S_j_n}
\end{equation}
where $n_0=0$ and $n_j\geq 1$ for all $j\geq 1$.
Also let $\{c_j\}_{j=1}^{\infty}$ be a non-negative decreasing sequence and
\begin{equation}
Y_{j,n_j}=\mathbb I_{\left\{\left|S_{j,n_j}\right|\leq c_j\right\}}.
\label{eq:Y_j_n}
\end{equation}
Let, for $j\geq 1$,
\begin{equation}
P\left(Y_{j,n_j}=1\right)=p_{j,n_j}.
\label{eq:p_j_n}
\end{equation}
Hence, the likelihood of $p_{j,n_j}$, given $y_{j,n_j}$, is given by
\begin{equation}
L\left(p_{j,n_j}\right)=p^{y_{j,n_j}}_{j,n_j}\left(1-p_{j,n_j}\right)^{1-y_{j,n_j}}
\label{eq:likelihood}
\end{equation}
It is important to relate $p_{j,n_j}$ to convergence or divergence of the underlying series.
Note that $p_{j,n_j}$ is the probability that $|S_{j,n_j}|$ falls below $c_j$. Thus, 
$p_{j,n_j}$ can be interpreted as the probability that the
series $S_{1,\infty}$ is convergent when the data observed is $S_{j,n_j}$. 
If $S_{1,\infty}$ is convergent, then it is to be expected {\it a posteriori}, that  
\begin{equation}
p_{j,n_j}\rightarrow 1\quad\mbox{as}~j\rightarrow\infty.
\label{eq:convergent_p}
\end{equation}
Note that the above is expected to hold even for $n_j=n$ for all $j\geq 1$, and for all $n\geq 1$. This is related
to Cauchy's criterion of convergence of partial sums: for every $\epsilon>0$ there exists a positive
integer $N$ such that for all $n\geq m\geq N$, $|\sum_{i=m}^nX_i|<\epsilon$. 
Indeed, as we will formally show, condition (\ref{eq:convergent_p}) 
is both necessary and sufficient for convergence of the series. 

On the other hand, if the series is divergent, then there exist $j_0\geq 1$ 
such that for every $j>j_0$ there exists $n_j\geq 1$ satisfying $|S_{j,n_j}|>c_j$. Here we expect, 
{\it a posteriori}, that
\begin{equation}
p_{j,n_j}\rightarrow 0\quad\mbox{as}~j\rightarrow\infty.
\label{eq:divergent_p}
\end{equation}
Again, we will prove formally that the above condition is both necessary and sufficient for divergence.


In this work we call the series $S_{1,\infty}$ oscillating if the sequence 
$\left\{S_{1,n};~n=1,2,\ldots\right\}$ has more than one limit points.
Thus, these are non-convergent series, and so, the probability of convergence of these series
must tend to zero in our Bayesian framework, which is in fact ensured by our theoretical developments.
But it is also important to be able to categorize and learn about the limit points. 
A general theory, which encompasses finite as well as infinite number of limit points,
with perhaps unequal frequencies of occurrences, is developed in Section S-3 of the supplement. 

In what follows we shall first construct a recursive Bayesian methodology that formally characterizes
convergence and divergence in terms of formal posterior convergence related to (\ref{eq:convergent_p})
and (\ref{eq:divergent_p}).

\subsection{Development of recursive Bayesian posteriors}
\label{subsec:recursive_posteriors}


We assume that $\left\{y_{j,n_j};j=1,2,\ldots\right\}$ is observed successively at stages indexed by $j$.
That is, we first observe $y_{1,n_1}$, and based on our prior belief regarding the first stage probability, 
$p_{1,n_1}$, compute the posterior distribution of $p_{1,n_1}$ given $y_{1,n_1}$, which we denote by
$\pi(p_{1,n_1}|y_{1,n_1})$.
Based on this posterior we construct a prior for the second stage, and compute the posterior
$\pi(p_{2,n_2}|y_{1,n_1},y_{2,n_2})$. We continue this procedure for as many stages as we desire.
Details follow.

Consider the sequences $\left\{\alpha_j\right\}_{j=1}^{\infty}$ and $\left\{\beta_j\right\}_{j=1}^{\infty}$,
where $\alpha_j=\beta_j=1/j^2$ for $j=1,2,\ldots$.
At the first stage of our recursive Bayesian algorithm, that is, when $j=1$, 
let us assume that the prior is given by
\begin{equation}
\pi(p_{1,n_1})\equiv Beta(\alpha_1,\beta_1),
\label{eq:prior_stage_1}
\end{equation}
where, for $a>0$ and $b>0$, $Beta(a,b)$ denotes the Beta distribution with mean $a/(a+b)$
and variance $(ab)/\left\{(a+b)^2(a+b+1)\right\}$.
Combining this prior with the
likelihood (\ref{eq:likelihood}) (with $j=1$), we obtain the following posterior of $p_{1,n_1}$ given $y_{1,n_1}$:
\begin{equation}
\pi(p_{1,n_1}|y_{1,n_1})\equiv Beta\left(\alpha_1+y_{1,n_1},\beta_1+1-y_{1,n_1}\right).
\label{eq:posterior_stage_1}
\end{equation}
At the second stage (that is, for $j=2$), for the prior of $p_{2,n_2}$ we consider the posterior
of $p_{1,n_1}$ given $y_{1,n_1}$ associated with the $Beta(\alpha_1+\alpha_2,\beta_1+\beta_2)$ prior.
That is, our prior on $p_{2,n_2}$ is given by:
\begin{equation}
\pi(p_{2,n_2})\equiv Beta\left(\alpha_1+\alpha_2+y_{1,n_1},\beta_1+\beta_2+1-y_{1,n_1}\right).
\label{eq:prior_stage_2}
\end{equation}
The reason for such a prior choice is that the uncertainty regarding convergence of the series
is reduced once we obtain the posterior at the first stage, so that at the second stage the uncertainty 
regarding the prior is expected to be lesser compared to the first stage posterior. With our choice, it 
is easy to see that the prior variance at the second stage, given by 
$$\left\{(\alpha_1+\alpha_2+y_{1,n_1})(\beta_1+\beta_2+1-y_{1,n_1})\right\}/\left\{(\alpha_1+\alpha_2+\beta_1+\beta_2+1)^2
(\alpha_1+\alpha_2+\beta_1+\beta_2+2)\right\},$$ 
is smaller than the first stage posterior variance, given by
$$\left\{(\alpha_1+y_{1,n_1})(\beta_1+1-y_{1,n_1})\right\}/\left\{(\alpha_1+\beta_1+1)^2
(\alpha_1+\beta_1+2)\right\}.$$ 

The posterior of $p_{2,n_2}$ given $y_{2,n_2}$ is then obtained by combining the second stage prior
(\ref{eq:prior_stage_2}) with (\ref{eq:likelihood}) (with $j=2$). The form of the posterior
at the second stage is thus given by
\begin{equation}
\pi(p_{2,n_2}|y_{2,n_2})\equiv Beta\left(\alpha_1+\alpha_2+y_{1,n_1}+y_{2,n_2},\beta_1+\beta_2+2-y_{1,n_1}-y_{2,n_2}\right).
\label{eq:posterior_stage_2}
\end{equation}

Continuing this way, at the $k$-th stage, where $k>1$, we obtain the following posterior of $p_{k,n_k}$:
\begin{equation}
\pi(p_{k,n_k}|y_{k,n_k})\equiv Beta\left(\sum_{j=1}^k\alpha_j+\sum_{j=1}^ky_{j,n_j},
k+\sum_{j=1}^k\beta_j-\sum_{j=1}^ky_{j,n_j}\right).
\label{eq:posterior_stage_k}
\end{equation}

It follows from (\ref{eq:posterior_stage_k}) that
\begin{align}
E\left(p_{k,n_k}|y_{k,n_k}\right)&=\frac{\sum_{j=1}^k\alpha_j
+\sum_{j=1}^ky_{j,n_j}}{k+\sum_{j=1}^k\alpha_j+\sum_{j=1}^k\beta_j};
\label{eq:postmean_p_k}\\
Var\left(p_{k,n_k}|y_{k,n_k}\right)&=
\frac{(\sum_{j=1}^k\alpha_j+\sum_{j=1}^ky_{j,n_j})(k+\sum_{j=1}^k\beta_j-\sum_{j=1}^ky_{j,n_j})}
{(k+\sum_{j=1}^k\alpha_j+\sum_{j=1}^k\beta_j)^2(1+k+\sum_{j=1}^k\alpha_j+\sum_{j=1}^k\beta_j)}.
\label{eq:postvar_p_k}
\end{align}
Since $\sum_{j=1}^k\alpha_j=\sum_{j=1}^k\beta_j=\sum_{j=1}^k\frac{1}{j^2}$, (\ref{eq:postmean_p_k})
and (\ref{eq:postvar_p_k}) admit the following simplifications:
\begin{align}
E\left(p_{k,n_k}|y_{k,n_k}\right)&=\frac{\sum_{j=1}^k\frac{1}{j^2}+\sum_{j=1}^ky_{j,n_j}}
{k+2\sum_{j=1}^k\frac{1}{j^2}};
\label{eq:postmean_p_k_2}\\
Var\left(p_{k,n_k}|y_{k,n_k}\right)&=
\frac{(\sum_{j=1}^k\frac{1}{j^2}+\sum_{j=1}^ky_{j,n_j})(k+\sum_{j=1}^k\frac{1}{j^2}-\sum_{j=1}^ky_{j,n_j})}
{(k+2\sum_{j=1}^k\frac{1}{j^2})^2(1+k+2\sum_{j=1}^k\frac{1}{j^2})}.
\label{eq:postvar_p_k_2}
\end{align}

\section{Characterization of convergence properties of the underlying infinite series}
\label{sec:characterization}
Based on our recursive Bayesian theory we have the following theorem that characterizes 
convergence of $S_{1,\infty}$ in terms of the limit of the posterior
probability of $p_{k,n_k}$, as $k\rightarrow\infty$.
Note that the sample space of $S_{1,\infty}$ is also given by $\mathfrak S$.
We also assume, for the sake of generality, that for any $\omega\in\mathfrak S\cap\mathfrak N^c$, where
$\mathfrak N~(\subset\mathfrak S)$ has zero probability measure, the non-negative monotonically 
decreasing sequence $\{c_j\}_{j=1}^{\infty}$
depends upon $\omega$, so that we shall denote the sequence by $\{c_j(\omega)\}_{j=1}^{\infty}$.
In other words, we allow $\left\{c_j(\omega)\right\}_{j=1}^{\infty}$ to depend upon the corresponding series 
$S_{1,\infty}(\omega)$. 
Note that if $S_{1,\infty}(\omega)<\infty$, then 
the sequence $\left\{|S_{j,n_j}(\omega)|\right\}_{j=1}^{\infty}$ is uniformly bounded, 
for all sequences $\{n_j\}_{j=1}^{\infty}$, 
and converges to zero for all sequences $\{n_j\}_{j=1}^{\infty}$, which implies that there exists a 
monotonically decreasing sequence $\left\{c_j(\omega)\right\}_{j=1}^{\infty}$ independent of the choice of
$\{n_j\}_{j=1}^{\infty}$ such that for some $j_0(\omega)\geq 1$,
\begin{equation}
|S_{j,n_j}(\omega)|\leq c_j(\omega),~\mbox{for}~j\geq j_0(\omega). 
\label{eq:bound_S}
\end{equation}
Indeed, in most of our illustrations presented in this paper, including the Riemann Hypothesis, we
choose $\left\{c_j(\omega)\right\}_{j=1}^{\infty}$ in a way that depends upon the infinite series at hand.
\begin{theorem}
\label{theorem:convergence}
For any $\omega\in\mathfrak S\cap\mathfrak N^c$, where $\mathfrak N$ is some null set having probability measure zero, 
$S_{1,\infty}(\omega)<\infty$ if and only if 
there exists a non-negative monotonically decreasing sequence
$\left\{c_j(\omega)\right\}_{j=1}^{\infty}$ such that 
for any choice of the sequence $\{n_j\}_{j=1}^{\infty}$,
\begin{equation}
\pi\left(\mathcal N_1|y_{k,n_k}(\omega)\right)\rightarrow 1,
\label{eq:consistency_at_1}
\end{equation}
as $k\rightarrow\infty$, 
where $\mathcal N_1$ is any neighborhood of 1 (one).
\end{theorem}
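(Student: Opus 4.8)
Fix $\omega\in\mathfrak S\cap\mathfrak N^c$; the whole argument is deterministic in $\omega$. The starting point is that, by (\ref{eq:posterior_stage_k}), the posterior of $p_{k,n_k}$ is $Beta\bigl(A_k+S_k,\;k+A_k-S_k\bigr)$ with $A_k:=\sum_{j=1}^k j^{-2}\in[1,\pi^2/6]$ and $S_k:=\sum_{j=1}^k y_{j,n_j}(\omega)\in\{0,1,\dots,k\}$; both shape parameters are at least $A_k\ge1$ and their sum is $k+2A_k\to\infty$, so by (\ref{eq:postvar_p_k_2}) the posterior variance is at most $1/\{4(k+2A_k+1)\}\to0$, \emph{whatever the data}. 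Step~1 is then a Chebyshev argument: for every neighbourhood $\mathcal N_1$ of $1$, $\pi(\mathcal N_1|y_{k,n_k}(\omega))\to1$ as $k\to\infty$ if and only if $E(p_{k,n_k}|y_{k,n_k}(\omega))\to1$ (one implication is posterior concentration at the mean, the other uses $E(p_{k,n_k}|\cdot)\ge(1-\e)\,\pi((1-\e,1]|\cdot)$ together with $p_{k,n_k}\le1$). Step~2 is the elementary identity from (\ref{eq:postmean_p_k_2}), namely $E(p_{k,n_k}|y_{k,n_k}(\omega))-S_k/k=A_k(k-2S_k)/\{k(k+2A_k)\}$, whose modulus is $\le(\pi^2/6)/(k+2A_k)\to0$ uniformly in the data; hence $E(p_{k,n_k}|\cdot)\to1\iff k^{-1}S_k\to1$. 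Combining, the theorem reduces to the purely analytic claim: $S_{1,\infty}(\omega)$ converges if and only if there is a non-negative sequence $\{c_j(\omega)\}$ decreasing to $0$ such that, for \emph{every} block sequence $\{n_j\}$, the index set $\{j:|S_{j,n_j}(\omega)|>c_j(\omega)\}$ has density zero.

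For the ``only if'' part I use the remark preceding the theorem. If $S_{1,\infty}(\omega)$ converges then, since every block $j$ of any partition has left end-point $\ge j-1\to\infty$ and the Cauchy criterion forces the block sums with diverging left end-point to tend to $0$ and to be uniformly bounded, the quantity $c_j(\omega):=\sup\{|\sum_{i=m+1}^{m+\ell}X_i(\omega)|:m\ge j-1,\ \ell\ge1\}$ is non-negative, non-increasing, tends to $0$, and satisfies $|S_{j,n_j}(\omega)|\le c_j(\omega)$ for all $j$ and all $\{n_j\}$ --- this is precisely the content of (\ref{eq:bound_S}). Then $y_{j,n_j}(\omega)=1$ for every $j$, so $k^{-1}S_k=1\to1$, and Steps~1--2 give (\ref{eq:consistency_at_1}).

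For the ``if'' part I argue by contraposition: suppose $S_{1,\infty}(\omega)$ does not converge and let $\{c_j(\omega)\}$ be an \emph{arbitrary} non-negative sequence decreasing to $0$; I must produce one block sequence $\{n_j\}$ and one neighbourhood $\mathcal N_1$ of $1$ along which $\pi(\mathcal N_1|y_{k,n_k}(\omega))\not\to1$. Failure of the Cauchy criterion for the partial sums $T_n(\omega)=\sum_{i\le n}X_i(\omega)$ yields an $\e_0>0$ and, by repeated application, mutually disjoint integer blocks $[a_1,b_1],[a_2,b_2],\dots$ with $a_1\ge2$, $a_{\ell+1}\ge b_\ell+2$ and $|\sum_{i=a_\ell}^{b_\ell}X_i(\omega)|\ge\e_0$. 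Choose $\{n_j\}$ so that the successive blocks of the partition of $\{1,2,\dots\}$ are, in order, $[1,a_1-1],\,[a_1,b_1],\,[b_1+1,a_2-1],\,[a_2,b_2],\dots$ (all non-empty, by the spacing). Because $c_j(\omega)\downarrow0$ there is $\ell_0$ with $c_{2\ell}(\omega)<\e_0$ for $\ell\ge\ell_0$, so $y_{2\ell,n_{2\ell}}(\omega)=0$ for every $\ell\ge\ell_0$; hence $S_{2L}\le L+\ell_0$ and, by Step~2, $\limsup_L E(p_{2L,n_{2L}}|y_{2L,n_{2L}}(\omega))\le\tfrac12<1$. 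By Step~1 this rules out $\pi(\mathcal N_1|y_{2L,n_{2L}}(\omega))\to1$ for, say, $\mathcal N_1=(3/4,1]$, which is the required contradiction; therefore $S_{1,\infty}(\omega)<\infty$.

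I expect the ``if'' direction to be the main obstacle. The delicate point is that having infinitely many ``bad'' blocks does not suffice --- the bad indices must carry positive density, which is why the construction interleaves bad blocks with gap blocks so that they constitute (at least) half of the partition. A second point worth flagging is that the characterisation genuinely needs $c_j(\omega)\to0$: a decreasing sequence with a strictly positive limit can make the posterior concentrate at $1$ even for a divergent oscillating series such as $X_i=(-1)^{i+1}$, whose block sums all lie in $\{-1,0,1\}$. The remaining ingredients --- vanishing of the posterior variance and the arithmetic of the Beta posterior mean --- are routine.
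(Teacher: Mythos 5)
Your proof is correct, and the skeleton (Beta posterior mean $\approx k^{-1}\sum_j y_{j,n_j}$, vanishing posterior variance, Markov/Chebyshev to pass between the mean and neighbourhood probabilities, explicit construction of $c_j(\omega)$ from the Cauchy criterion in the forward direction) is the same as the paper's. The converse is where you genuinely diverge. The paper argues that non-convergence lets one choose $\{n_j(\omega)\}$ so that \emph{every} block beyond some $j_0(\omega)$ violates the bound, forcing $\sum_j y_{j,n_j}\le j_0(\omega)$ and the posterior mean to $0$; this implicitly uses the fact that a bad block can be started at \emph{every} index $m$ (which is true — if $T_n=\sum_{i\le n}X_i$ does not converge then $\sup_n|T_n-T_{m-1}|\ge(\limsup T-\liminf T)/2>0$ for every $m$ — but the paper asserts it without justification). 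You instead use only the standard failure of the Cauchy criterion, which produces bad intervals somewhere beyond every $N$, and then interleave them with gap blocks so that the bad indices have density $1/2$; this caps the posterior mean at $1/2$ rather than driving it to $0$, which is still enough to refute concentration at $1$. Your route is slightly weaker in conclusion but rests on a strictly weaker (and fully justified) input, and your remark that the density of bad blocks, not merely their infinitude, is what matters is exactly the right point to flag. Your second observation is also well taken: the theorem as stated only asks for a monotonically decreasing $\{c_j(\omega)\}$, but the converse is false without $c_j(\omega)\downarrow 0$ (e.g. $X_i=(-1)^{i+1}$ with $c_j\equiv 2$), and indeed the paper's own proof quietly inserts ``monotonically converging to zero'' at that step; your proof makes the needed hypothesis explicit.
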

\begin{proof}
Let, for $\omega\in\mathfrak S\cap\mathfrak N^c$, 
$S_{1,\infty}(\omega)$ be convergent. 
Then, by (\ref{eq:bound_S}), $|S_{j,n_j}(\omega)|\leq c_j(\omega)$ for all $n_j$, so that $y_{j,n_j}(\omega)=1$ 
for all $j>j_0(\omega)$, for all $n_j$. 
Hence, in this case, $\sum_{j=1}^ky_{j,n_j}(\omega)=k-k_0(\omega)$,
where $k_0(\omega)\geq 0$. Also, $\sum_{j=1}^k\frac{1}{j^2}\rightarrow\frac{\pi^2}{6}$, as $k\rightarrow\infty$. 
Consequently, it is easy to see that
\begin{align}
\mu_k=E\left(p_{k,n_k}|y_{k,n_k}(\omega)\right)&\sim\frac{\frac{\pi^2}{6}+k-k_0(\omega)}{k+\frac{\pi^2}{3}}
\rightarrow 1,~\mbox{as}~k\rightarrow\infty,~\mbox{and},
\label{eq:postmean_p_k_limit}\\
\sigma^2_k=Var\left(p_{k,n_k}|y_{k,n_k}(\omega)\right)&\sim
\frac{(\frac{\pi^2}{6}+k)(\frac{\pi^2}{6})}{(k+\frac{\pi^2}{3})^2(1+k+\frac{\pi^2}{3})}
\rightarrow 0~\mbox{as}~k\rightarrow\infty.
\label{eq:postvar_p_k_limit}
\end{align}
In the above, for any two sequences $\left\{a_k\right\}_{k=1}^{\infty}$ and $\left\{b_k\right\}_{k=1}^{\infty}$,
$a_k\sim b_k$ indicates $\frac{a_k}{b_k}\rightarrow 1$, as $k\rightarrow\infty$. 
Now let $\mathcal N_1$ denote any neighborhood of 1, and let $\epsilon>0$ be sufficiently small such that
$\mathcal N_1\supseteq\left\{1-p_{k,n_k}<\epsilon\right\}$. Combining (\ref{eq:postmean_p_k_limit})
and (\ref{eq:postvar_p_k_limit}) with Chebychev's inequality ensures
that (\ref{eq:consistency_at_1}) holds. 

Now assume that (\ref{eq:consistency_at_1}) holds. 
Then for any given $\epsilon>0$, 
\begin{equation}
\pi\left(p_{k,n_k}>1-\epsilon|y_{k,n_k}(\omega)\right)\rightarrow 1,~\mbox{as}~k\rightarrow\infty.
\label{eq:post1}
\end{equation}
Hence, it can be seen, using Markov's inequality, that
\begin{align}
E\left(p_{k,n_k}|y_{k,n_k}(\omega)\right)&\rightarrow 1;
\label{eq:postmean1}\\
Var\left(p_{k,n_k}|y_{k,n_k}(\omega)\right)&\rightarrow 0,
\label{eq:postvar1}
\end{align}
as $k\rightarrow\infty$.
If $S_{1,\infty}(\omega)$ does not converge then there exists $j_0(\omega)$ such that for each 
$j\geq j_0(\omega)$, there exists $n_j(\omega)$ satisfying 
$\left|S_{j,n_j(\omega)}(\omega)\right|>c_j(\omega)$, for any choice of non-negative sequence 
$\{c_j(\omega)\}_{j=1}^{\infty}$ monotonically converging to zero. 
Hence, in this situation, 
$0\leq \sum_{j=1}^ky_{j,n_j(\omega)}(\omega)\leq j_0(\omega)$.
Substituting this in (\ref{eq:postmean_p_k_2}) and (\ref{eq:postvar_p_k_2}), it is easy to see that,
as $k\rightarrow\infty$,
\begin{align}
E\left(p_{k,n_k(\omega)}|y_{k,n_k(\omega)}(\omega)\right)\rightarrow 0;
\label{eq:postmean_div}\\
Var\left(p_{k,n_k(\omega)}|y_{k,n_k(\omega)}(\omega)\right)\rightarrow 0,
\label{eq:postvar_div}
\end{align}
so that (\ref{eq:postmean1}) is contradicted.

\end{proof}

We now prove the following theorem that provides necessary and sufficient conditions for
divergence of $S_{1,\infty}(\omega)$ in terms of the limit of the posterior
probability of $p_{k,n_k(\omega)}$, as $k\rightarrow\infty$.
\begin{theorem}
\label{theorem:divergence}
For any $\omega\in\mathfrak S\cap\mathfrak N^c$, where $\mathfrak N$ is some null set having probability measure zero, 
$S_{1,\infty}(\omega)$ is divergent if and only if 
there exists a sequence 
$\{n_j(\omega)\}_{j=1}^{\infty}$ such that
\begin{equation}
\pi\left(\mathcal N_0|y_{k,n_k(\omega)}(\omega)\right)\rightarrow 1,
\label{eq:consistency_at_0}
\end{equation}
$k\rightarrow\infty$, 
where $\mathcal N_0$ is any neighborhood of 0 (zero).
\end{theorem}
\begin{proof}
Assume that $S_{1,\infty}(\omega)$ is divergent. Then 
then there exist $j_0(\omega)\geq 1$ such that for every $j\geq j_0(\omega)$, one can find
$n_j(\omega)$ satisfying
$\left|S_{j,n_j(\omega)}(\omega)\right|>c_j(\omega)$, for any choice of non-negative sequence $\{c_j(\omega)\}_{j=1}^{\infty}$
monotonically converging to zero. 
From the proof of the sufficient condition of Theorem \ref{theorem:convergence} it follows that
(\ref{eq:postmean_div}) and (\ref{eq:postvar_div}) hold.
Let $\epsilon>0$ be small enough so that $\mathcal N_0\supseteq\left\{p_{k,n_k(\omega)}<\epsilon\right\}$. Then
combining Chebychev's inequality with (\ref{eq:postmean_div}) and (\ref{eq:postvar_div})
it is easy to see that (\ref{eq:consistency_at_0}) holds.

Now assume that (\ref{eq:consistency_at_0}) holds. 
Then for any given $\epsilon>0$, 
\begin{equation}
\pi\left(p_{k,n_k(\omega)}<\epsilon|y_{k,n_k(\omega)}(\omega)\right)\rightarrow 1,~\mbox{as}~k\rightarrow\infty.
\label{eq:post2}
\end{equation}
It can be seen, now using Markov's inequality with respect to $1-p_{k,n_k(\omega)}$, that 
\begin{align}
E\left(p_{k,n_k(\omega)}|y_{k,n_k(\omega)}(\omega)\right)&\rightarrow 0; 
\label{eq:postmean2}\\
Var\left(p_{k,n_k(\omega)}|y_{k,n_k(\omega)}\right)&\rightarrow 0,
\label{eq:postvar2}
\end{align}
as $k\rightarrow\infty$.  

If $S_{1,\infty}(\omega)$ is convergent, then by Theorem \ref{theorem:convergence}, 
$\pi\left(\mathcal N_1|y_{k,n_k}(\omega)\right)\rightarrow 1$ as $k\rightarrow\infty$, for
all sequences $\{n_j\}_{j=1}^{\infty}$, so that
$E\left(p_{k,n_k(\omega)}|y_{k,n_k(\omega)}(\omega)\right)\rightarrow 1$, which is a contradiction to (\ref{eq:postmean2}). 

\end{proof}

Note that Theorem \ref{theorem:divergence} encompasses even oscillatory series. For instance, if for some
$\omega\in\mathfrak S\cap\mathfrak N^c$, $S_{1,\infty}(\omega)=\sum_{i=1}^{\infty}\left(-1\right)^{i-1}$, then
the sequence $n_j(\omega)=1+3(j-1)$ ensures that $|S_{j,n_j}(\omega)|>c_j(\omega)$ for all $j\geq j_0(\omega)$,
for some $j_0(\omega)\geq 1$, for any monotonically decreasing non-negative sequence $\{c_j(\omega)\}_{j=1}^{\infty}$. 
This of course forces declaration of divergence of this particular series, 
as per Theorem \ref{theorem:divergence}.
We show in Section S-4.1 of the supplement, 
with the help of our Bayesian idea of studying
oscillatory series, how to identify the number and proportions of the limit points of this oscillatory series.

\subsection{Characterization of infinite series using non-recursive Bayesian posteriors}
\label{subsec:non_recursive}
Observe that it is not strictly necessary for the prior at any stage to depend upon the previous stage.
Indeed, we may simply assume that $\pi\left(p_{j,n_j}\right)\equiv Beta\left(\alpha_j,\beta_j\right)$,
for $j=1,2,\ldots$. In this case, the posterior of $p_{k,n_k}$ given $y_{k,n_k}$ is simply
$Beta\left(\alpha_k+y_{k,n_k},1+\beta_k-y_{k,n_k}\right)$. The posterior mean and variance are then given by
\begin{align}
E\left(p_{k,n_k}|y_{k,n_k}(\omega)\right)&=\frac{\alpha_k+y_{k,n_k}(\omega)}
{1+\alpha_k+\beta_k};
\label{eq:postmean_p_k_3}\\
Var\left(p_{k,n_k}|y_{k,n_k}(\omega)\right)&=
\frac{(\alpha_k+y_{k,n_k}(\omega))(1+\beta_k-y_{k,n_k}(\omega))}
{(1+\alpha_k+\beta_k)^2(2+\alpha_k+\beta_k)}.
\label{eq:postvar_p_k_3}
\end{align}
Since $y_{k,n_k}(\omega)$ converges to $1$ or $0$ as $k\rightarrow\infty$, accordingly as
$S_{1,\infty}(\omega)$ is convergent or divergent, it is easily seen, provided that $\alpha_k\rightarrow 0$
and $\beta_k\rightarrow 0$ as $k\rightarrow\infty$, that (\ref{eq:postmean_p_k_3}) converges to $1$ (respectively, $0$) 
if and only if $S_{1,\infty}(\omega)$ is convergent (respectively, divergent). 

Thus, characterization of convergence or divergence of infinite series is possible even with the non-recursive approach.
Indeed, note that the prior parameters $\alpha_k$ and $\beta_k$ are more flexible compared to those
associated with the recursive approach. This is because, in the non-recursive approach 
we only require $\alpha_k\rightarrow 0$ and $\beta_k\rightarrow 0$ as 
$k\rightarrow\infty$, so that convergence of the series $\sum_{j=1}^{\infty}\alpha_j$ and $\sum_{j=1}^{\infty}\beta_j$
are not necessary, unlike the recursive approach. However, choosing $\alpha_k$ and $\beta_k$  to be of sufficiently
small order ensures much faster convergence of the posterior mean and variance as compared to the recursive approach.

Unfortunately, an important drawback of the non-recursive approach is that it does not admit extension
to the case of general oscillatory series with multiple limit points, where blocks of partial sums can not be used; see Section
S-3 of the supplement. On the other hand, as we show in Section S-3 of the supplement, the principles of our recursive theory   
can be easily adopted to develop a Bayesian characterization of oscillating series, which also includes
the characterization of non-oscillating series as a special case. In other words, the recursive
approach seems to be more powerful from the perspective of development of a general characterization theory. 
Moreover, as our examples on convergent and divergent series demonstrate, the recursive posteriors converge 
sufficiently fast to the correct degenerate distributions, obviating the need to consider the non-recursive approach.
Consequently, we do not further pursue the non-recursive approach in this article but reserve the topic for further
investigation in the future.


\begin{remark}
An important issue associated with our characterization results is that the terms $\left\{x_1,x_2,\ldots\right\}$ of the underlying deterministic series of interest $\sum_{i=1}^{\infty}x_i$
is assumed to lie in the complement of the null set. For appropriately specified stochastic processes this need not be difficult to verify. However, for
the sake of sufficient generality we have not assumed any specific form of the underlying stochastic process, which makes the question of null sets
relevant in our case. The solution is that, even if $\left\{x_1,x_2,\ldots\right\}$ falls in some null set, we can still compute a pseudo-posterior distribution of $p_{k,n_k}$
conditional on $\left\{x_1,x_2,\ldots\right\}$,
which has exactly the same form as before. This pseudo-posterior may not admit interpretability as a {\it bona fide} posterior distribution, but characterizes the 
convergence property of $\sum_{i=1}^{\infty}x_i$ in exactly the same way as before. In other words, interestingly and very importantly, all our results of characterization 
presented in our paper hold for {\it all} $\omega\in\mathfrak S$.
\end{remark}

\section{Illustrations}
\label{sec:illustrations}
We now illustrate our ideas with seven examples. These seven examples can be categorized into three
categories in terms of construction of the upper bound $c_{j}$. With the first example
we demonstrate that it may sometimes be easy to devise an appropriate upper bound. In Examples
2 -- 5, we show that usually simple bounds such as that in Example 1, are not adequate in practice,
but appropriate bounds may be constructed if convergence and divergence of the series in question
is known for some values of the parameters; the resultant bounds can be utilized to learn about
convergence or divergence of the series for the remaining values of the parameters. In Examples
6 and 7, the series in question are stand-alone in the sense they are not defined
by parameters with known convergence/divergence for some of their values which might have aided our
construction of $c_{j}$. However, we show that these series can be embedded into appropriately
parameterized series, facilitating similar analysis as Examples 2 -- 5.

For these examples, we consider $n_j=n$ for $j=1,\ldots,K$, with $n=10^6$ and $K=10^5$. 
Since $n$ seems to be sufficiently large, in the case of divergence we expect $|S_{j,n}|$ to exceed 
the monotonically decreasing $c_j$ for all $j\geq j_0$, for sufficiently large $j_0$. Our experiments
demonstrate that this is indeed the case. For further justification we conducted some experiments with larger values
of $n$, but the results remained unchanged. Hence, for relative computational ease we set $n=10^6$ for
the illustrations in this work.

Since we needed to sum $10^6$ terms at each step of $10^5$ stages, 
the associated computation is extremely demanding. For the purpose of efficiency, we parallelized
the computation of the sums of $10^6$ terms, splitting the job on many processors, using the
Message Passing Interface (MPI) protocol. In more details, 
we implemented our parallelized codes, written in C, in VMware consisting of 60 double-threaded, 64-bit physical cores, 
each running at 2793.269 MHz. Parallel computation of our methods 
associated with Examples 1 to 5 take, respectively, 1 minute, 4 minutes, 7 minutes, 6 minutes, and 9 minutes. 
Examples 6 and 7 require about 6 minutes and 4 minutes of computational time.

For space issues we present our applications to the first four examples here; the applications of the
remaining examples are provided in Section S-2 of the supplement.

\subsection{Example 1}
\label{subsec:example1}
In their first example \ctn{Bou12} study the following divergent series with their methods:
\begin{equation}
S=\sum_{i=2}^{\infty}\frac{1}{\log(i)}.
\label{eq:example1}
\end{equation}
We test our Bayesian idea on this series choosing the monotonically decreasing sequence as $c_{j,n}=1/\sqrt{nj}$,
where we represent $c_j$ as $c_{j,n}$ to reflect dependence on $n$. 
Figure \ref{fig:example1}, a plot of the posterior means of $\left\{p_{k,n};k=1,\ldots,10^5\right\}$,  
clearly and correctly indicates that the series is divergent.
We also constructed approximate 95\% highest posterior density credible intervals at each recursive step; however,
thanks to very less variances at each stage, the intervals turned out to be too small to be clearly distinguishable
from the plot of the stage-wise posterior means. 
\begin{figure}
\centering
\includegraphics[width=6cm,height=5cm]{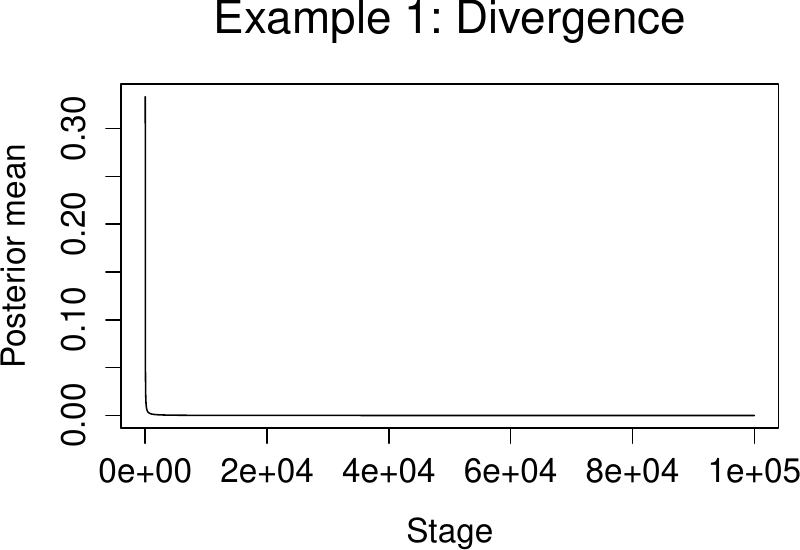}
\caption{Example 1: The series (\ref{eq:example1}) is divergent.}
\label{fig:example1}
\end{figure}

\subsection{Example 2}
\label{subsec:example2}
Example 2 of \ctn{Bou12} deals with the following series:
\begin{equation}
S^a=\sum_{i=2}^{\infty}\left(1-\left\{\frac{\log(i)}{i}\right\}-a\frac{\log\log(i)}{i}\right)^i,
\label{eq:example2}
\end{equation}
where $a\in\mathbb R$. \ctn{Bou12} prove that the series converges for $a>1$ and diverges for $a\leq 1$.

\subsubsection{Choice of $c_{j,n}$}
\label{subsubsec:example2_c}
Now, however, selecting the monotone sequence as $c_{j,n}=1/\sqrt{nj}$ turn out to be inappropriate
for this series, the behaviour of which is quite sensitive to the parameter $a$, particularly around $a=1$. 
Hence, any appropriate sequence $\left\{c_{j,n}\right\}_{j=1}^{\infty}$ must depend on the parameter $a$
of the series (\ref{eq:example2}). 

Denoting $c_{j,n}$ by $c^a_{j,n}$ to reflect the dependence on $a$
as well, we first set 
\begin{equation}
u^a_{j,n}=S^{a_0}_{j,n}+\frac{(a-1-9\times 10^{-11})}{\log(j+1)}, 
\label{eq:u_example2}
\end{equation}
and then let 
\begin{equation}
c^a_{j,n}=\left\{\begin{array}{ccc} u^a_{j,n}, &\mbox{if}~u^a_{j,n}>0;\\
S^{a_0}_{j,n}, & \mbox{otherwise.}\end{array}\right.
\label{eq:example2_c}
\end{equation}
where $a_0=1+10^{-10}$. The reason behind such a choice of $c^a_{j,n}$ is provided below. 

Let, for $\epsilon>0$, 
\begin{equation}
\tilde S=\sup\left\{S^a:a\geq 1+\epsilon\right\}.
\label{eq:tilde_S_example2}
\end{equation}
Thus, $\tilde S$ may be interpreted as the convergent series which is closest to divergence
given the convergence criterion $a\geq 1+\epsilon$.
Since $S^a$ is decreasing in $a$, it easily follows that equality of (\ref{eq:tilde_S_example2})
is attained at $a_0=1+\epsilon$.


Since the terms of the series $S^a$ are decreasing in $i$, it follows that $S^{a_0}_{j,n}$ in (\ref{eq:example2_c})
is decreasing in $j$.
We assume that $\epsilon$ is chosen to be so small that convergence properties
of the series for $\left\{a\leq 1\right\}\cup\left\{a\geq 1+\epsilon\right\}$ are only desired. 
Indeed, since $\left(1-\left\{\frac{\log(i)}{i}\right\}-a\frac{\log\log(i)}{i}\right)^i$ is decreasing in $a$
for any given $i\geq 3$, our method of constructing $c^a_{j,n}$ need not
be able to correctly identify the convergence properties of the series for $1<a<1+\epsilon$.

For the purpose of illustrations we choose $\epsilon=10^{-10}$.
Note that for $a>1$ the term $\frac{(a-1-9\times 10^{-11})}{\log(j+1)}$ inflates $c^a_{j,n}$
making $S^a_{j,n}$ more likely to fall below $c^a_{j,n}$ for increasing $a$, thus paving the way for  
diagnosing convergence. The same term also 
ensures that for $a\leq 1$, $c^a_{j,n}<S^{a_0}_{j,n}$, so that
$S^a_{j,n}$ is likely to exceed $c^a_{j,n}$, thus providing an inclination towards divergence. The term 
$-9\times 10^{-11}$ is an adjustment for the case $a=1+10^{-10}$, ensuring that $c^a_{j,n}$ marginally exceeds $S^a_{j,n}$
to ensure convergence.
The scaling factor $\log(j+1)$ ensures that the part $\frac{(a-1-9\times 10^{-11})}{\log(j+1)}$ of 
(\ref{eq:example2_c}) tends to zero at a slow rate so that $c^a_{j,n}$ is decreasing with $j$ and $n$ even if
$a-1-9\times 10^{-11}$ is negative.

Figure \ref{fig:example2}, depicting our Bayesian results for this series, is in agreement
with the results of \ctn{Bou12}. In fact, we have applied our methods to many more values of $a\in A_{\epsilon}$
with $\epsilon=10^{-10}$, and in every case the correct result is vindicated.
\begin{figure}
\centering
\subfigure [Divergence: $a = 1-10^{-10}$.]{ \label{fig:example1_a_less_1}
\includegraphics[width=6cm,height=5cm]{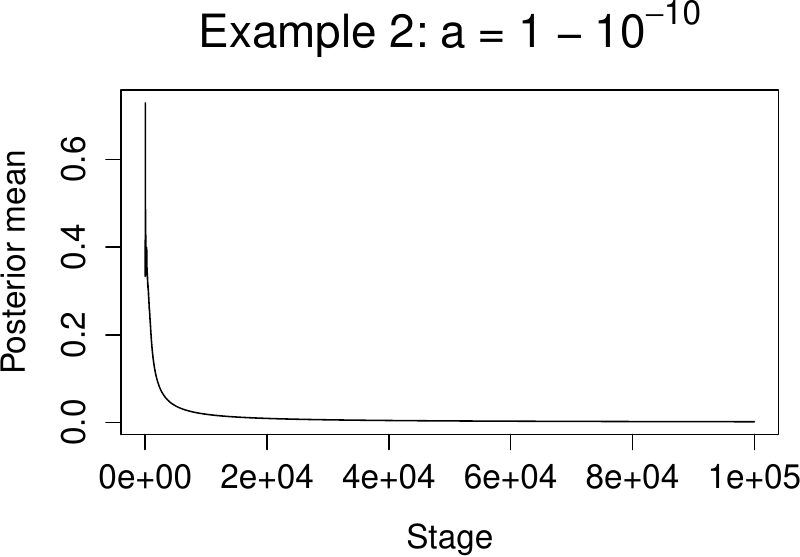}}
\hspace{2mm}
\subfigure [Divergence: $a=1$.]{ \label{fig:example2_a_1}
\includegraphics[width=6cm,height=5cm]{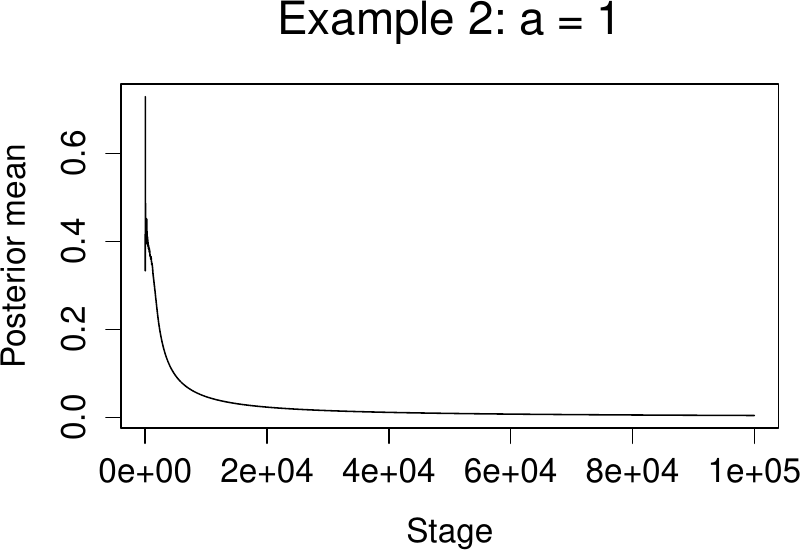}}\\
\subfigure [Convergence: $a=1+10^{-10}$.]{ \label{fig:example2_a_greater_1}
\includegraphics[width=6cm,height=5cm]{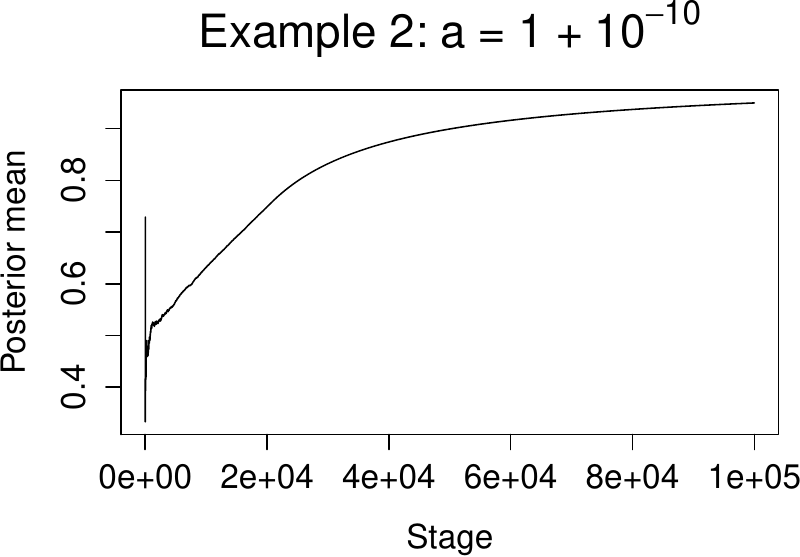}}
\hspace{2mm}
\subfigure [Convergence: $a=1+20^{-10}$.]{ \label{fig:example2_a_greater_1_2}
\includegraphics[width=6cm,height=5cm]{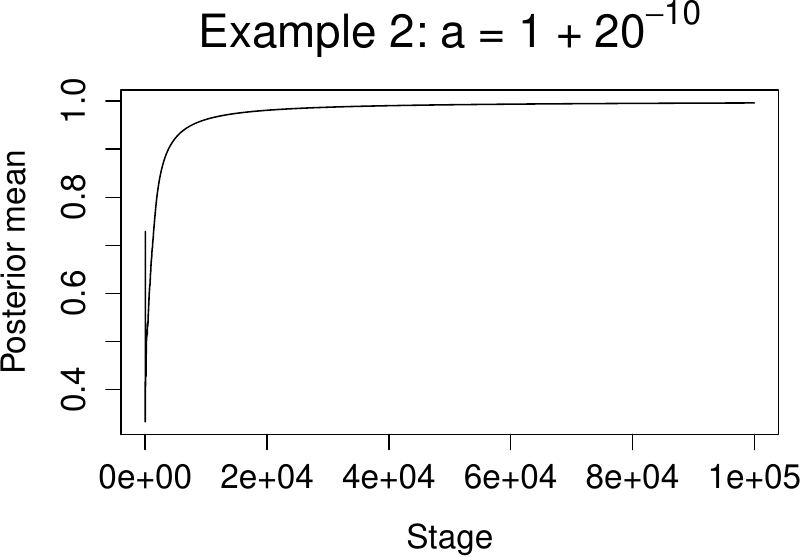}}\\
\subfigure [Divergence: $a=-1$.]{ \label{fig:example2_a_minus_1}
\includegraphics[width=6cm,height=5cm]{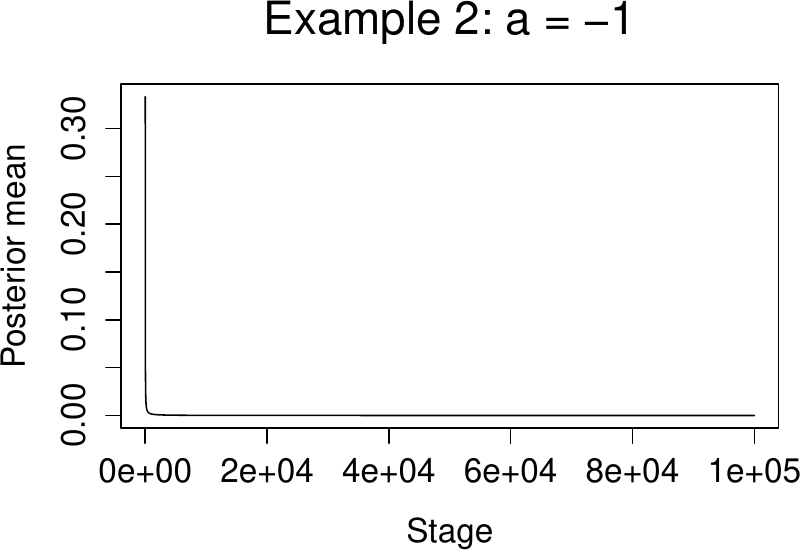}}\\
\caption{Example 2: The series (\ref{eq:example2}) converges for $a>1$ and diverges
for $a\leq 1$.}
\label{fig:example2}
\end{figure}

\subsection{Example 3}
\label{subsec:example3}
Let us now consider the following series analysed by \ctn{Bou12}:
\begin{equation}
S=\sum_{i=3}^{\infty}\left(1-\left(\frac{\log(i)}{i}\right)a^{\frac{\log\log(i)}{\log(i)}}\right)^i,
\label{eq:example3}
\end{equation}
where $a>0$. As is shown by \ctn{Bou12}, the series converges for $a>e$
and diverges for $a\leq e$.

\subsubsection{Choice of $c_{j,n}$}
\label{subsubsec:example3_c}

Here we first set 
\begin{equation}
u^a_{j,n}=S^{a_0}_{j,n}+\frac{(a-e-9\times 10^{-11})}{\log(j+1)}, 
\label{eq:u_example3}
\end{equation}
and then let $c^a_{j,n}$ defined by (\ref{eq:example2_c}).
Again, it is easily seen that $S^{a_0}_{j,n}$ is decreasing in $j$.
In this example we set $a_0=e+10^{-10}$. The rationale behind the choice remains the same as detailed in 
Section \ref{subsubsec:example2_c}. 

As before, the results obtained by our Bayesian theory, as displayed in Figure \ref{fig:example3}, are
in complete agreement with the
results obtained by \ctn{Bou12}.
\begin{figure}
\centering
\subfigure [Divergence: $a=e-10^{-10}$.]{ \label{fig:example3_e_less_1}
\includegraphics[width=6cm,height=5cm]{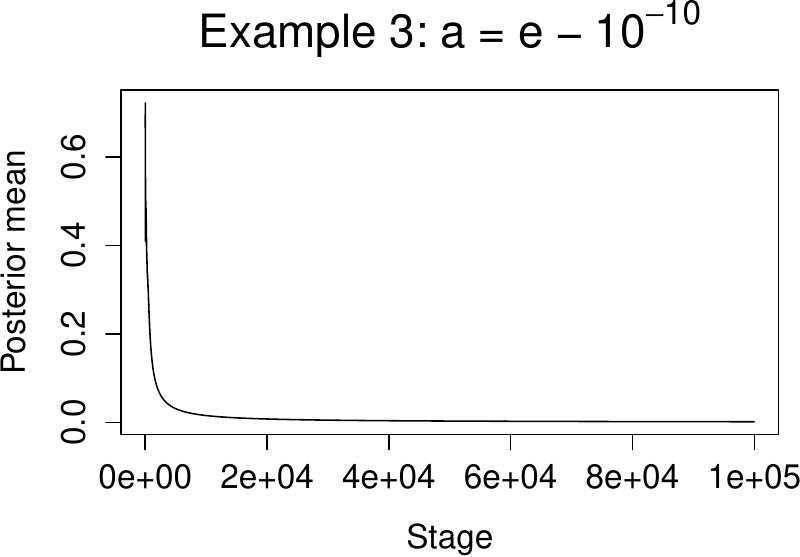}}
\hspace{2mm}
\subfigure [Divergence: $a=e$.]{ \label{fig:example3_a_e}
\includegraphics[width=6cm,height=5cm]{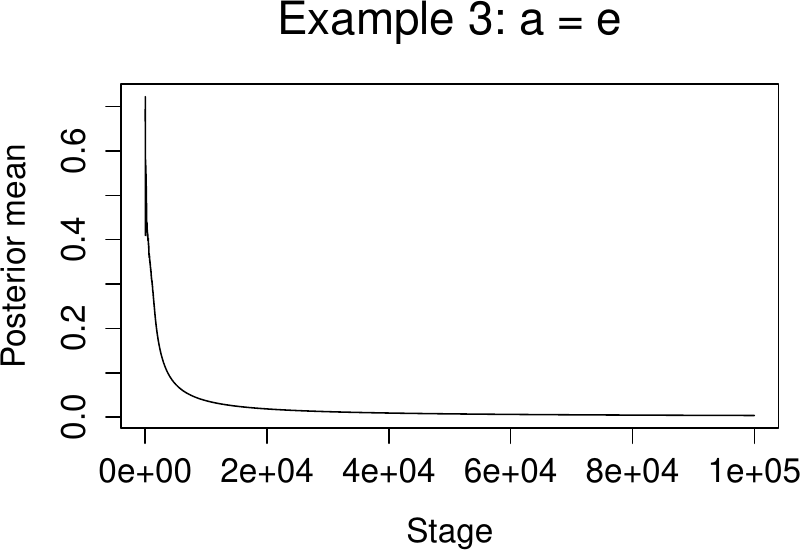}}\\
\subfigure [Convergence: $a=e+10^{-10}$.]{ \label{fig:example3_a_greater_e}
\includegraphics[width=6cm,height=5cm]{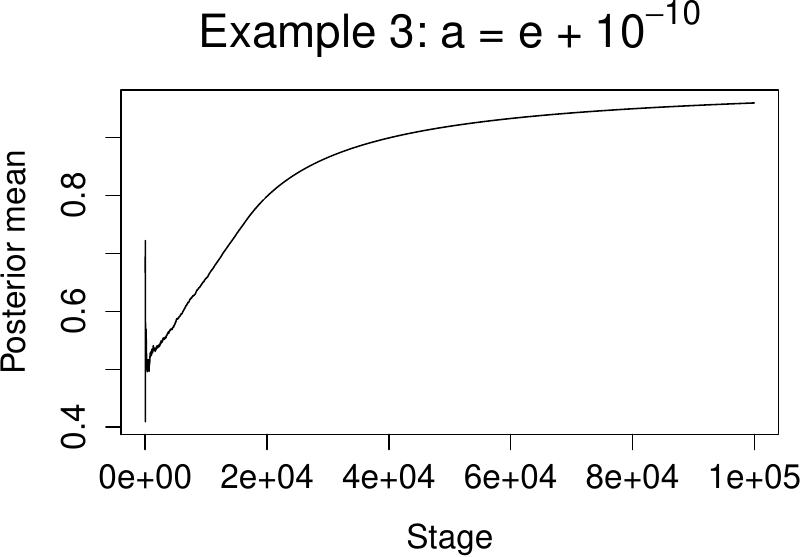}}
\subfigure [Convergence: $a=e+20^{-10}$.]{ \label{fig:example3_a_e2}
\includegraphics[width=6cm,height=5cm]{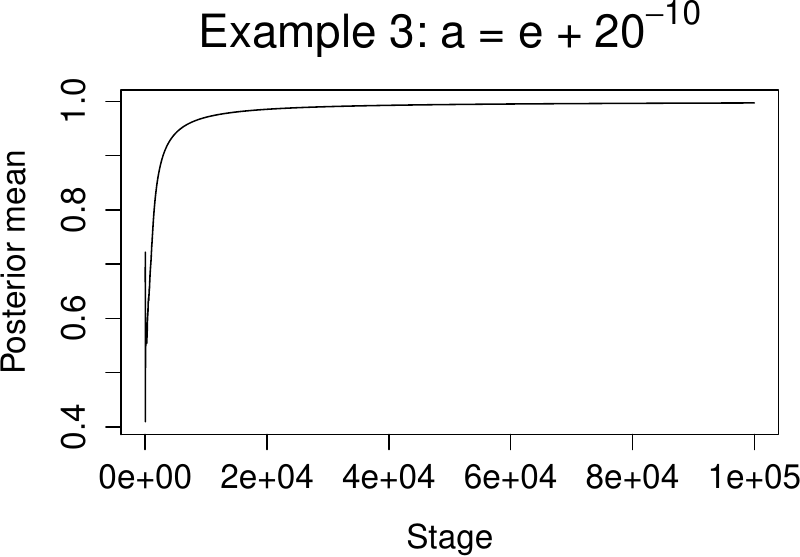}}
\caption{Example 3: The series (\ref{eq:example3}) converges for $a>e$ and diverges
for $a\leq e$.}
\label{fig:example3}
\end{figure}

\subsection{Example 4}
\label{subsec:example4}
We now consider series (\ref{eq:inconclusive1}). It has been proved by
\ctn{Bou12} that the series is convergent for $a-b>1$ and divergent for $a+b<1$.
As mentioned before, the hierarchy of tests of \ctn{Bou12} are
inconclusive for $a=b=1$. 

In this example we denote the partial sums by $S^{a,b}_{j,n}$ and the actual series $S$ by
$S^{a,b}$ to reflect the dependence on both
the parameters $a$ and $b$. 
\begin{equation}
S^{a,b}_{j,n}=\sum_{i=3+n(j-1)}^{3+nj-1}\left(1-\frac{\log i}{i}-\frac{\log\log i}{i}
\left\{\cos^2\left(\frac{1}{i}\right)\right\}\left(a+(-1)^ib\right)\right)^i,
\label{eq:S_example4}
\end{equation}
We then have the following lemma, the proof of which is
presented in Section S-1 of the supplement. 
\begin{lemma}
\label{lemma:example4}
For series (\ref{eq:inconclusive1}), for $j\geq 1$ and $n$ even, 
$S^{a,b}_{j,n}$ given by (\ref{eq:S_example4}) is decreasing in $a$ but increasing in $b$.
\end{lemma}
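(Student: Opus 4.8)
The plan is to reduce the claim about the finite sum $S^{a,b}_{j,n}$ to a term-by-term analysis of the general summand
\[
T_i(a,b)=\left(1-\frac{\log i}{i}-\frac{\log\log i}{i}\left\{\cos^2\left(\frac{1}{i}\right)\right\}\left(a+(-1)^i b\right)\right)^i,
\]
since a finite sum of functions each decreasing in $a$ (resp.\ each increasing in $b$) is itself decreasing in $a$ (resp.\ increasing in $b$). So first I would fix an index $i\geq 3$ and write $g_i=\frac{\log\log i}{i}\cos^2\!\left(\frac{1}{i}\right)\geq 0$ and $h_i=1-\frac{\log i}{i}$, so that $T_i(a,b)=\bigl(h_i-g_i(a+(-1)^i b)\bigr)^i$. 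Differentiating in $a$ gives $\partial T_i/\partial a = -\,i\,g_i\,\bigl(h_i-g_i(a+(-1)^i b)\bigr)^{i-1}$, and differentiating in $b$ gives $\partial T_i/\partial b = -\,(-1)^i\,i\,g_i\,\bigl(h_i-g_i(a+(-1)^i b)\bigr)^{i-1}$.

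The sign of each derivative is therefore controlled by the sign of the base $B_i:=h_i-g_i(a+(-1)^i b)$ raised to the power $i-1$. The key step is to argue that $B_i>0$ for all $i\geq 3$ in the relevant parameter range (around $a=b=1$), so that $B_i^{\,i-1}>0$; then $\partial T_i/\partial a<0$ always, giving monotone decrease in $a$, while $\partial T_i/\partial b$ has the sign of $-(-1)^i=(-1)^{i+1}$. Here the hypothesis that $n$ is even is exactly what is needed: over the block of indices $i$ ranging over $\{3+n(j-1),\dots,3+nj-1\}$, an even block length $n$ means the indices split into equally many even and odd values — but more to the point, pairing consecutive terms $i$ (even) and $i+1$ (odd), one shows the net effect on the block sum of increasing $b$ is nonnegative. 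Actually the cleaner route: since for odd $i$ we get $\partial T_i/\partial b>0$ and for even $i$ we get $\partial T_i/\partial b<0$, I would pair each even $i$ with the adjacent odd $i+1$ inside the block (possible because $n$ is even, so the block contains complete consecutive pairs) and show that the positive contribution from the odd index dominates the negative contribution from the even index, using that $g_i$ and $B_i$ vary slowly in $i$ while the odd-index term has the favorable sign; summing over pairs then yields $\partial S^{a,b}_{j,n}/\partial b\geq 0$.

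The main obstacle is establishing positivity of the base $B_i=1-\frac{\log i}{i}-\frac{\log\log i}{i}\cos^2\!\left(\frac{1}{i}\right)\bigl(a+(-1)^i b\bigr)$ for all $i$ in the summation range: one needs $\frac{\log i}{i}+\frac{\log\log i}{i}\cos^2\!\left(\frac{1}{i}\right)(a+(-1)^i b)<1$, which for $a,b$ near $1$ and $i\geq 3$ follows since $\frac{\log i}{i}\to 0$ and $\frac{\log\log i}{i}\to 0$, but must be checked uniformly — in particular handling the smallest few indices ($i=3,4,5$) explicitly and then using monotone decay of $\frac{\log i}{i}$ and $\frac{\log\log i}{i}$ for larger $i$. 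The second delicate point is the pairing estimate for the $b$-monotonicity: controlling the ratio $B_{i+1}^{\,i}/B_i^{\,i-1}$ and the near-equality $g_{i+1}\approx g_i$ so that the odd-index (positive) derivative term genuinely outweighs the even-index (negative) one. I expect both of these to be routine but somewhat technical estimates, which is presumably why the full proof is deferred to Section S-1 of the supplement.
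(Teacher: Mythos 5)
Your overall strategy -- termwise monotonicity in $a$, and a pairing of consecutive summands to handle $b$ -- is the same as the paper's, and your treatment of the $a$-direction is fine (indeed you are more careful than the supplement about the sign of the base $B_i$, which the paper's one-line claim ``each term is decreasing in $a$'' silently requires). However, your $b$-argument has a concrete flaw in the pairing. The block runs over $i=3+n(j-1),\dots,3+nj-1$, and because $n$ is even the \emph{first} index $r=3+n(j-1)$ is \emph{odd}; the paper pairs $(r,r+1),(r+2,r+3),\dots$, i.e.\ each \emph{odd} index with the following \emph{even} index. You instead propose to pair each even $i$ with the following odd $i+1$. That pairing does not even tile the block (it strands the odd first index and the even last index), and, more importantly, it puts the inequality on the wrong side.

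The mechanism that makes the paper's pair sums increase in $b$ is the strict monotonicity of the weight $w_i=\frac{\log\log i}{i}\cos^2\bigl(\frac{1}{i}\bigr)$: the paper shows $w_i$ is \emph{decreasing} in $i$ (because $\frac{\log\log i}{i}$ decays faster than $\cos^2(1/i)$ grows), so in the pair (odd $i$, even $i+1$) the positive $b$-derivative (odd index, coefficient $a-b$) carries the \emph{larger} weight $w_i$ and the negative $b$-derivative (even index, coefficient $a+b$) carries the smaller weight $w_{i+1}$, whence the pair is net increasing in $b$. With your pairing the larger weight sits on the even, negative contribution, so the same comparison would point toward the pair \emph{decreasing} in $b$; your appeal to ``$g_i$ and $B_i$ vary slowly in $i$'' cannot repair this, since the whole argument rests on the strict inequality $w_i>w_{i+1}$ going the right way, not on near-equality. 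To fix the proof you need to (i) observe that $r$ is odd, (ii) pair odd-with-following-even, and (iii) isolate and prove the monotone decrease of $w_i$, which is the key lemma your write-up never states.
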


Since $S^{a,b}$ is just summation of the partial sums, it follows that
\begin{corollary}
\label{corollary:example4}
$S^{a,b}$ is decreasing in $a$ and increasing in $b$.
\end{corollary}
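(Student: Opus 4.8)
The plan is to deduce the corollary directly from Lemma~\ref{lemma:example4} by expressing the full series as a limit of block partial sums. Fix any even $n$ (e.g.\ $n=2$). By the definition of the blocks in (\ref{eq:S_example4}), the groups $S^{a,b}_{1,n},S^{a,b}_{2,n},\ldots$ partition the terms of $S^{a,b}$ into consecutive runs, so that for every $N\geq 1$,
\[
\sum_{j=1}^{N}S^{a,b}_{j,n}=\sum_{i=3}^{nN+2}\left(1-\frac{\log i}{i}-\frac{\log\log i}{i}\left\{\cos^2\left(\frac{1}{i}\right)\right\}\left(a+(-1)^ib\right)\right)^i,
\]
which is an ordinary partial sum of $S^{a,b}$. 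Hence $S^{a,b}=\lim_{N\to\infty}\sum_{j=1}^{N}S^{a,b}_{j,n}$ on the parameter regions where the series has a value (in particular where it converges, or diverges to $\pm\infty$).

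Next I would invoke Lemma~\ref{lemma:example4}: since $n$ is even, for every $j\geq 1$ the block $S^{a,b}_{j,n}$ is decreasing in $a$ and increasing in $b$. Thus for $a_1\leq a_2$ we get $S^{a_1,b}_{j,n}\geq S^{a_2,b}_{j,n}$ for each $j$, and for $b_1\leq b_2$ we get $S^{a,b_1}_{j,n}\leq S^{a,b_2}_{j,n}$ for each $j$. Summing these weak inequalities over $j=1,\ldots,N$ preserves them, and then letting $N\to\infty$ (limits preserve weak inequalities) yields $S^{a_1,b}\geq S^{a_2,b}$ and $S^{a,b_1}\leq S^{a,b_2}$, i.e.\ $S^{a,b}$ is decreasing in $a$ and increasing in $b$.

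There is essentially no hard step; the only points needing a line of care are (i) observing that the block decomposition in (\ref{eq:S_example4}) reassembles exactly into partial sums of $S^{a,b}$, so the particular even $n$ used to apply the Lemma does not affect the value of the series, and (ii) if one wants the statement to cover oscillatory parameter values too, reading ``decreasing/increasing'' at the level of partial sums: $\sum_{i=3}^{M}(\cdots)$ is monotone in $a$ and in $b$ for every finite $M$, which follows from the blockwise application of Lemma~\ref{lemma:example4} together with the same elementary termwise monotonicity (each summand is decreasing in $a$ and, for even $n$, the paired terms sum to something increasing in $b$) used to establish the Lemma for the leftover terms when $M$ is not of the form $nN+2$.
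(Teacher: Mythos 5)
Your proof is correct and follows essentially the same route as the paper, which disposes of the corollary with the single observation that $S^{a,b}$ is just the summation of the partial sums $S^{a,b}_{j,n}$ to which Lemma~\ref{lemma:example4} applies. Your additional care about reassembling the blocks into ordinary partial sums and about oscillatory parameter values only makes explicit what the paper leaves implicit.
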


We let 
\begin{equation}
A_{\epsilon}=\left\{a:0\leq a\leq 1\right\}\cup\left\{a:a\geq 1+\epsilon\right\},
\label{eq:A_example4}
\end{equation}
and
\begin{equation}
\tilde S=\underset{a\in A_{\epsilon}}{\inf}\underset{b\geq 0}{\sup}~\left\{S^{a,b}:a-b>1\right\}.
\label{eq:tilde_S}
\end{equation}
It is easy to see in this case, due to Corollary \ref{corollary:example4} and the convergence
criterion $a-b>1$, that $\tilde S$ is attained at $a_0=1+\epsilon$ and $b_0=0$.
As before, we set $\epsilon=10^{-10}$.
Hence, arguments similar to those in Section \ref{subsubsec:example2_c} lead to the following choice of the
upper bound for $S^{a,b}_{j,n}$, which we denote in this example by $c^{a,b}_{j,n}$:
\begin{equation}
c^{a,b}_{j,n}=\left\{\begin{array}{ccc} u^{a,b}_{j,n}, &\mbox{if}~u^{a,b}_{j,n}>0;\\
S^{a_0,b_0}_{j,n}, & \mbox{otherwise},\end{array}\right.
\label{eq:example4_c}
\end{equation}
where $a_0=1+10^{-10}$, $b_0=0$, and 
\begin{equation}
u^{a,b}_{j,n}=S^{a_0,b_0}_{j,n}+\frac{(a-1-b-9\times 10^{-11})}{\log(j+1)}. 
\label{eq:u_example4}
\end{equation}
As before, it is easily seen that $S^{a_0,b_0}_{j,n}$ is decreasing in $j$. Also note that $-b$ 
in (\ref{eq:u_example4}) takes account of the fact that the partial sums are increasing in $b$,
thus favouring divergence for increasing $b$.

Setting aside panel (c) of Figure \ref{fig:example4b}, observe that the remaining panels of
Figures \ref{fig:example4a} and \ref{fig:example4b} are in agreement with the results of \ctn{Bou12}, 
but in the case $a=b=1$, the tests of \ctn{Bou12} turned out to be 
inconclusive. Panel (c) of Figure \ref{fig:example4b} demonstrates that the series is divergent for $a=b=1$.
\begin{figure}
\centering
\subfigure [Convergence: $a=3,b=1$.]{ \label{fig:example4_a_3_b_1}
\includegraphics[width=6cm,height=5cm]{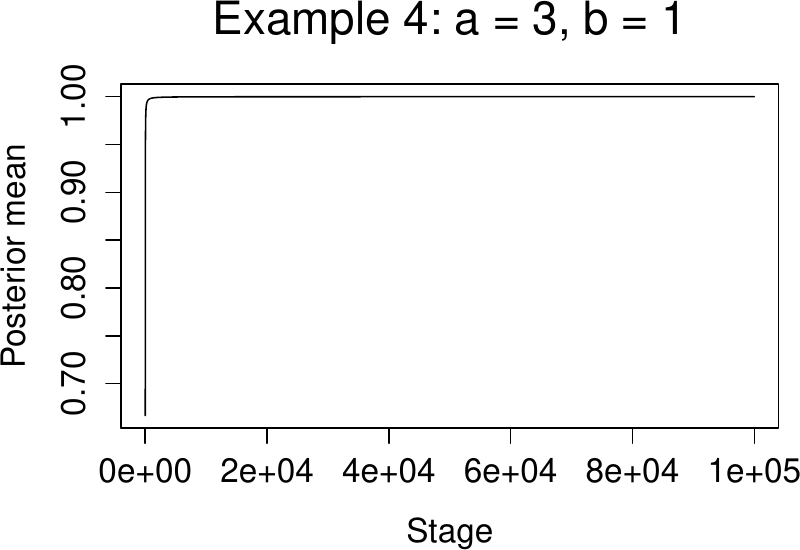}}
\hspace{2mm}
\subfigure [Convergence: $a=1+10^{-10},b=0$.]{ \label{fig:example4_a11_b00}
\includegraphics[width=6cm,height=5cm]{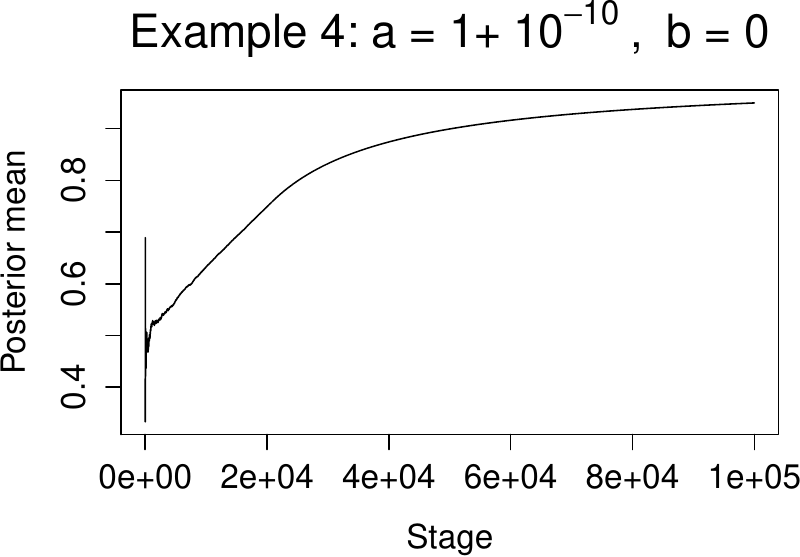}}\\
\subfigure [Convergence: $a=1+20^{-10},b=10^{-10}$.]{ \label{fig:example4_a_12_b_01}
\includegraphics[width=6cm,height=5cm]{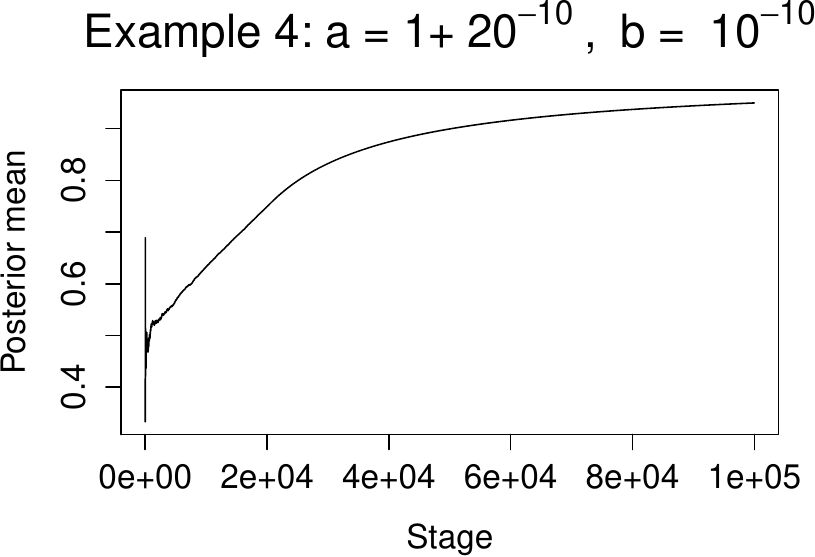}}
\hspace{2mm}
\subfigure [Divergence: $a=1/2,b=1/3$.]{ \label{fig:example4_a_1_2_b_1_3}
\includegraphics[width=6cm,height=5cm]{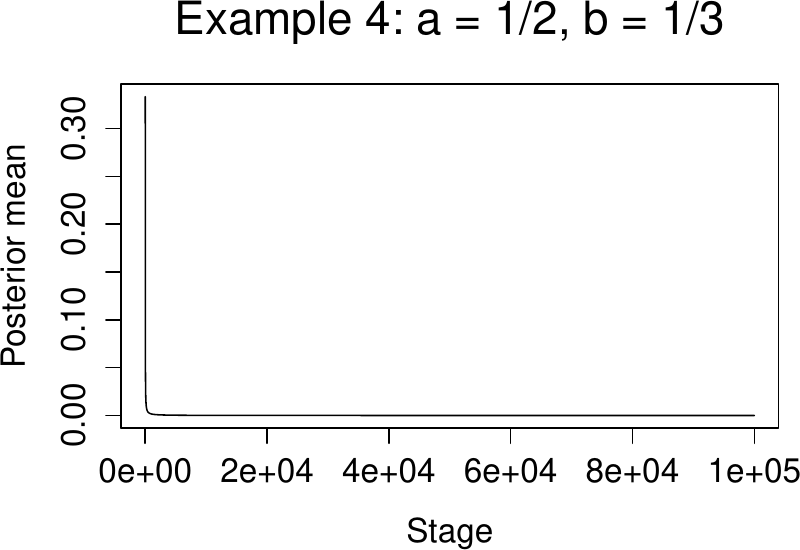}}
\caption{Example 4: The series (\ref{eq:inconclusive1}) converges for $(a=3,b=1)$, $\left(a=1+10^{-10},b=0\right)$,
$\left(a=1+20^{-10},b=10^{-10}\right)$ and diverges for $\left(a=1/2,b=1/3\right)$.}
\label{fig:example4a}
\end{figure}

\begin{figure}
\centering
\subfigure [Divergence: $a=\frac{1}{2}\left(1-10^{-11}\right),
b=\frac{1}{2}\left(1-10^{-11}\right)$.]{ \label{fig:example4_a+b_less_1}
\includegraphics[width=6cm,height=5cm]{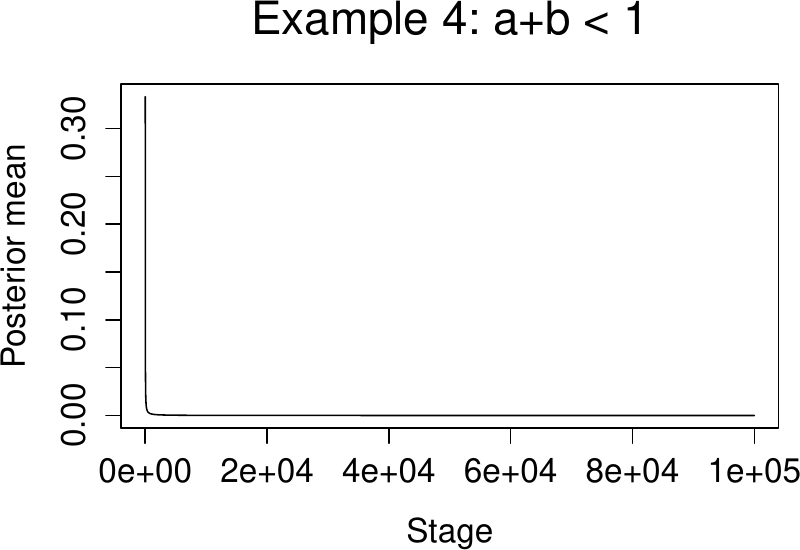}}
\hspace{2mm}
\subfigure [Divergence: $a=1,b=0$.]{ \label{fig:example4_a_1_b_0}
\includegraphics[width=6cm,height=5cm]{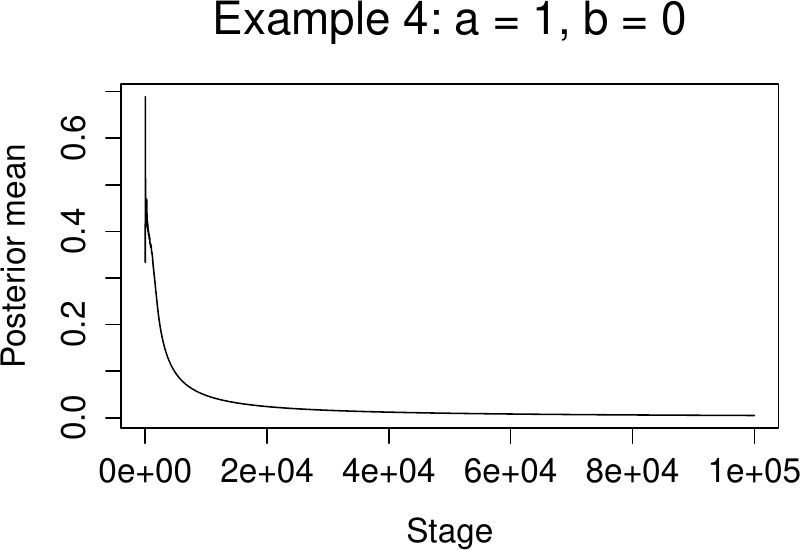}}
\hspace{2mm}
\subfigure [Divergence: $a=1,b=1$.]{ \label{fig:example4_a_1_b_1}
\includegraphics[width=6cm,height=5cm]{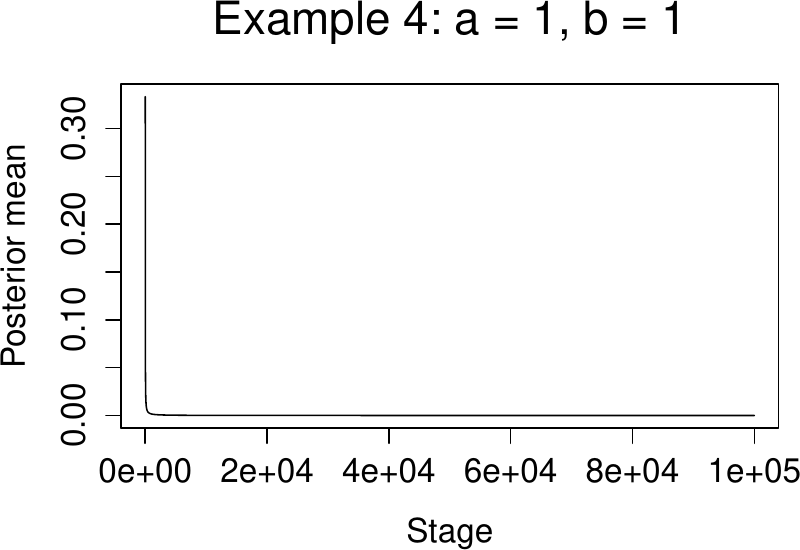}}
\caption{Example 4: The series (\ref{eq:inconclusive1}) diverges for 
$\left(a=\frac{1}{2}\left(1-10^{-11}\right),b=\frac{1}{2}\left(1-10^{-11}\right)\right)$, 
$(a=1,b=0)$ and $(a=1,b=1)$.}
\label{fig:example4b}
\end{figure}

\section{Application to Riemann Hypothesis}
\label{sec:RH}
\subsection{Brief background}
\label{subsec:brief_background}
Consider the Riemann zeta function given by
\begin{equation}
\zeta(a)=\frac{1}{1-2^{1-a}}\sum_{n=0}^{\infty}\frac{1}{2^{n+1}}\sum_{k=0}^n\left(-1\right)^k
\frac{n!}{k!(n-k)!}(k+1)^{-a},
\label{eq:Riemann_zeta}
\end{equation}
where $a$ is complex.
The above function is formed by first considering Euler's function
\begin{equation}
Z(a)=\sum_{n=1}^{\infty}\frac{1}{n^a},
\label{eq:Euler}
\end{equation}
then by multiplying both sides of (\ref{eq:Euler}) by $\left(1-\frac{2}{2^a}\right)$ to obtain
\begin{equation}
\left(1-\frac{2}{2^a}\right)Z(a)=\sum_{n=1}^{\infty}\frac{\left(-1\right)^{n+1}}{n^a},
\label{eq:Euler2}
\end{equation}
and then dividing the right hand side of (\ref{eq:Euler2}) by $\left(1-\frac{2}{2^a}\right)$.
The advantage of the function $\zeta(a)$ in comparison with the parent function $Z(a)$ is that,
$Z(a)$ is divergent if the real part of $a$, which we denote by $Re(a)$, is less than or equal to $1$,
while $\zeta(a)$ is convergent for all $a$ with $Re(a)>0$. Importantly, $\zeta(a)=Z(a)$ whenever $Z(a)$
is convergent.

Whenever $0<Re(a)<1$, $\zeta(a)$ satisfies the following identity:
\begin{equation}
\zeta(a)=2^a\pi^{a-1}\sin\left(\frac{\pi a}{2}\right)\Gamma(1-a)\zeta(1-a),
\label{eq:zeta_identity}
\end{equation}
where $\Gamma(\cdot)$ is the gamma function.
This can be extended to the set of complex numbers by defining a function with non-positive real part
by the right hand side of (\ref{eq:zeta_identity}); abusing notation, we denote the new function by
$\zeta(a)$. Because of the sine function, it follows that  
the trivial zeros of the above function occur when the values of $a$ are negative even integers.
Hence, the non-trivial zeros must satisfy $0<Re(a)<1$.

\ctn{Riemann1859} conjectured that all the non-trivial zeros have the real part $1/2$, which is
the famous Riemann Hypothesis.
For accessible account of the Riemann Hypothesis, see \ctn{Borwein06}, \ctn{Derbyshire04}.

One equivalent condition for the Riemann Hypothesis is related to sums of 
of the M\"{o}bius function, given by
\begin{equation}
\mu(n)=\left\{
\begin{array}{ccc}-1 & \mbox{if} & n~\mbox{is a square-free positive integer with an odd number of prime factors};\\
0 & \mbox{if} & n~\mbox{has a squared prime factor};\\
1 & \mbox{if} & n~\mbox{is a square-free positive integer with an even number of prime factors},
\end{array}
\right.
\label{eq:mobius2}
\end{equation}
where, by square-free integer we mean that the integer is not divisible by any perfect square other than $1$.
Specifically, the condition 
\begin{equation}
\sum_{n=1}^x\mu(n)=O\left(x^{\frac{1}{2}+\epsilon}\right)
\label{eq:Merten}
\end{equation}
for any $\epsilon>0$,
is equivalent to Riemann Hypothesis. This condition implies that the Dirichlet series for the M\"{o}bius function,
given by
\begin{equation}
M(a)=\sum_{n=1}^{\infty}\frac{\mu(n)}{n^a}=\frac{1}{\zeta(a)},
\label{eq:mobius}
\end{equation}
is analytic in $Re(a)>1/2$. This again ensures that $\zeta(a)$ is meromorphic in $Re(a)>1/2$ 
and that it has no zeros in this region. Using the functional equation (\ref{eq:zeta_identity}) it follows
that there are no zeros of $\zeta(a)$ in $0<Re(a)<1/2$ either. Hence, (\ref{eq:Merten}) implies
Riemann Hypothesis. The converse is also certainly true. 

The above arguments also imply that convergence of $M(a)$ in (\ref{eq:mobius}) 
for $Re(a)>1/2$ is equivalent to Riemann Hypothesis, and it is this criterion that is of our interest
in this paper.
Now, $M(a)$ converges absolutely for $Re(a)>1$; 
moreover, $M(1)=0$.
The latter is equivalent to the prime number
theorem stating that the number of primes below $x$ is asymptotically $x/\log(x)$, as $x\rightarrow\infty$
(\ctn{Landau06}).
Thus, $M(a)$ converges for $Re(a)\geq 1$. That $M(a)$ diverges for $Re(a)\leq 1/2$
can be seen as follows. Note that if $M(a)$ converged for any $a^*$ such that $Re(a^*)\leq 1/2$, then analytic
continuation for Dirichlet series of the form $M(a)$ would guarantee convergence of $M(a)$ for all $a$ with $Re(a)>Re(a^*)$.
But $\zeta(a)$ is not analytic on $0<Re(a)<1$ because of its non-trivial zeros on the strip. This would contradict
the analytic continuation leading to the identity $M(a)=1/\zeta(a)$ on the entire set of complex numbers.
Hence, $M(a)$ must be divergent for $Re(a)\leq 1/2$.

In this paper, we apply our ideas to particularly investigate convergence of $M(a)$ when $1/2<a<1$.

\subsection{Choice of the upper bound and implementation details}
\label{subsec:upper_bound_RH}
To form an idea of the upper bound we first plot the partial sums $S^a_{j,n}$, for
$j=1000$ and $n=10^6$, with respect to $a$. In this regard, panel (a) of Figure \ref{fig:plot_partial_sums}
shows the decreasing nature of the partial sums with respect to $a$, and panel (b) magnifies the plot
in the domain $1/2<a<1$ that we are particularly interested in. The latter shows that the partial sums
decrease sharply till about $0.7$, getting appreciably close to zero around that point,
after which the rate of decrease diminishes. Thus, one may expect a change point around $0.7$ regarding convergence.
Specifically, divergence may be expected below a point slightly larger than $0.7$ and convergence above it.
\begin{figure}
\centering
\subfigure [Plot of partial sums in the domain $(0,5)$.]{ \label{fig:plot_partial_sums_RH}
\includegraphics[width=6cm,height=5cm]{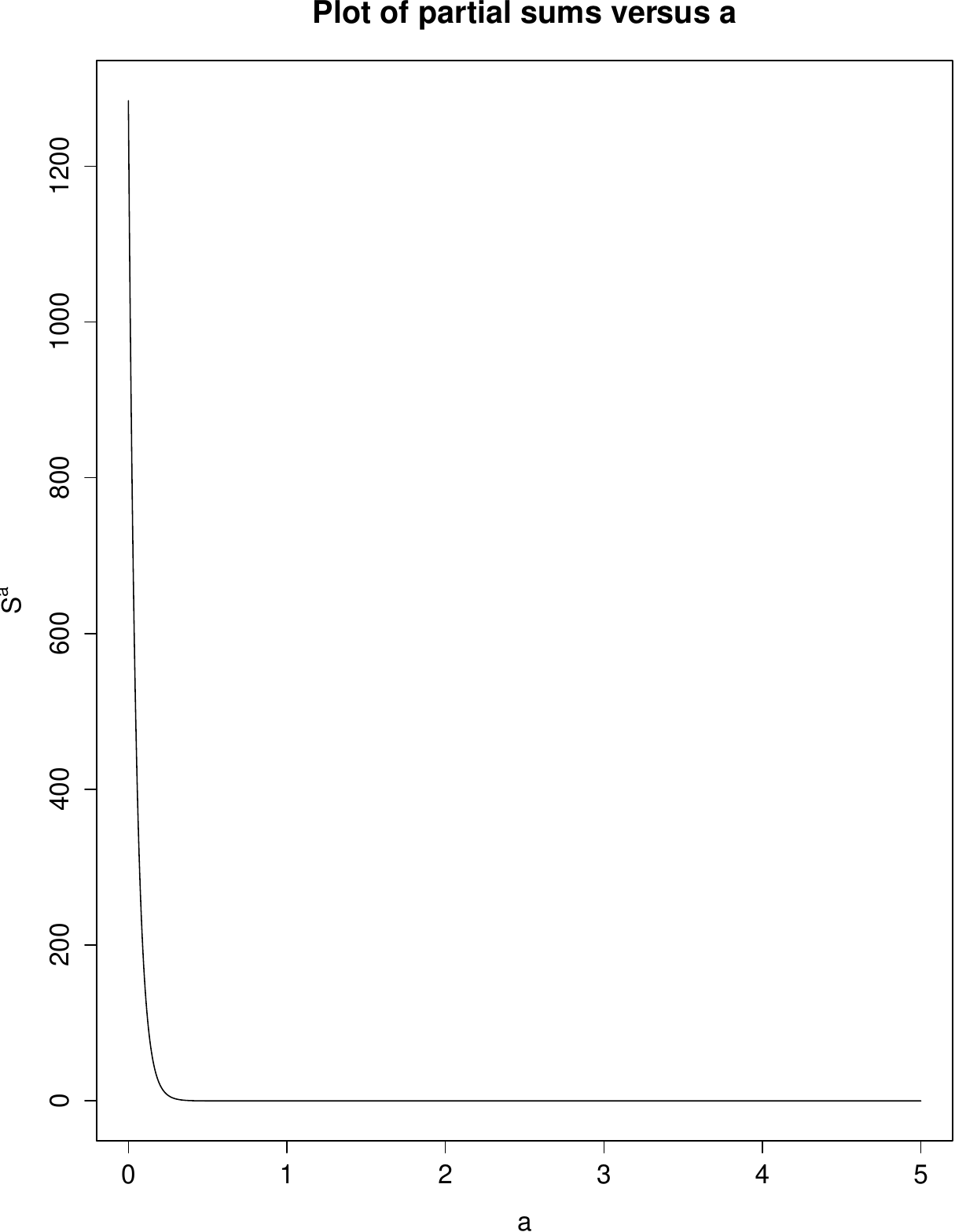}}
\hspace{2mm}
\subfigure [Plot of partial sums in the domain $(0.5,1)$.]{ \label{fig:plot_partial_sums_RH_magnify}
\includegraphics[width=6cm,height=5cm]{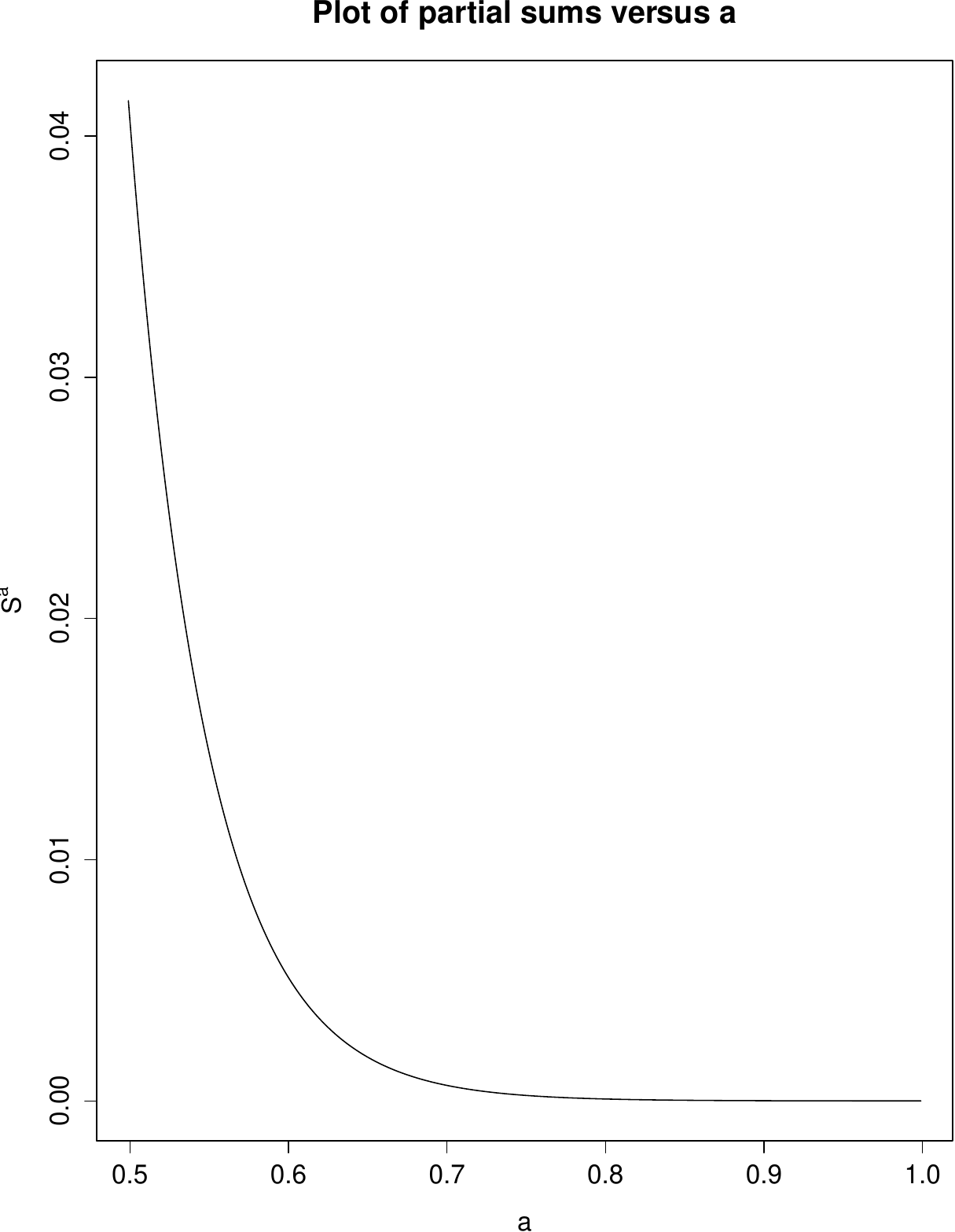}}
\caption{Plot of the partial sums $S^a_{1000,1000000}$ versus $a$. Panel (a) shows the plot in the domain
$[0,5]$ while panel (b) magnifies the same in the domain $(0.5,1)$.}
\label{fig:plot_partial_sums}
\end{figure}

Since $M(1)<\infty$, we consider this series as the basis for our upper bound,
with the value of $a$ also taken into account.
Specifically, we choose the upper bound as
\begin{equation}
c_{j,n}=\left|S^1_{j,n}+\frac{a}{j+1}\right|.
\label{eq:upper_bound_RH}
\end{equation}
Since Figure \ref{fig:plot_partial_sums} shows that the partial sums are of monotonically decreasing nature, 
the above choice of upper bound facilitates detection of convergence for relatively large values of $a$.
The part $\frac{a}{j+1}$, which tends to zero as $j\rightarrow\infty$, 
takes care of the fact that the series may be convergent if $a<1$, by slightly inflating $S^1_{j,n}$.

For our purpose, we compute the first $10^9$ values of the M\"{o}bius function using an efficient algorithm
proposed in \ctn{Lioen94}, which is based on the Sieve of Eratosthenes (\ctn{Eratosthenes}). We set $K=1000$ and $n=10^6$.
A complete analysis with our VMware with our parallel implementation takes about $2$ minutes.

\subsection{Results of our Bayesian analysis}
\label{subsec:results_RH}
Panels (a)--(e) of Figure \ref{fig:RH_1} and panels (d)--(f) of Figure \ref{fig:RH_2} show the $M(a)$ diverges for 
$a=0.1$, $0.2$, $0.3$, $0.4$, $0.5$, but converges for
$a=1+10^{-10}$, $2$ and $3$. 
In fact, for many other values
that we experimented with, $M(a)$ converged for $a>1$ and diverged for $a<1/2$,
demonstrating remarkable consistency with the known, existing results.

Certainly far more important are the results for $1/2<a<1$. Indeed,
panel (f) of Figure \ref{fig:RH_1} and panels (a)--(c) of Figure \ref{fig:RH_2} show that $M(a)$ diverged for
$a=0.6$ and $0.7$ and converged for $a=0.8$ and $0.9$. It thus appears that $M(a)$ diverges
for $a<a^*$ and converges for $a\geq a^*$, for some $a^*\in(0.7,0.8)$. Figure \ref{fig:RH_3} displays
results of our further experiments in this regard. Panels (a) and (b) of Figure \ref{fig:RH_3} show
the posterior means for the full set of iterations and the last $500$ iterations, respectively, for $a=0.71$.
Note that from panel (a), convergence seems to be attained, although towards the end, the plot seems to
be slightly tilted downwards. Panel (b) magnifies this, clearly showing divergence. Panels (c) and (d) of
Figure \ref{fig:RH_3} depict similar phenomenon for $a=0.715$, but as per panel (d), divergence seems to
ensue all of a sudden, even after showing signs of convergence for the major number of iterative stages. 
Convergence of $M(a)$ begins at $a=0.72$ (approximately); panels (e) and (f)
of Figure \ref{fig:RH_3} take clear note of this.

Thus, as per our methods, $M(a)$ diverges for $a<0.72$ and converges for $a\geq 0.72$.
This is remarkably in keeping with the wisdom gained from panel (b) of Figure \ref{fig:plot_partial_sums}
that convergence is expected to occur for values of $a$ exceeding $0.7$.
Note that neither the upper bound (\ref{eq:upper_bound_RH}), nor our methodology, is in any way biased towards
$a\approx 0.7$; hence, our result is perhaps not implausible.  

\subsection{Implications of our result}
\label{subsec:implications}
As per our results, $M(a)$ does not converge for all $a>1/2$, and hence does not completely
support Riemann Hypothesis. However, convergence of $M(a)$ fails only for the relatively small region
$0.5<a<0.72$, which perhaps is the reason why there exists much evidence in favour of Riemann Hypothesis.

\begin{figure}
\centering
\subfigure [Divergence: $a=0.1$.]{ \label{fig:RH_a_01}
\includegraphics[width=6cm,height=5cm]{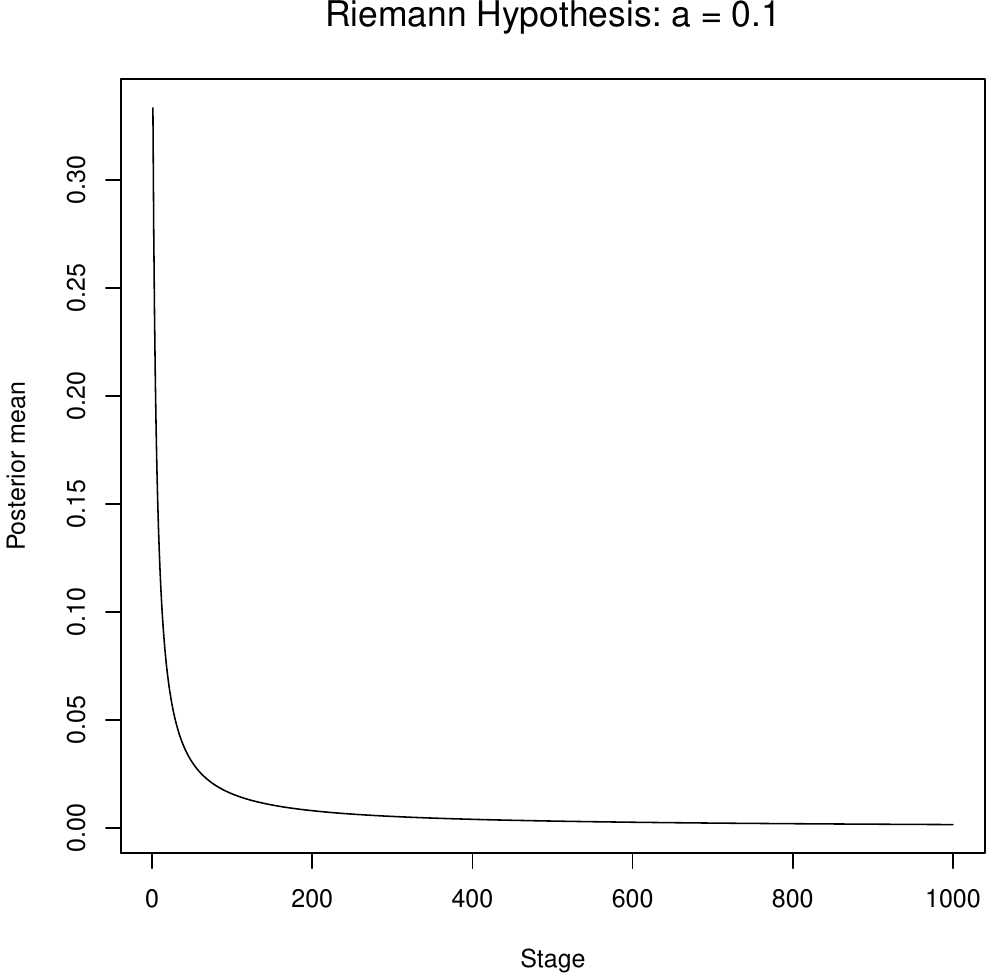}}
\hspace{2mm}
\subfigure [Divergence: $a=0.2$.]{ \label{fig:RH_a_02}
\includegraphics[width=6cm,height=5cm]{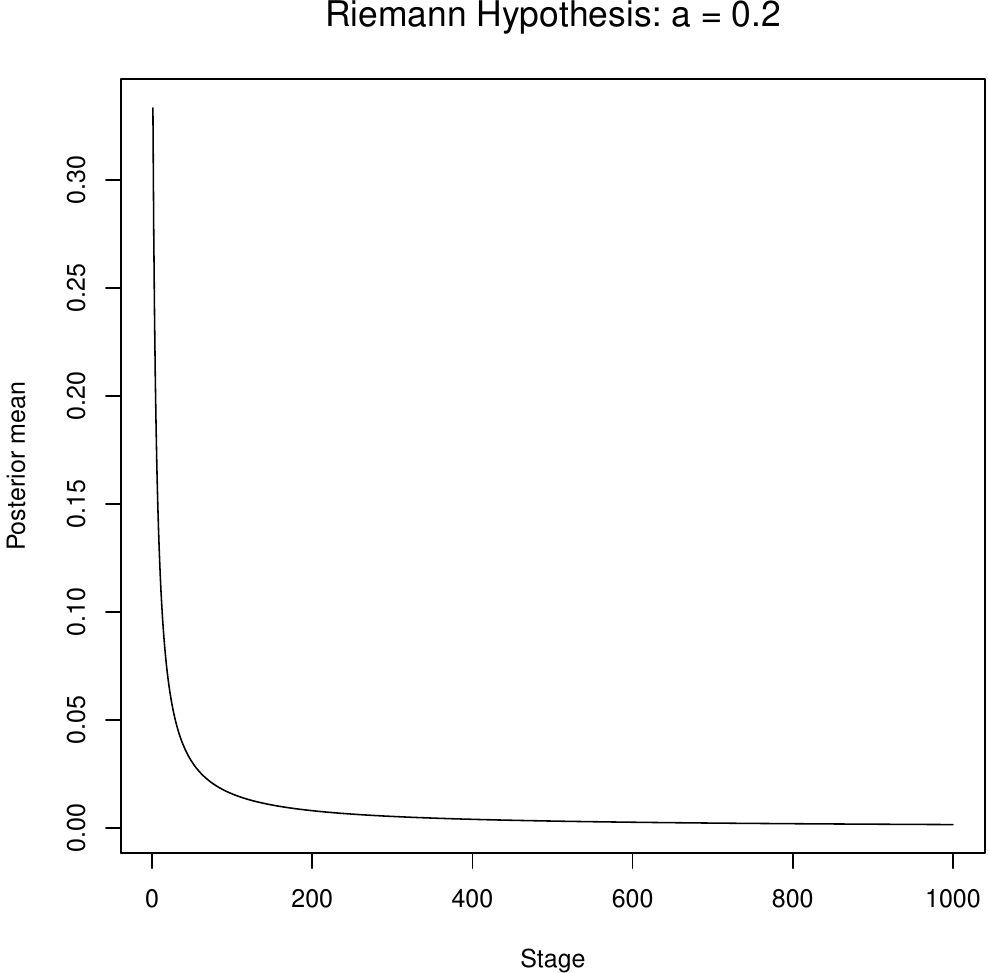}}\\
\subfigure [Divergence: $a=0.3$.]{ \label{fig:RH_a_03}
\includegraphics[width=6cm,height=5cm]{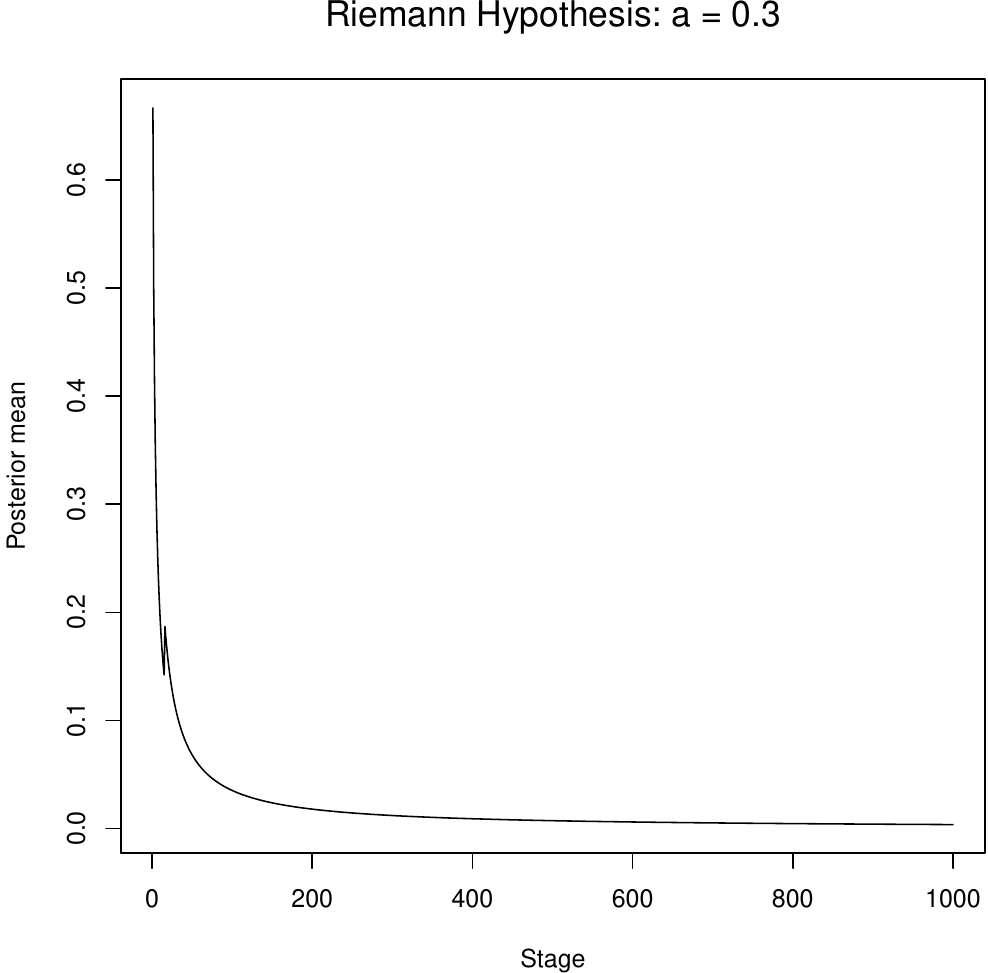}}
\hspace{2mm}
\subfigure [Divergence: $a=0.4$.]{ \label{fig:RH_a_04}
\includegraphics[width=6cm,height=5cm]{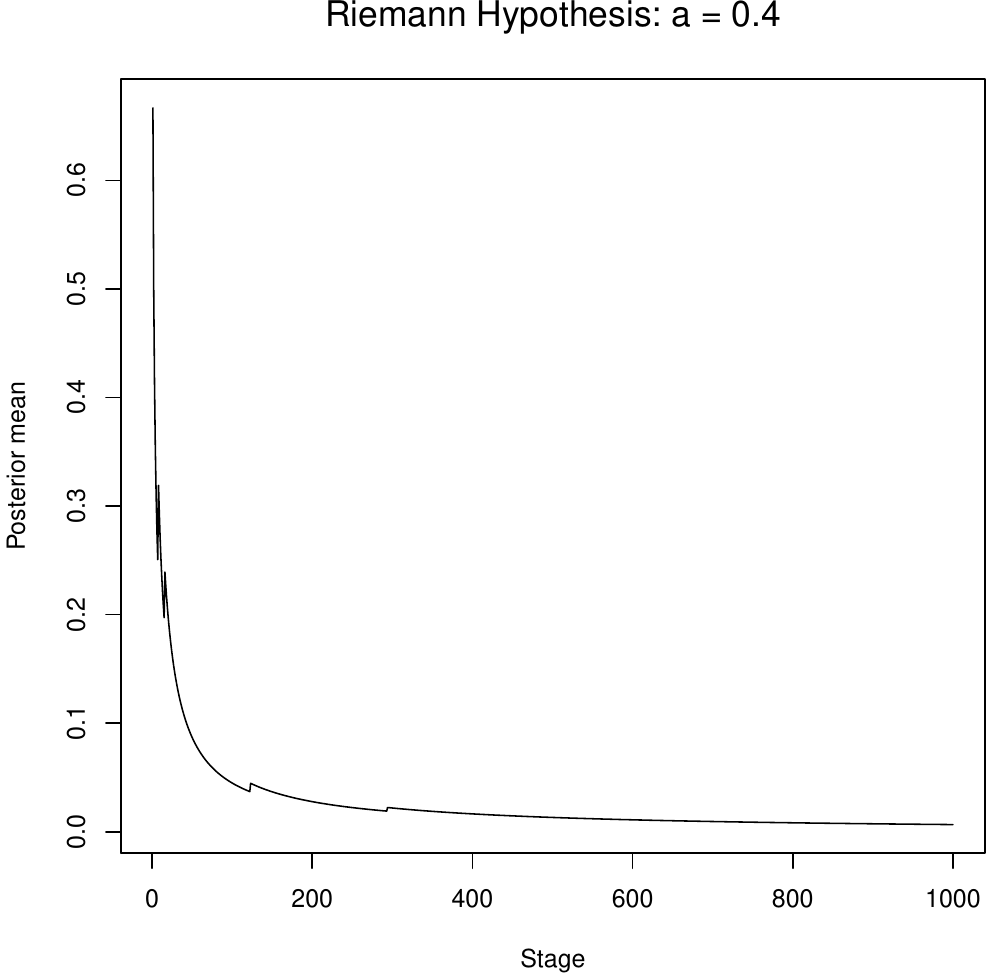}}\\
\subfigure [Divergence: $a=0.5$.]{ \label{fig:RH_a_05}
\includegraphics[width=6cm,height=5cm]{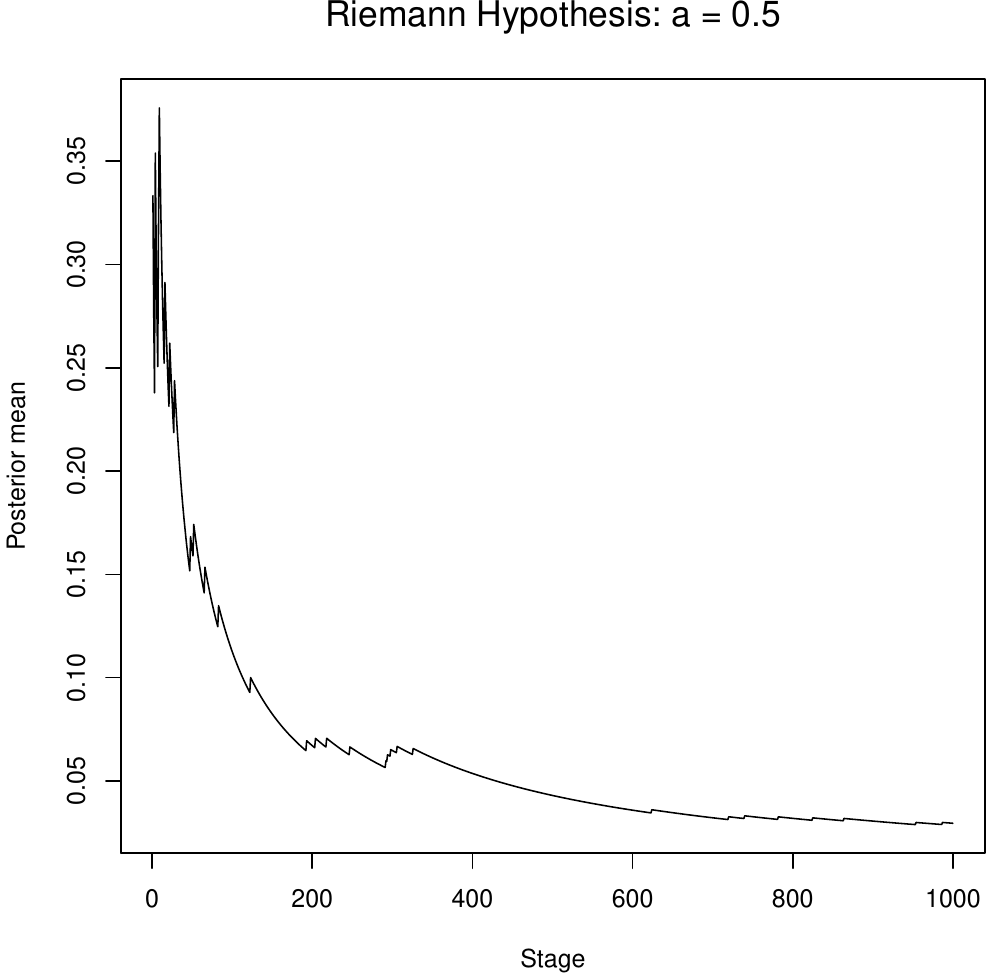}}
\hspace{2mm}
\subfigure [Divergence: $a=0.6$.]{ \label{fig:RH_a_06}
\includegraphics[width=6cm,height=5cm]{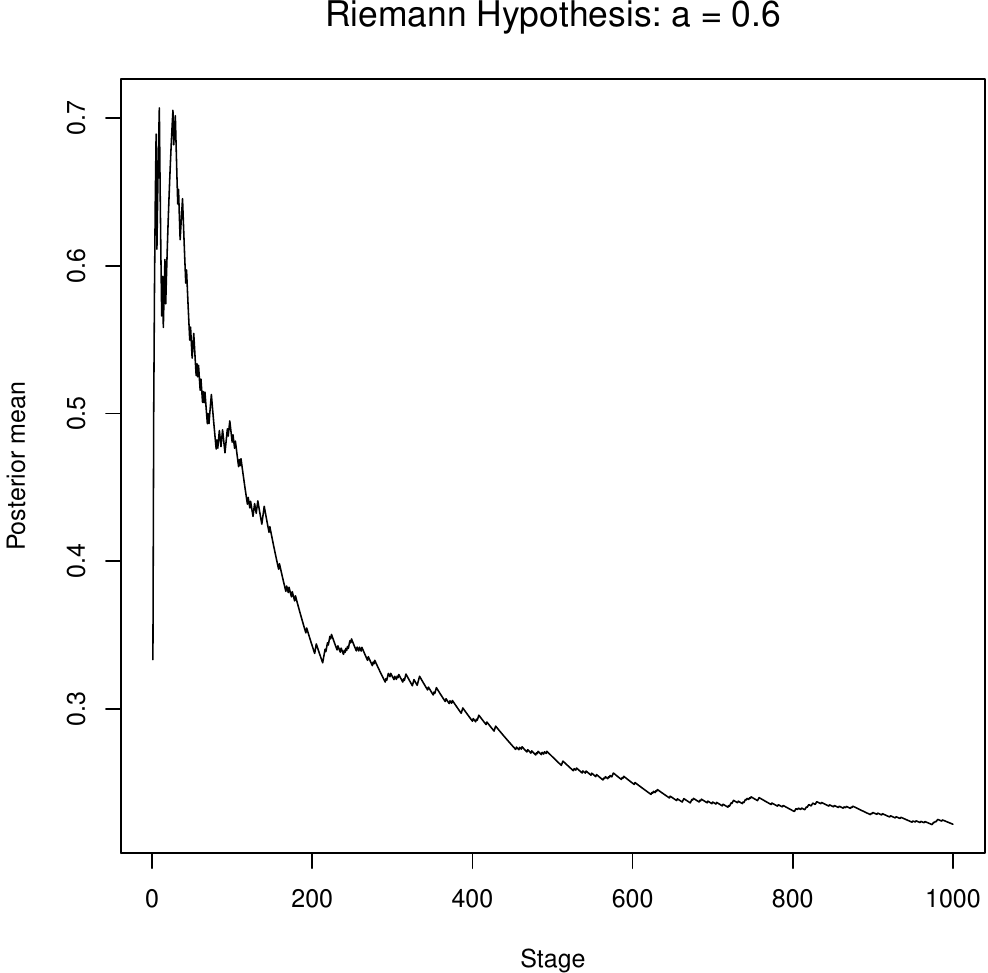}}
\caption{Riemann Hypothesis: The M\"{o}bius function based series diverges for 
$a=0.1$, $0.2$, $0.3$, $0.4$, $0.5$, $0.6$.}
\label{fig:RH_1}
\end{figure}

\begin{figure}
\centering
\subfigure [Divergence: $a=0.7$.]{ \label{fig:RH_a_07}
\includegraphics[width=6cm,height=5cm]{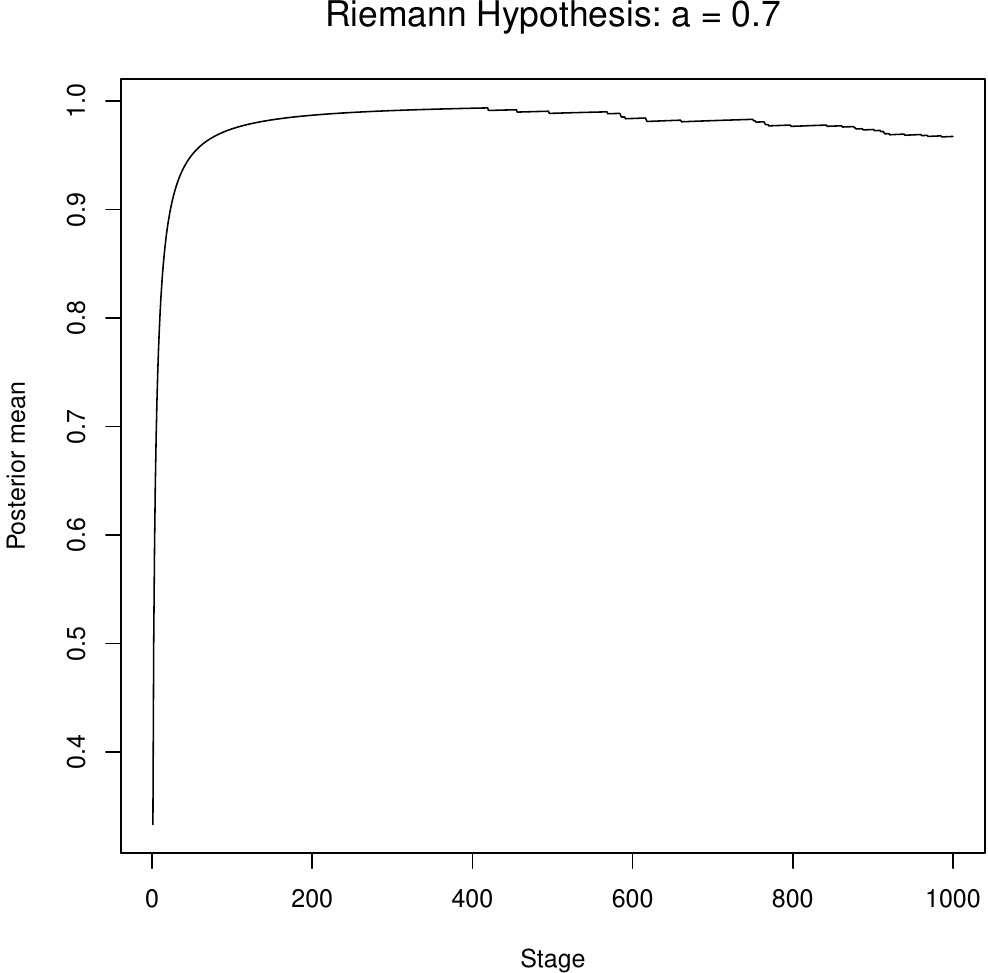}}
\hspace{2mm}
\subfigure [Convergence: $a=0.8$.]{ \label{fig:RH_a_08}
\includegraphics[width=6cm,height=5cm]{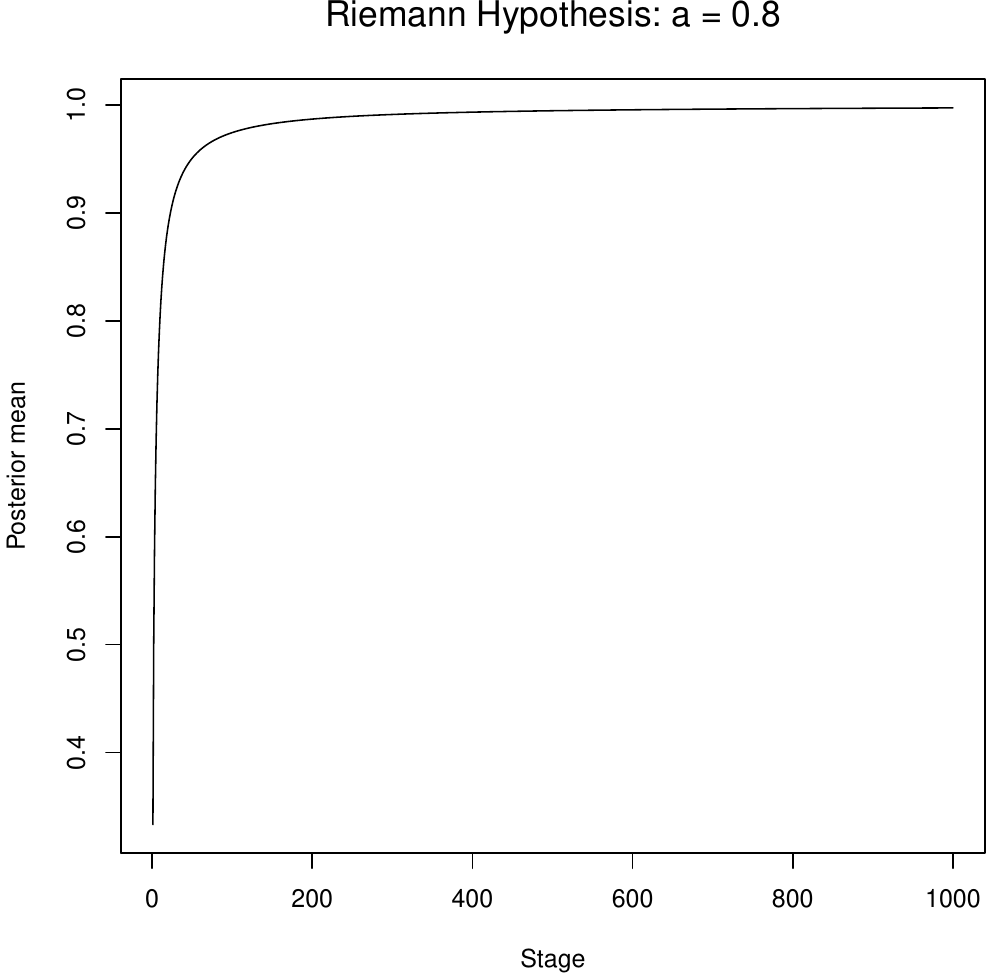}}\\
\subfigure [Convergence: $a=0.9$.]{ \label{fig:RH_a_09}
\includegraphics[width=6cm,height=5cm]{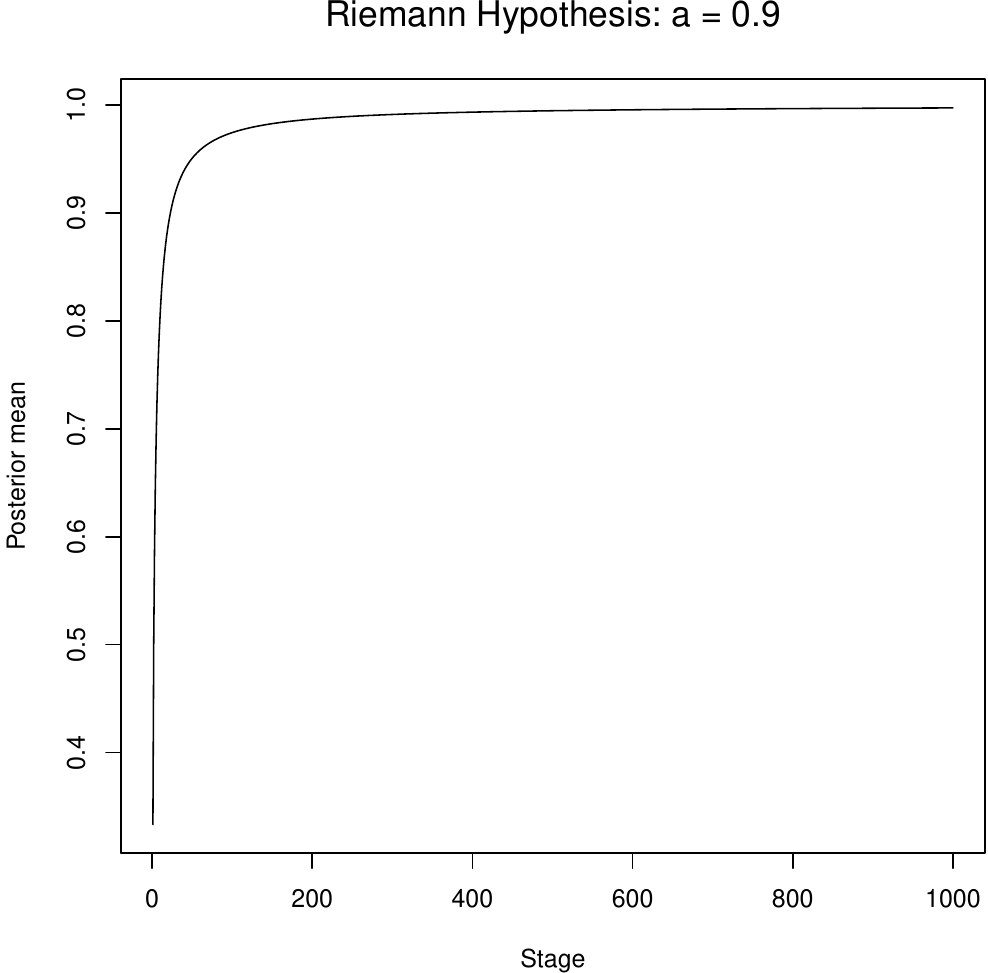}}
\hspace{2mm}
\subfigure [Convergence: $a=1+10^{-10}$.]{ \label{fig:RH_a_1_e}
\includegraphics[width=6cm,height=5cm]{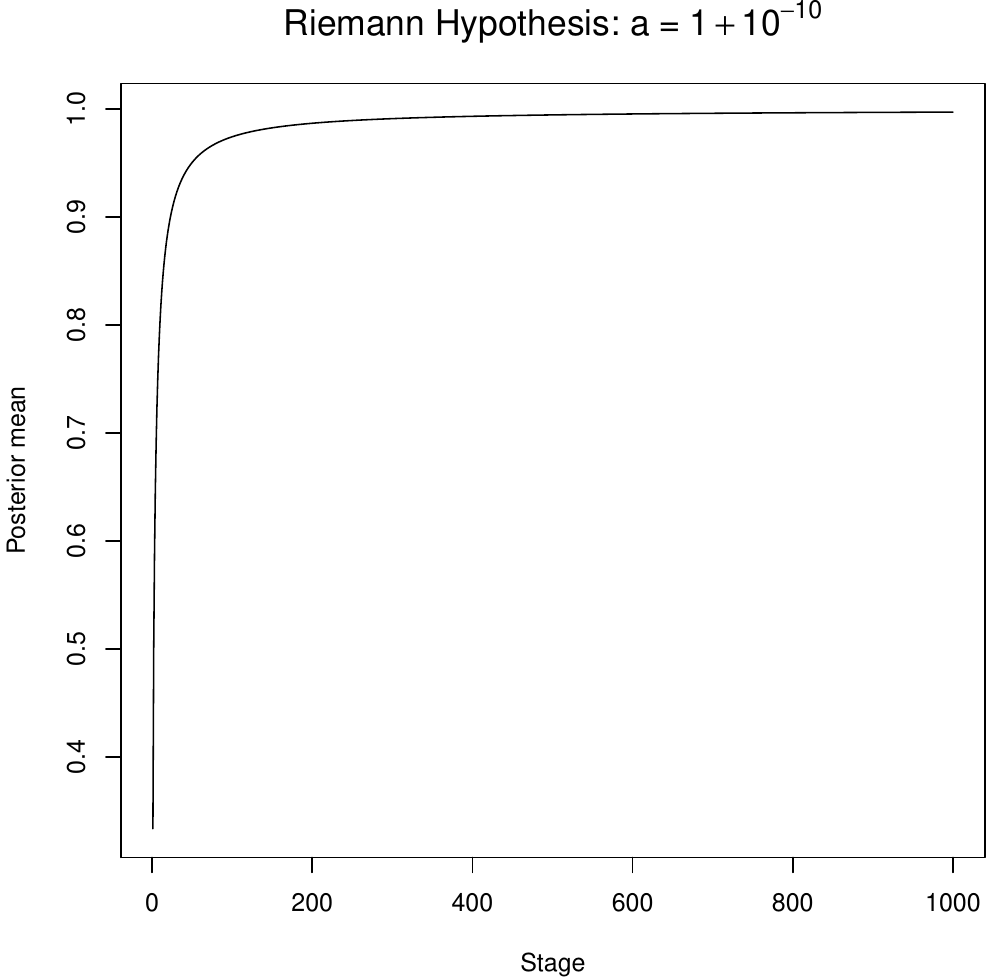}}\\
\subfigure [Convergence: $a=2$.]{ \label{fig:RH_a_2}
\includegraphics[width=6cm,height=5cm]{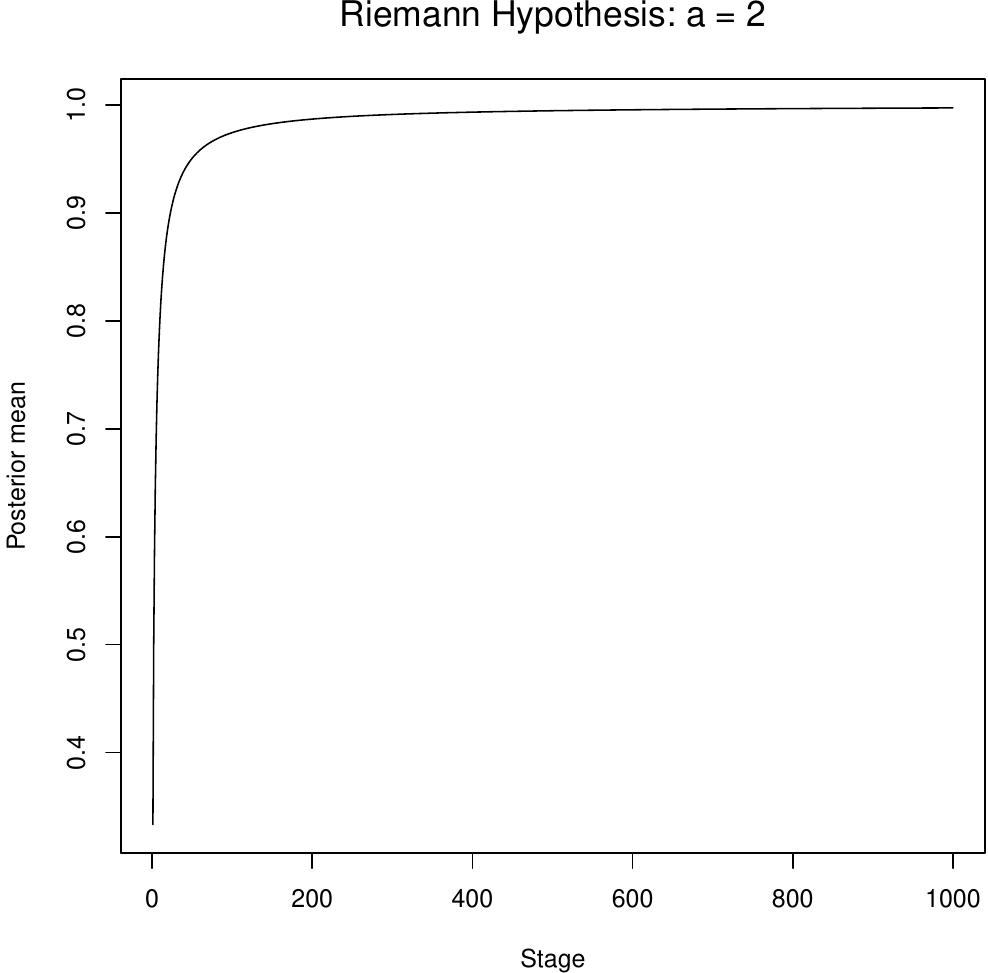}}
\hspace{2mm}
\subfigure [Convergence: $a=3$.]{ \label{fig:RH_a_3}
\includegraphics[width=6cm,height=5cm]{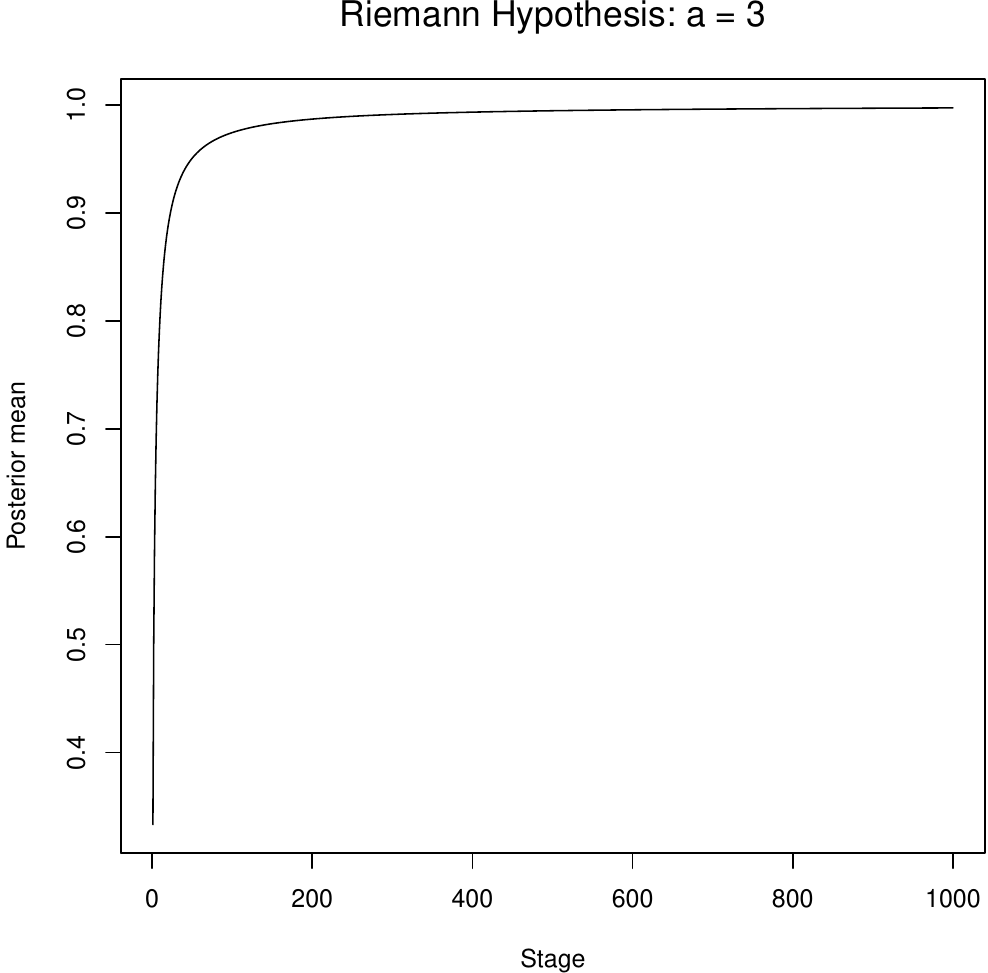}}
\caption{Riemann Hypothesis: The M\"{o}bius function based series diverges for 
$a=0.7$ but converges for $a=0.8$, $0.9$, $1+10^{-10}$, $2$, $3$.}
\label{fig:RH_2}
\end{figure}

\begin{figure}
\centering
\subfigure [Divergence: $a=0.71$.]{ \label{fig:RH_a_071}
\includegraphics[width=6cm,height=5cm]{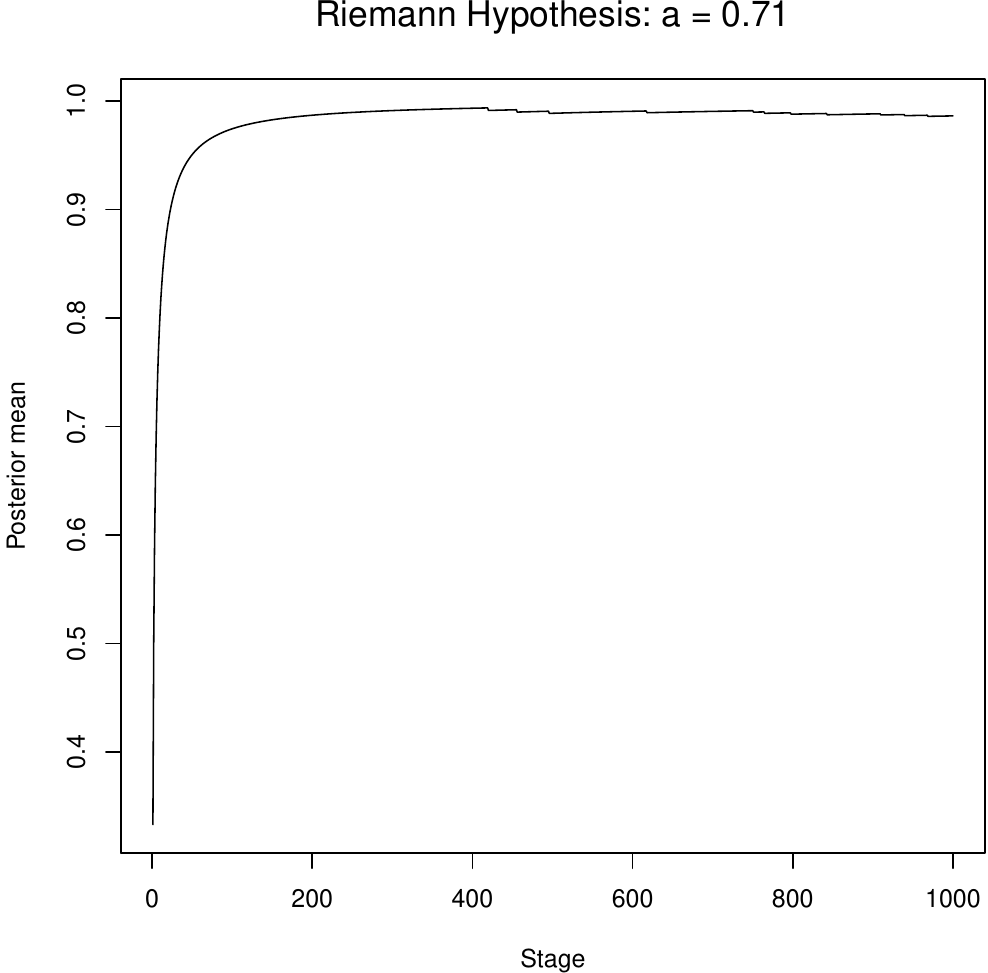}}
\hspace{2mm}
\subfigure [Divergence: $a=0.71$.]{ \label{fig:RH_a_071_partial}
\includegraphics[width=6cm,height=5cm]{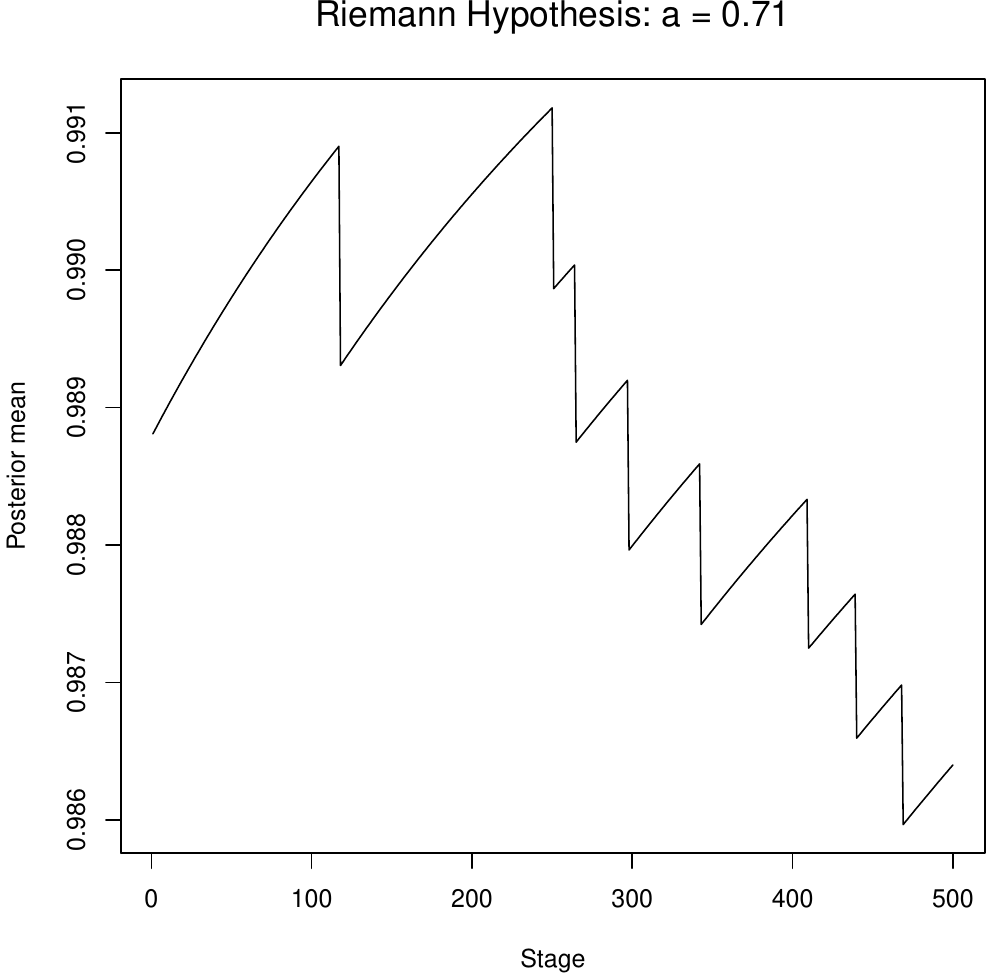}}\\
\subfigure [Divergence: $a=0.715$.]{ \label{fig:RH_a_0715}
\includegraphics[width=6cm,height=5cm]{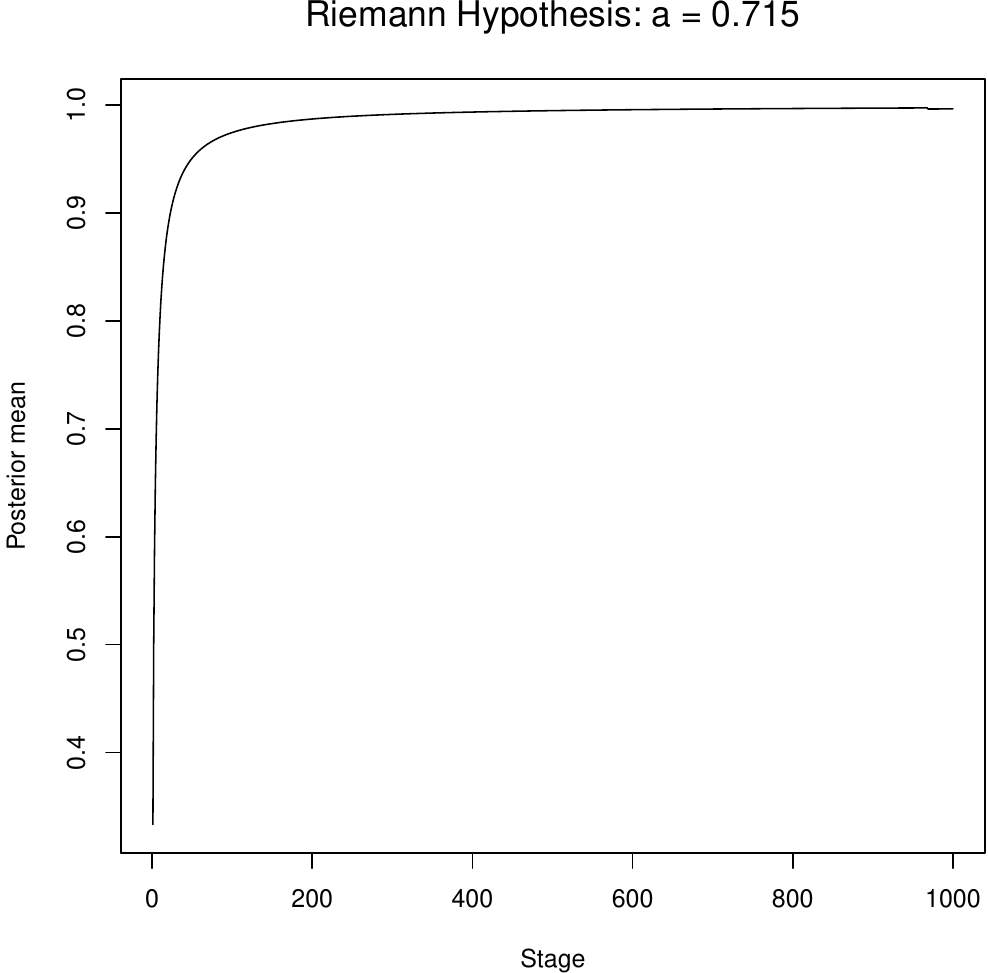}}
\hspace{2mm}
\subfigure [Divergence: $a=0.715$.]{ \label{fig:RH_a_0715_partial}
\includegraphics[width=6cm,height=5cm]{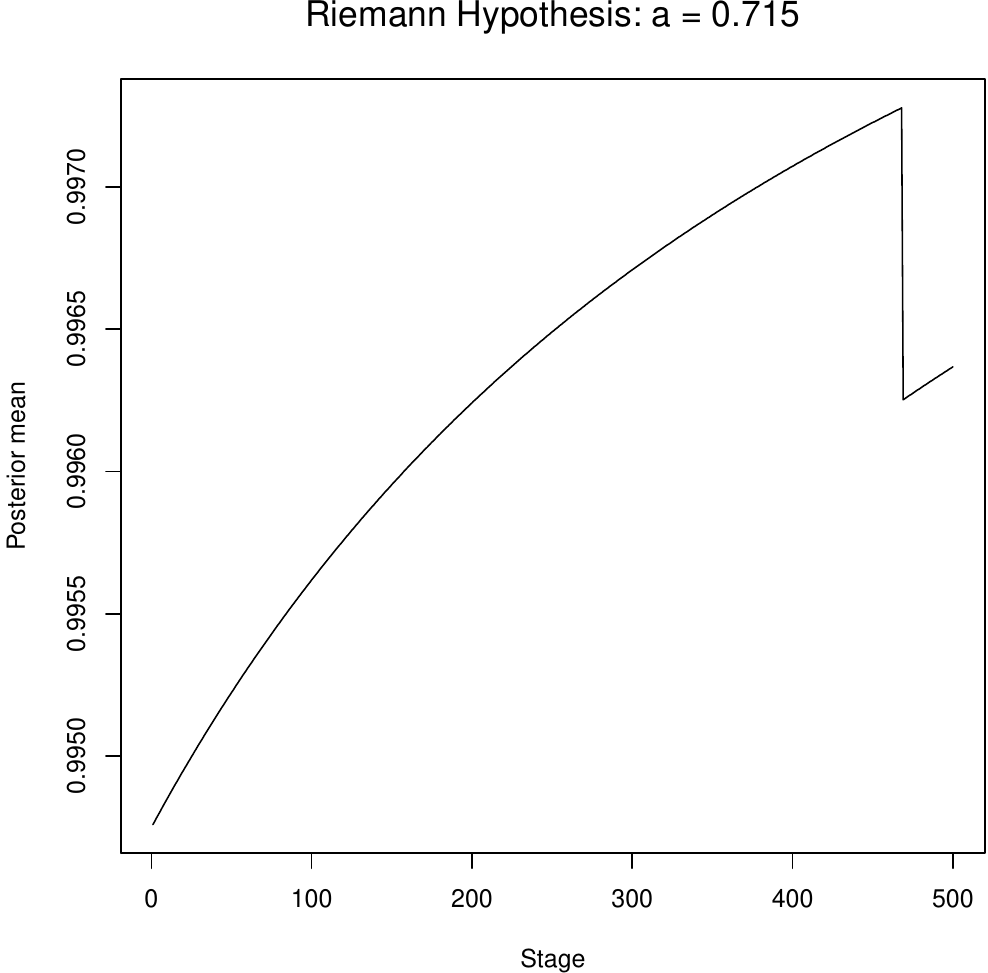}}\\
\subfigure [Convergence: $a=0.72$.]{ \label{fig:RH_a_072}
\includegraphics[width=6cm,height=5cm]{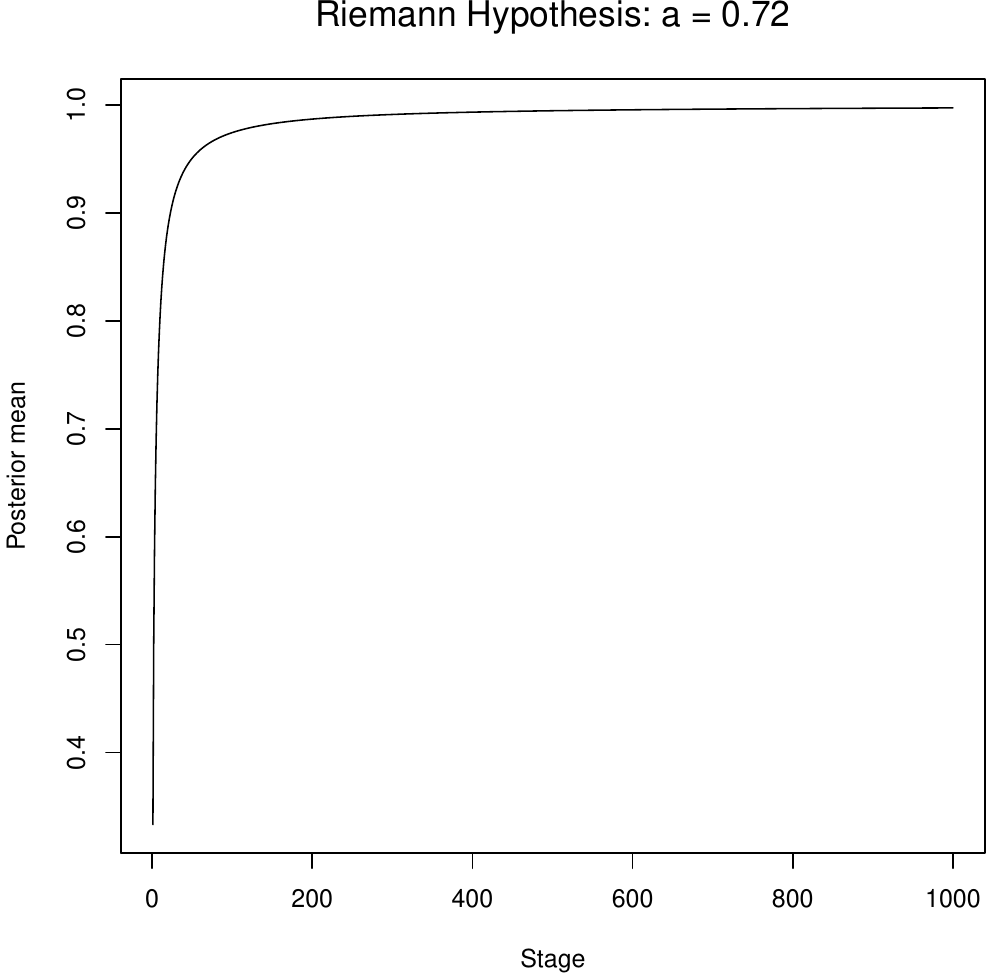}}
\hspace{2mm}
\subfigure [Convergence: $a=0.72$.]{ \label{fig:RH_a_072_partial}
\includegraphics[width=6cm,height=5cm]{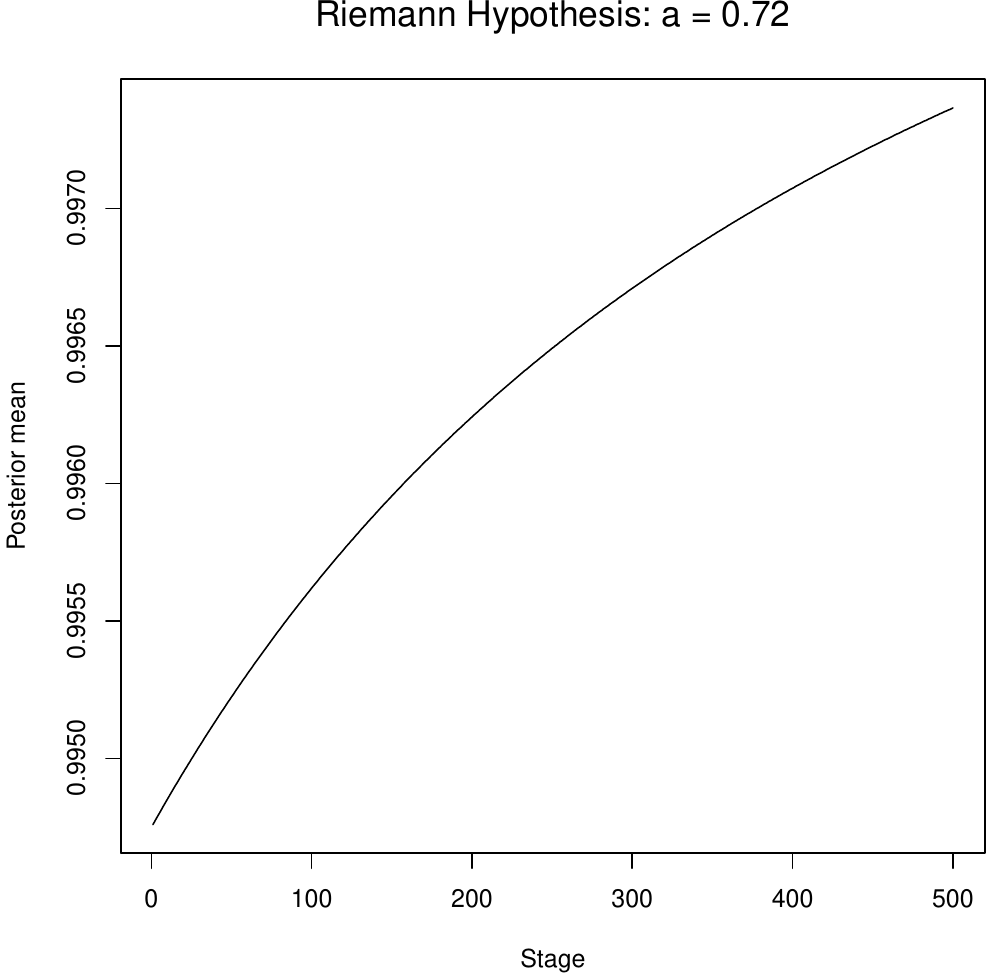}}
\caption{Riemann Hypothesis: The left panels show the posterior means for the full set of iterations,
while the right panels depict the posterior means for the last $500$ iterations, for $a=0.71$, $0.715$
and $0.72$. It is evident that the M\"{o}bius function based series diverges for $a=0.71$ and $0.715$ 
but converges for $a=0.72$.}
\label{fig:RH_3}
\end{figure}

\section{Summary and conclusion}
\label{sec:conclusion}
In this paper, we proposed and developed a novel Bayesian methodology for assessment of convergence
of infinite series; we further extended the theory to enable detection
of multiple or even infinite number of limit points of the underlying infinite series. 
Our developments do not require any restrictive assumption, not even independence of the elements
$X_i$ of the infinite series.

We demonstrated the reliability and efficiency of our
methods with varieties of examples, the most important one being associated with 
Riemann Hypothesis. 

Both methods proposed in this paper, namely the convergence assessment method and the
multiple limit points method are almost completely in agreement that the Riemann Hypothesis
can not be completely supported. Indeed, both the methods agree that there exists some
$a^*$ in the neighborhood of $0.7$ such that the infinite series based on the M\"{o}bius function
diverges for $a<a^*$ and converges for $a\geq a^*$. The results that we obtained by our Bayesian
analyses are also supported by informal plots of the partial sums depicted in Figure \ref{fig:plot_partial_sums}.
Further support of our Riemann hypothesis results can be obtained by exploiting the 
characterization of Riemann hypothesis by convergence of certain infinite series based on Bernoulli numbers;
the details are presented in Section S-6 of the supplement.

In fine, it is worth reminding the reader that although our work attempts to provide 
insights regarding Riemann hypothesis, 
we did not develop our Bayesian approach keeping Riemann hypothesis in mind. Indeed,
our primary objective is to develop Bayesian approaches to studying convergence properties of infinite series in general.
From this perspective, Riemann hypothesis is just an example where it makes sense to learn about convergence properties of 
a certain class of infinite series. Further development of our approach is of course in the cards.
Note that the theory that we developed for deterministic series remains valid for random series as well, but since the forms of the terms of random series are unknown, 
direct application of our methods is not possible. We are currently developing new theories and methods for random series where the terms have unknown distributions and/or not independent. 
We shall carry out a detailed investigation including
comparisons with existing theories on random infinite series. We then intend to extend these works
to complex infinite series, both deterministic and random.

\section*{Acknowledgment}
We thank Arun Kumar Kuchibhotla and Debapratim Banerjee for their very useful feedback
on the first draft of this paper.

\newpage

\renewcommand\thefigure{S-\arabic{figure}}
\renewcommand\thetable{S-\arabic{table}}
\renewcommand\thesection{S-\arabic{section}}

\setcounter{section}{0}
\setcounter{figure}{0}
\setcounter{table}{0}

\begin{center}
{\bf \Large Supplementary Material}
\end{center}

\section{Proof of Lemma 5.1}
\label{sec:proof_lemma5.1}
Since each term of the series (1) 
is decreasing in $a$, it is clear that $S^{a,b}_{j,n}$
is decreasing in $a$. We need to show that $S^{a,b}_{j,n}$ is increasing in $b$.  

Let, for $i\geq 3$,
\begin{equation}
g(i)=\left(1-\frac{\log i}{i}-\frac{\log\log i}{i}
\left\{\cos^2\left(\frac{1}{i}\right)\right\}\left(a+(-1)^ib\right)\right)^i.
\label{eq:g_example4}
\end{equation}

Observe that all our partial sums of the form
$S^{a,b}_{j,n}$ for $j\geq 3$ admit the form 
\begin{equation}
S^{a,b}_{j,n}=\sum_{i=r}^{r+n-1}g(i), 
\label{eq:S_partial_example4}
\end{equation}
where $r=3+n(j-1)$, which is clearly odd because $n$ is even.
Now,
\begin{equation}
\sum_{i=r}^{r+n-1}g(i)=\left\{g(r)+g(r+1)\right\}+\left\{g(r+2)+g(r+3)\right\}+\cdots +\left\{g(r+n-2)+g(r+n-1)\right\},
\label{eq:sum_example4}
\end{equation}
where the sums of the consecutive terms within the parentheses have the form
\begin{align}
&g(r+\ell)+g(r+\ell+1)\notag\\
&=\left(1-\frac{\log (r+\ell)}{r+\ell}-\frac{\log\log (r+\ell)}{r+\ell}
\left\{\cos^2\left(\frac{1}{r+\ell}\right)\right\}\left(a+(-1)^{(r+\ell)}b\right)\right)^{(r+\ell)}\notag\\
&\quad\quad+\left(1-\frac{\log (r+\ell+1)}{r+\ell+1}-\frac{\log\log (r+\ell+1)}{r+\ell+1}
\left\{\cos^2\left(\frac{1}{r+\ell+1}\right)\right\}\left(a+(-1)^{(r+\ell+1)}b\right)\right)^{(r+\ell+1)}.
\label{eq:pairwise_sum_example4}
\end{align}
Since $r$ is odd, and since the terms are represented pairwise in (\ref{eq:sum_example4}) it follows that 
in (\ref{eq:pairwise_sum_example4}), $r+\ell$ is odd and $r+\ell+1$ is even. That is, in (\ref{eq:pairwise_sum_example4}),
$a+(-1)^{(r+\ell)}b=a-b$ and $a+(-1)^{(r+\ell+1)}b=a+b$.
Since $\cos^2\left(\theta\right)$ is decreasing on $\left[0,\frac{\pi}{2}\right]$, and since
$\frac{1}{i}\leq \frac{\pi}{2}$ for $i\geq 3$, it follows that $\cos^2\left(\frac{1}{i}\right)$ is increasing in $i$.
Moreover, $\frac{\log\log i}{i}$ decreases in $i$ at a rate faster
than $\cos^2\left(\frac{1}{i}\right)$ increases, so that 
$\frac{\log\log i}{i}\times\cos^2\left(\frac{1}{i}\right)$ decreases in $i$.
It follows that 
\begin{equation}
\frac{\log\log (r+\ell)}{r+\ell}\cos^2\left(\frac{1}{r+\ell}\right)
>\frac{\log\log (r+\ell+1)}{r+\ell+1}\cos^2\left(\frac{1}{r+\ell+1}\right).
\label{eq:compare}
\end{equation}
Note that in $g(r+\ell)+g(r+\ell+1)$, $\frac{\log\log (r+\ell)}{r+\ell}\cos^2\left(\frac{1}{r+\ell}\right)$
is associated with $-b$ while $\frac{\log\log (r+\ell+1)}{r+\ell+1}\cos^2\left(\frac{1}{r+\ell+1}\right)$
involves $b$. Hence, increasing $b$ increases $g(r+\ell)$ but decreases $g(r+\ell+1)$,
and because of (\ref{eq:compare}), $g(r+\ell)+g(r+\ell+1)$ increases in $b$. 
This ensures that $\sum_{i=r}^{r+n-1}g(i)$,
given by (\ref{eq:sum_example4}), is increasing in $b$. In other words, partial sums of the form 
(\ref{eq:S_partial_example4}) are
increasing in $b$, proving Lemma 5.1 
when $n$ is even.

\section{Further examples on detection of series convergence and divergence using our Bayesian method}
\subsection{Example 5}
\label{subsec:example5}
Now consider the following series presented and analysed in \ctn{Bou12}:
\begin{equation}
S=\sum_{i=3}^{\infty}\left(1-\left(\frac{\log(i)}{i}\right)
\left(a\left(1+\sin^2\left(\sqrt{\left(\frac{\log\left(\log(i)\right)}{\log(i)}\right)}\right)\right)
+b\sin\left(\frac{i\pi}{4}\right)\right)\right)^i;~a>0,b>0.
\label{eq:example5}
\end{equation}
\ctn{Bou12} show that the series converges when $a-b>1$ and diverges when $a+b<1$.
Again, as in the case of Example 4, the following lemma holds in Example 5.
Note that for mathematical convenience we consider partial sums from the $5$-th term onwards.
We also assume $n$ to be a multiple of $4$.
\begin{lemma}
\label{lemma:example5}
For the series (\ref{eq:example5}), 
let 
\begin{equation}
S^{a,b}_{j,n}=\sum_{i=5+n(j-1)}^{5+nj-1}\left(1-\left(\frac{\log(i)}{i}\right)
\left(a\left(1+\sin^2\left(\sqrt{\left(\frac{\log\left(\log(i)\right)}{\log(i)}\right)}\right)\right)
+b\sin\left(\frac{i\pi}{4}\right)\right)\right)^i,
\label{eq:S_example5}
\end{equation}
for $j\geq 1$ and $n$, a multiple of $4$. Then $S^{a,b}_{j,n}$ is decreasing in $a$ and increasing in $b$.
\end{lemma}
\begin{proof}
That $S^{a,b}_{j,n}$ is decreasing in $a$ follows trivially since
each term of (\ref{eq:example5}) is decreasing in $a$. We need to show that $S^{a,b}_{j,n}$ is increasing in $b$.  

Let, for $i\geq 5$,
\begin{equation}
g(i)=\left(1-\left(\frac{\log(i)}{i}\right)
\left(a\left(1+\sin^2\left(\sqrt{\left(\frac{\log\left(\log(i)\right)}{\log(i)}\right)}\right)\right)
+b\sin\left(\frac{i\pi}{4}\right)\right)\right)^i.
\label{eq:g_example5}
\end{equation}
Now note that, with $r=5+n(j-1)$,
\begin{align}
\sum_{i=r}^{r+n-1}g(i)&=\sum_{m=1}^{\frac{n}{4}}Z_{r,m}\notag\\
&=\left\{Z_{r,1}+Z_{r,2}\right\}+\left\{Z_{r,3}+Z_{r,4}\right\}+
\cdots+\left\{Z_{r,\frac{n}{4}-1}+Z_{r,\frac{n}{4}}\right\},
\label{eq:sum_example5}
\end{align}
where
\begin{equation}
Z_{r,m}=\sum_{\ell=5+4(m-1)}^{5+4(m-1)+3}g(r+\ell).
\label{eq:Z_example5}
\end{equation}

Now, for any $\ell\geq 1$, observe that in $\left\{Z_{r,\ell}+Z_{r,\ell+1}\right\}$, the term
$Z_{r,\ell}$ consists of only negative signs of the sine-values, while in $Z_{r,\ell+1}$ the
corresponding signs are positive, although the magnitudes are the same. Since $\log(i)/i$ is decreasing in $i$,
it follows that $\left\{Z_{r,\ell}+Z_{r,\ell+1}\right\}$ is increasing in $b$ for $\ell\geq 1$. Hence,
it follows that (\ref{eq:sum_example5}), and $S^{a,b}_{j,n}$, defined by (\ref{eq:S_example5}),
are increasing in $b$ for $j\geq 1$ and $n$, a multiple of $4$, proving Lemma \ref{lemma:example5}.
\end{proof}

The following corollary with respect to $S^{a,b}$ again holds:
\begin{corollary}
\label{corollary:example5}
$S^{a,b}$ is decreasing in $a$ and increasing in $b$.
\end{corollary}
Thus, we follow the same method as in Example 4 to determine $c^{a,b}_{j,n}$, but we need to note 
that in this example $a>0$ and $b>0$ instead of $a\geq 0$ and $b\geq 0$ of Example 4.  Consequently, here we define
$b\geq\epsilon$, for $\epsilon>0$, the set $A_{\epsilon}$ given by 
$ A_{\epsilon}=\left\{a:0\leq a\leq 1\right\}\cup\left\{a:a\geq 1+\epsilon\right\}$
and 
\begin{equation}
\tilde S=\underset{a\in A_{\epsilon}}{\inf}\underset{b\geq \epsilon}{\sup}~\left\{S^{a,b}:a-b>1\right\}.
\label{eq:tilde_S2}
\end{equation}
In this case, Corollary \ref{corollary:example5} and the convergence
criterion $a-b>1$ ensure that $\tilde S$ is attained at $a_0=1+\epsilon$ and $b_0=\epsilon$.
As before, we set $\epsilon=10^{-10}$.
The rest of the arguments leading to the choice of $c^{a,b}_{j,n}$ remains the same as in Example 4, 
and hence in this example $c^{a,b}_{j,n}$ has the form 
\begin{equation}
c^{a,b}_{j,n}=\left\{\begin{array}{ccc} u^{a,b}_{j,n}, &\mbox{if}~u^{a,b}_{j,n}>0;\\
S^{a_0,b_0}_{j,n}, & \mbox{otherwise},\end{array}\right.
\label{eq:example4_c_supp}
\end{equation}
with $a_0=1+10^{-10}$, $b_0=10^{-10}$, where $S^{a_0,b_0}_{j,n}$ is decreasing in $j$ as before.

Figure \ref{fig:example5} depicts the results of our Bayesian analysis of the series (\ref{eq:example5}) for
various values of $a$ and $b$. All the results are in accordance with those of \ctn{Bou12}.
\begin{figure}
\centering
\subfigure [Convergence: $a=2,b=1$.]{ \label{fig:example5_a_2_b_1}
\includegraphics[width=6cm,height=5cm]{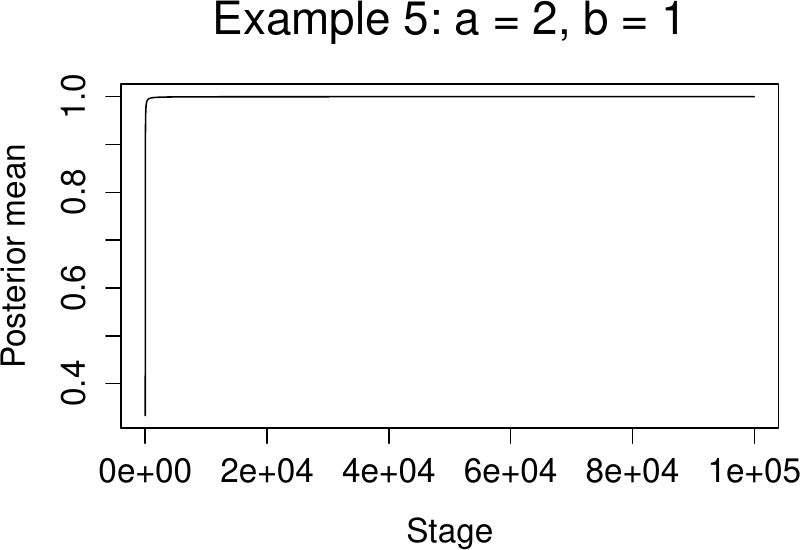}}
\hspace{2mm}
\subfigure [Convergence: $a=1+20^{-10},b=10^{-10}$.]{ \label{fig:example5_a12_b01}
\includegraphics[width=6cm,height=5cm]{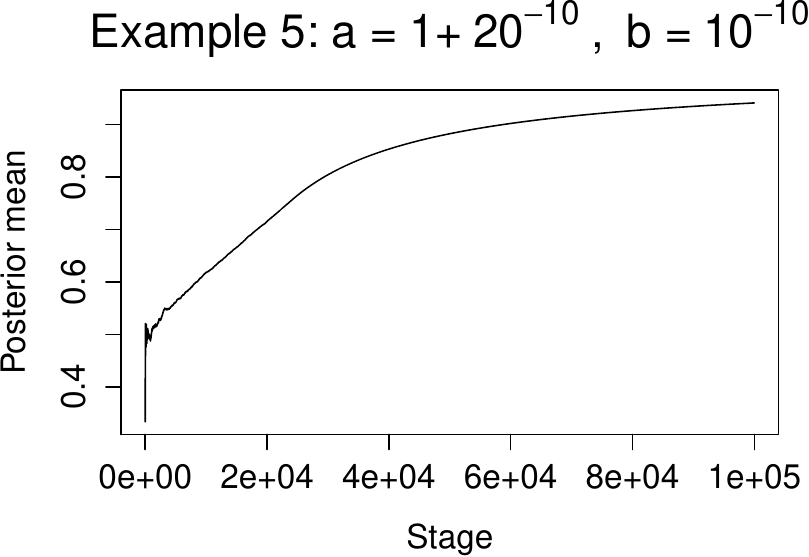}}\\
\hspace{2mm}
\subfigure [Convergence: $a=1+30^{-10},b=20^{-10}$.]{ \label{fig:example5_a13_b02}
\includegraphics[width=6cm,height=5cm]{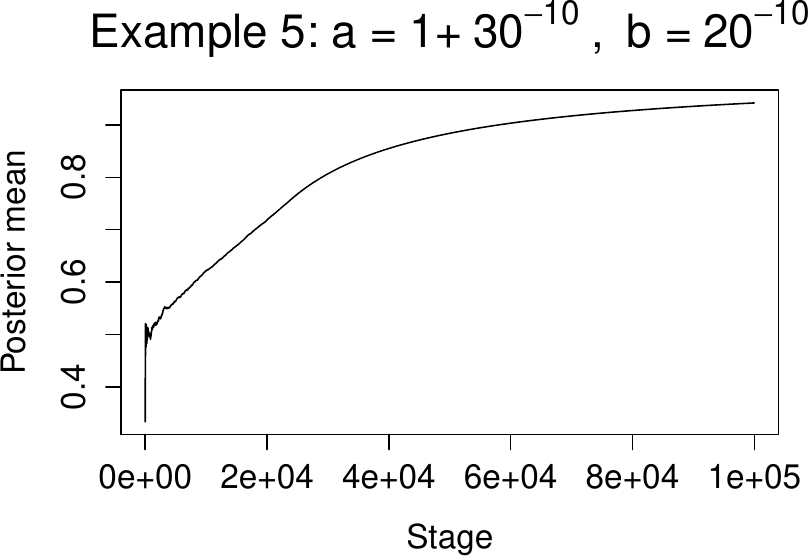}}
\hspace{2mm}
\subfigure [Divergence: $a=1/2,b=1/2$.]{ \label{fig:example5_a_1_2_b_1_2}
\includegraphics[width=6cm,height=5cm]{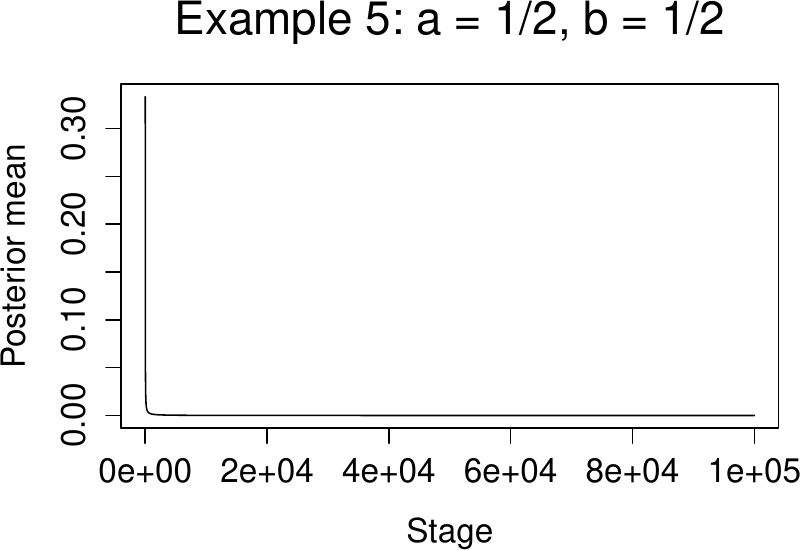}}\\
\subfigure [Divergence: $a=\frac{1}{2}\left(1-10^{-11}\right),
b=\frac{1}{2}\left(1-10^{-11}\right)$.]{ \label{fig:example5_a+b_less_1}
\includegraphics[width=6cm,height=5cm]{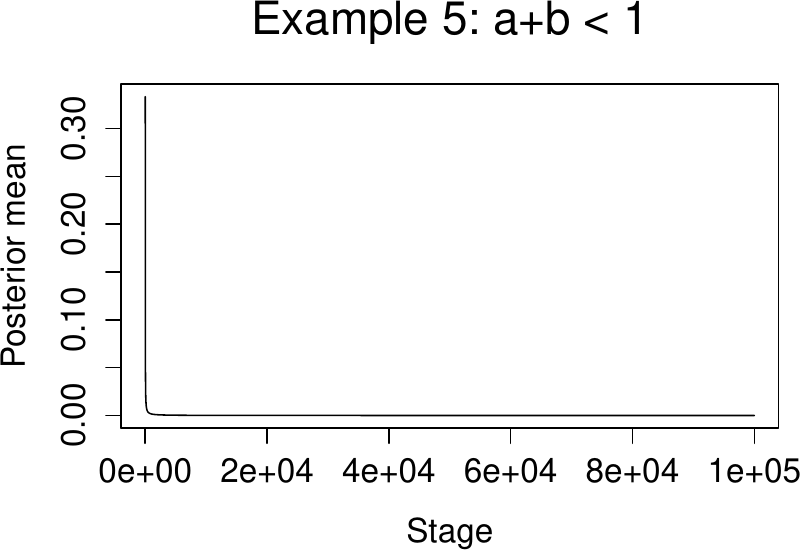}}
\caption{Example 5: The series (\ref{eq:example5}) converges for $(a=2,b=1)$, $(a=1+20^{-10},b=10^{-10})$,
$(a=1+30^{-10},b=20^{-10})$ and diverges for $(a=1/2,b=1/2)$ and 
$\left(a=\frac{1}{2}\left(1-10^{-11}\right),b=\frac{1}{2}\left(1-10^{-11}\right)\right)$.}
\label{fig:example5}
\end{figure}

\subsection{Example 6}
\label{subsec:example6}
We now investigate whether or not the following series converges:
\begin{equation}
S=\sum_{i=1}^{\infty}\frac{1}{i^3|\sin i|}.
\label{eq:example6_S}
\end{equation}
This series is a special case of the generalized form of the Flint Hills series (see
\ctn{Pickover02} and \ctn{Alek11}).

For our purpose, we first embed the above series into 
\begin{equation}
S^{a,b}=\sum_{i=1}^{\infty}\frac{i^{b-3}}{a+|\sin i|},
\label{eq:example6}
\end{equation}
where $b\in\mathbb R$ and $|a|\leq\eta$, for some $\eta>0$, specified according to our
purpose. Note that, $S=S^{0,0}$, and we set $\eta=10^{-10}$ for our investigation of (\ref{eq:example6_S}).

Note that for any fixed $a\neq 0$, $S^{a,b}$ converges if $b<2$ and diverges if $b\geq 2$.
Since $S^{a,b}$ increases in $b$
it follows that the equality in
\begin{equation}
\tilde S=\sup\left\{S^{a,b}:a=\epsilon,~b\leq 2-\epsilon \right\}
\label{eq:sup_A_example6}
\end{equation}
is attained at $(a_0,b_0)=(\epsilon,2-\epsilon)$.

Arguments in keeping with those in the previous examples
lead to the following choice of the
upper bound for $S^{a,b}_{j,n}$, which we again denote by $c^{a,b}_{j,n}$:
\begin{equation}
c^{a,b}_{j,n}=\left\{\begin{array}{ccc} u^{a,b}_{j,n}, &\mbox{if}~b<2;\\
v^{a,b}_{j,n}, & \mbox{otherwise},\end{array}\right.
\label{eq:example6_c}
\end{equation}
where 
\begin{align}
u^{a,b}_{j,n}&=S^{a_0,b_0}_{j,n}+\frac{(|a|-b+2-2\epsilon+10^{-5})}{\log(j+1)}; 
\label{eq:u_example6}\\
v^{a,b}_{j,n}&=S^{a_0,b_0}_{j,n}+\frac{(|a|-b+2-2\epsilon-10^{-5})}{\log(j+1)}. 
\label{eq:v_example6}
\end{align}
It can be easily verified that the upper bound is decreasing in $j$.
Notice that we add the term $10^{-5}$ when $b<2$ so that our Bayesian method favours convergence and subtract 
the same when $b\geq 2$ to facilitate detection of divergence. Since convergence or divergence of $S^{a,b}$ 
does not depend upon $a\in [-\eta,\eta]\setminus\left\{0\right\}$, 
we use $|a|$ in (\ref{eq:u_example6}) and (\ref{eq:v_example6}). 

Setting $\epsilon=10^{-10}$, Figures \ref{fig:example6a} and \ref{fig:example6b} 
depict convergence and divergence of $S^{a,b}$ for various
values of $a$ and $b$. In particular, panel (e) of Figure \ref{fig:example6b} 
shows that our main interest, the series $S$, given by (\ref{eq:example6_S}), converges.
\begin{figure}
\centering
\subfigure [Convergence: $a=-10^{-10},b=2-10^{-10}$.]{ \label{fig:example6_a_minus_e_b_2_minus_e}
\includegraphics[width=6cm,height=5cm]{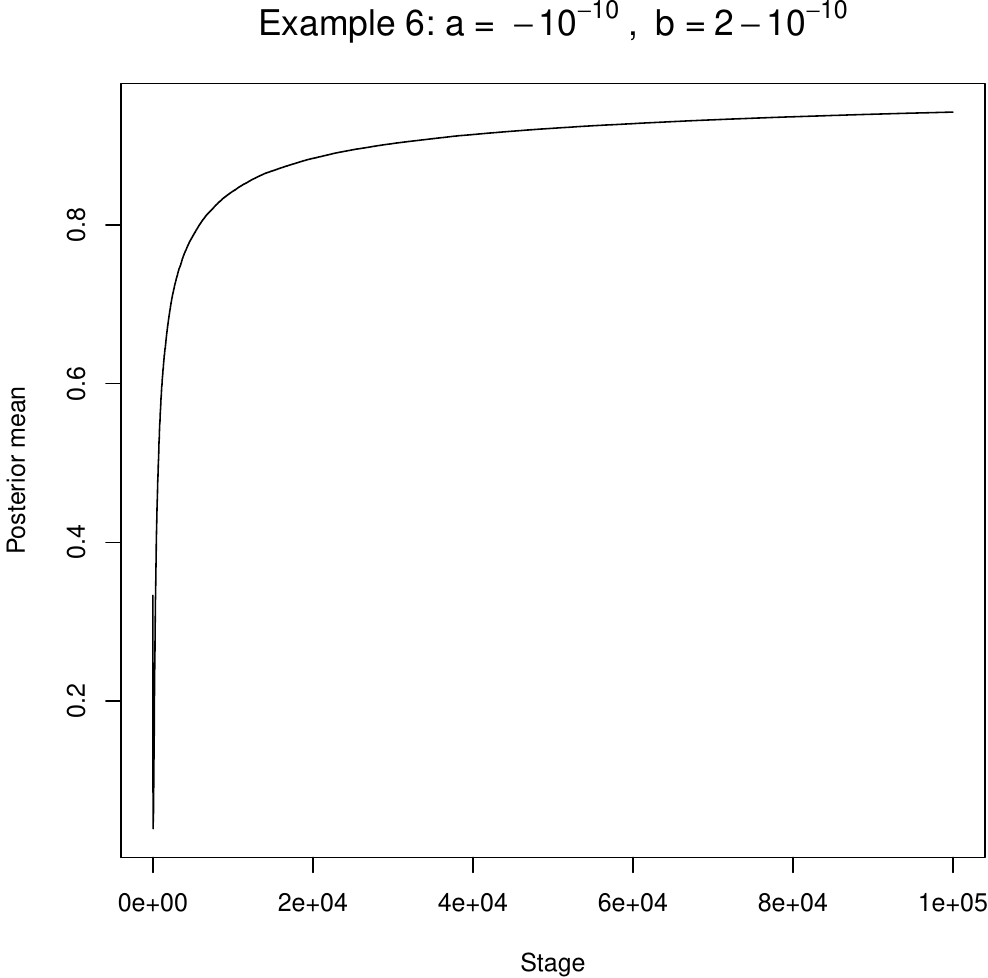}}
\hspace{2mm}
\subfigure [Divergence: $a=-10^{-10},b=2+10^{-10}$.]{ \label{fig:example6_a_minus_e_b_2_plus_e}
\includegraphics[width=6cm,height=5cm]{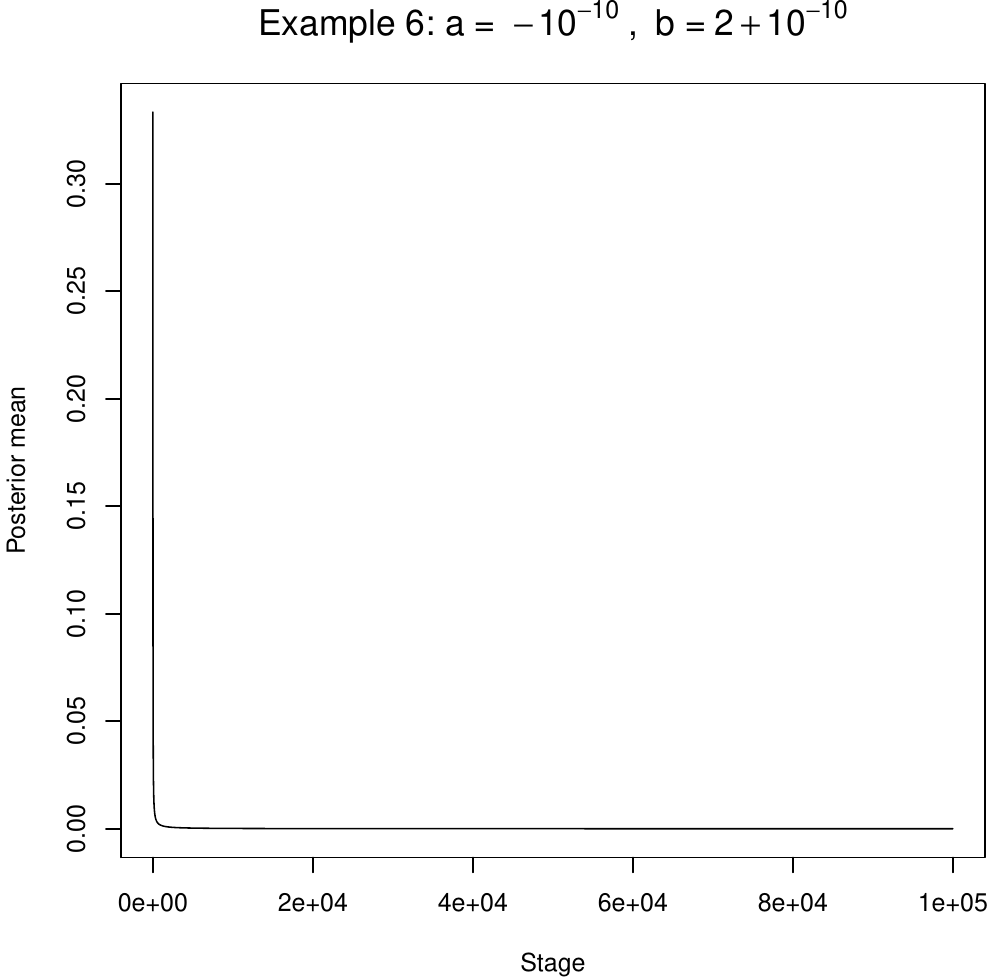}}\\
\subfigure [Convergence: $a=10^{-10},b=2-10^{-10}$.]{ \label{fig:example6_a_plus_e_b_2_minus_e}
\includegraphics[width=6cm,height=5cm]{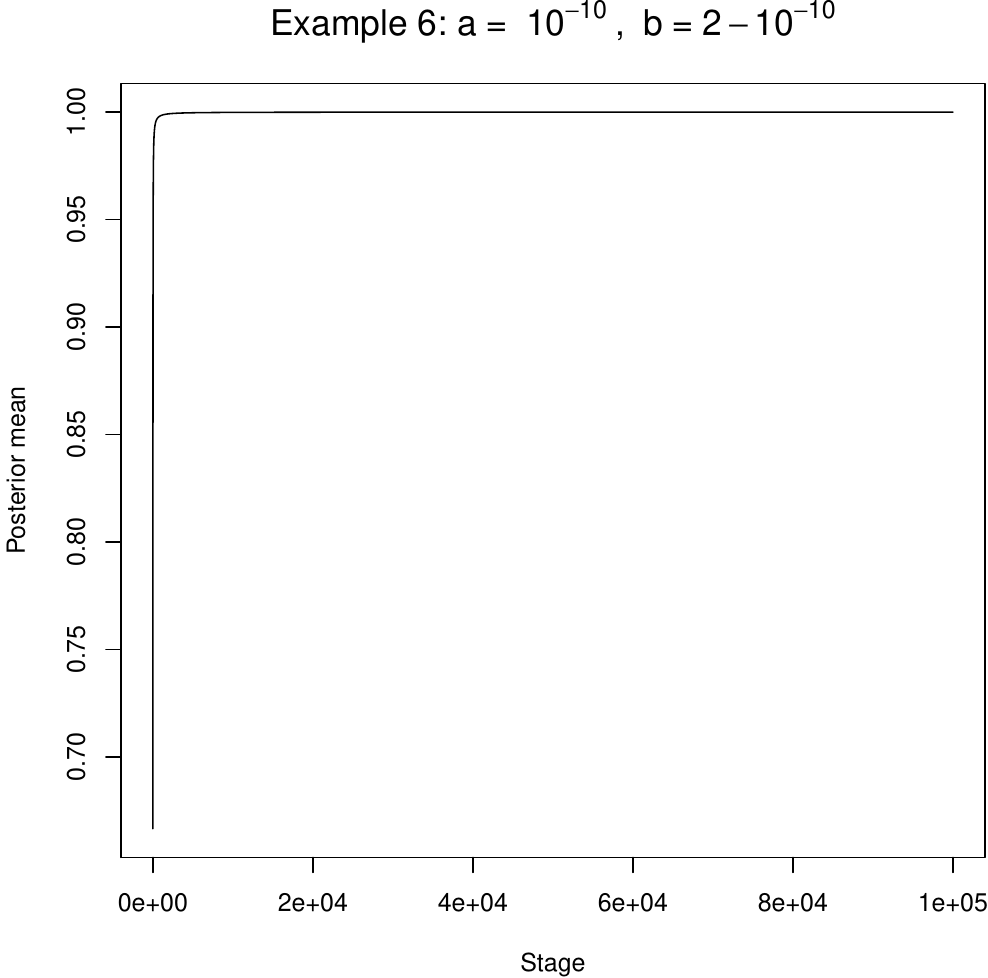}}
\hspace{2mm}
\subfigure [Divergence: $a=10^{-10},b=2+10^{-10}$.]{ \label{fig:example6_a_plus_e_b_2_plus_e}
\includegraphics[width=6cm,height=5cm]{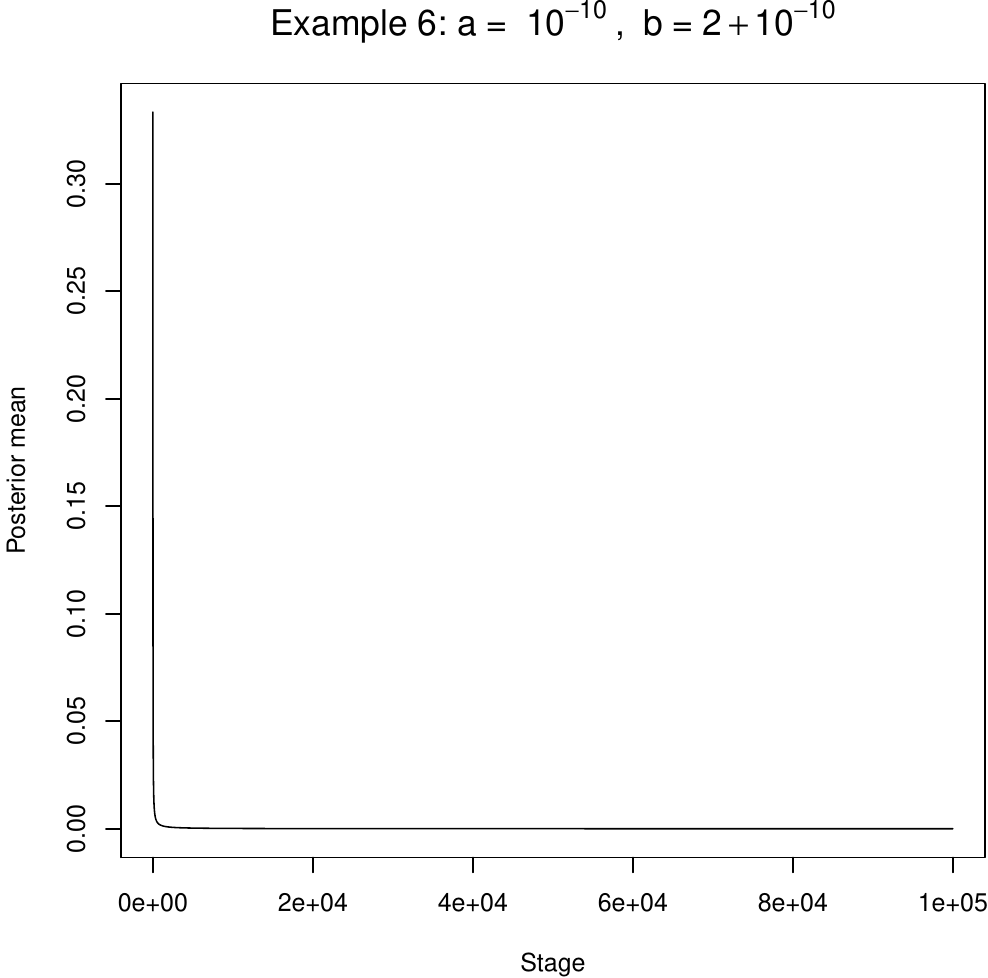}}
\caption{Example 6: The series (\ref{eq:example6}) converges for $(a=-10^{-10},b=2-10^{-10})$, 
$(a=10^{-10},b=2-10^{-10})$, and diverges for $(a=-10^{-10},b=2+10^{-10})$,
$(a=10^{-10},b=2+10^{-10})$.}
\label{fig:example6a}
\end{figure}

\begin{figure}
\centering
\subfigure [Convergence: $a=-10^{-10},b=-10^{-10}$.]{ \label{fig:example6_a_minus_e_b_minus_e}
\includegraphics[width=6cm,height=5cm]{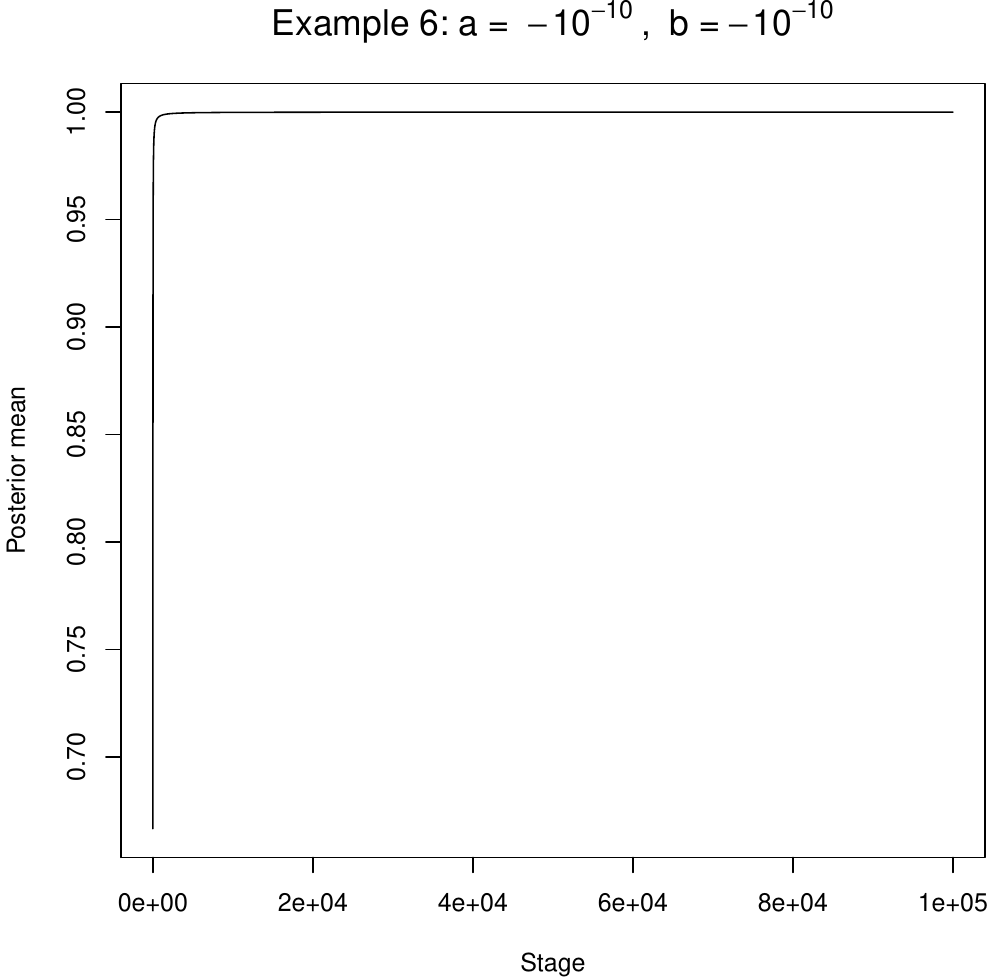}}
\hspace{2mm}
\subfigure [Convergence: $a=-10^{-10},b=10^{-10}$.]{ \label{fig:example6_a_minus_e_b_plus_e}
\includegraphics[width=6cm,height=5cm]{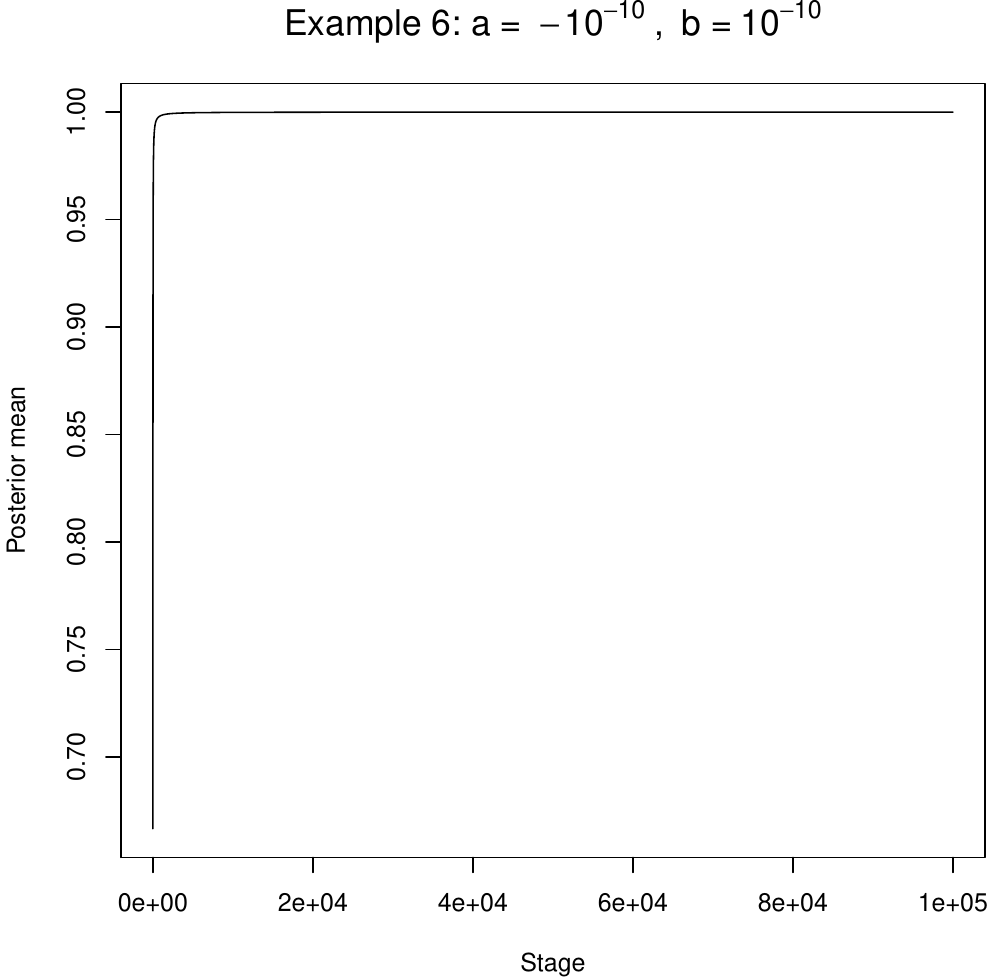}}\\
\subfigure [Convergence: $a=10^{-10},b=-10^{-10}$.]{ \label{fig:example6_a_plus_e_b_minus_e}
\includegraphics[width=6cm,height=5cm]{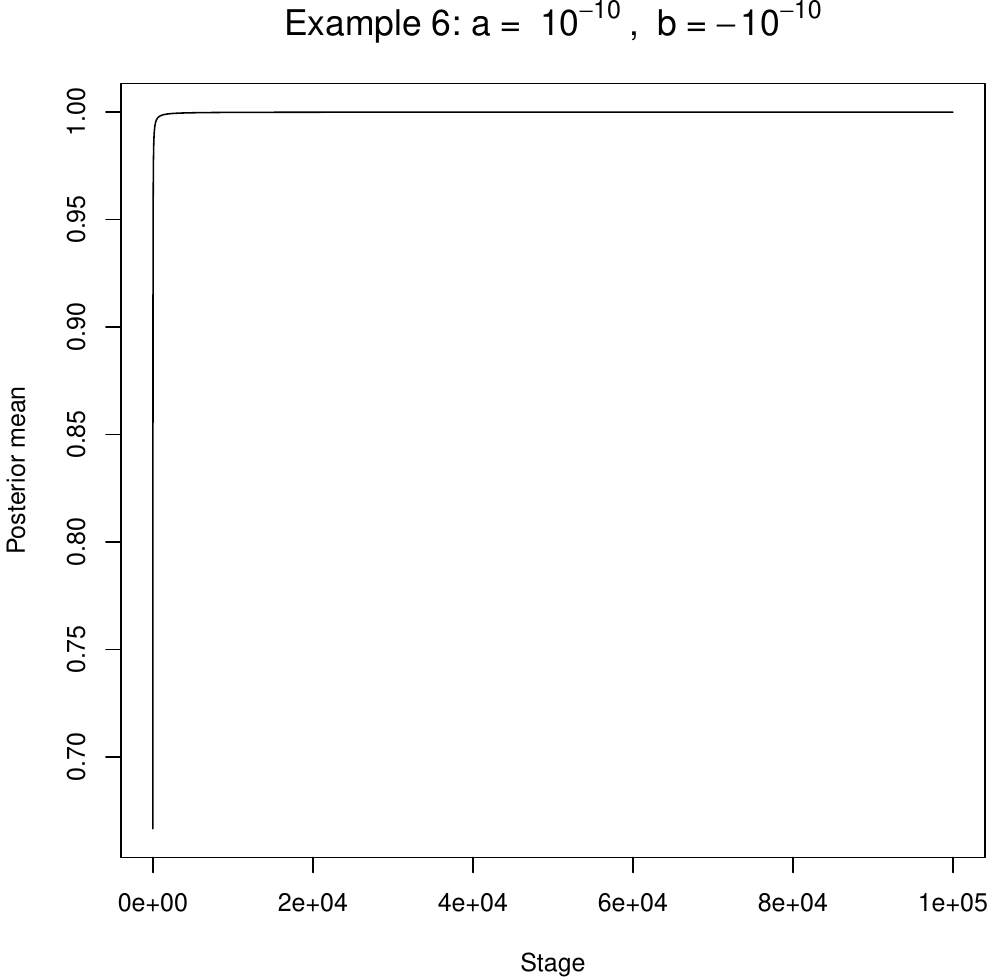}}
\hspace{2mm}
\subfigure [Convergence: $a=10^{-10},b=10^{-10}$.]{ \label{fig:example6_a_plus_e_b_plus_e}
\includegraphics[width=6cm,height=5cm]{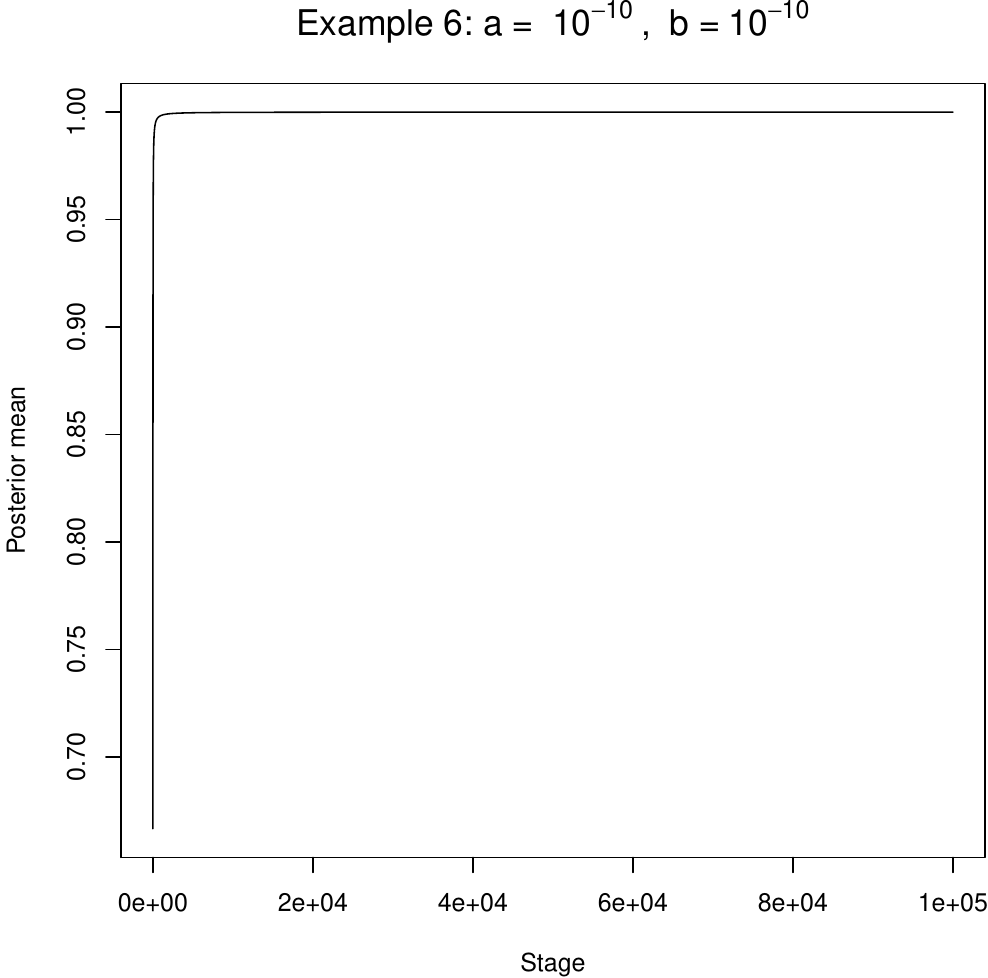}}\\
\subfigure [Convergence: $a=0,b=0$.]{ \label{fig:example6_a_0_b_0}
\includegraphics[width=6cm,height=5cm]{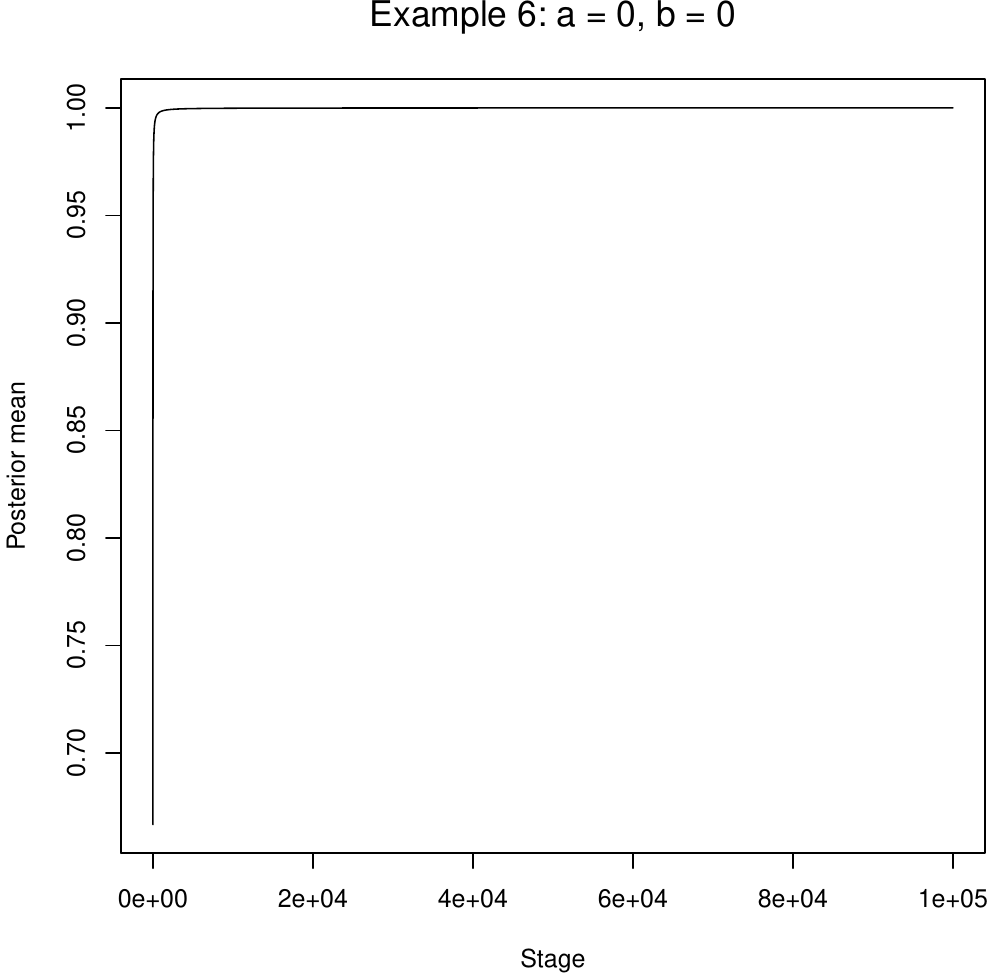}}
\caption{Example 6: The series (\ref{eq:example6}) converges for $(a=-10^{-10},b=-10^{-10})$, 
$(a=-10^{-10},b=10^{-10})$, $(a=10^{-10},b=-10^{-10})$, $(a=10^{-10},b=10^{-10})$, and
$(a=0,b=0)$.}
\label{fig:example6b}
\end{figure}

\subsection{Example 7}
\label{subsec:example7}
We now consider
\begin{equation}
S=\sum_{i=1}^{\infty}\frac{|\sin~i|^i}{i}.
\label{eq:example7}
\end{equation}
We embed this series into
\begin{equation}
S^{a,b}=\sum_{i=1}^{\infty}\frac{|\sin~a\pi i|^i}{i^b},
\label{eq:example7_embedding}
\end{equation}
where $a\in\mathbb R$ and $b\geq 1$.
The above series converges if $b>1$, for all $a\in\mathbb R$.
But for $b=1$, it is easy to see that 
the series diverges if $a=\ell/2m$, where
$\ell$ and $m$ are odd integers.

Letting $a_0=\pi^{-1}$ and $b_0=1+\epsilon$, with $\epsilon=10^{-10}$,
we set the following upper bound that is decreasing in $j$:
\begin{equation}
c^{a,b}_{j,n}=S^{a_0,b_0}_{j,n}+\frac{\epsilon}{j}.
\label{eq:example7_upper_bound}
\end{equation}
Thus, $c^{a,b}_{j,n}$ corresponds to a convergent series which is also sufficiently close
to divergence. Addition of the term $\frac{\epsilon}{j}$ provides further protection from 
erroneous conclusions regarding divergence.

Panel(a) of Figure \ref{fig:example7} demonstrates that the series of our interest, given by
(\ref{eq:example7}), diverges. Panel (b) confirms that for $a=5/(2\times 7)$ and $b=1$, the series
indeed diverges, as it should.
\begin{figure}
\centering
\subfigure [Divergence: $a=\pi^{-1},b=1$.]{ \label{fig:example7_a_pi_inv_b_1}
\includegraphics[width=6cm,height=5cm]{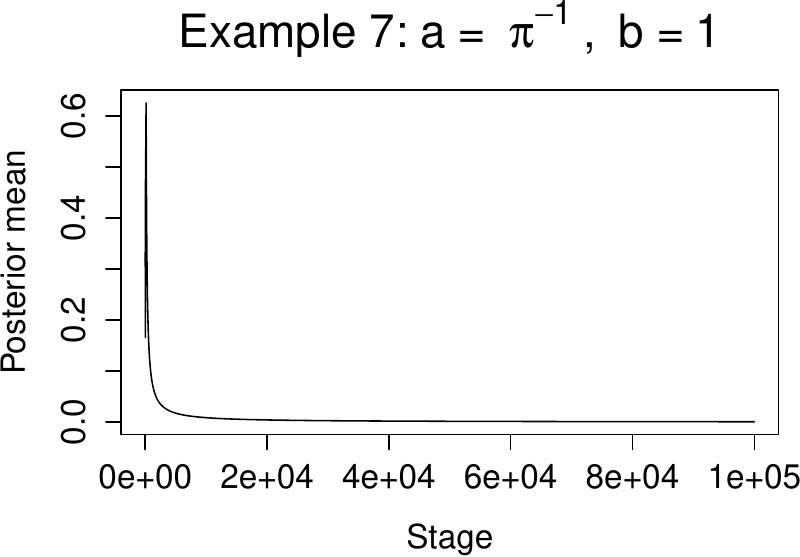}}
\hspace{2mm}
\subfigure [Divergence: $a=5/(2\times 7),b=1$.]{ \label{fig:example7_a_odd1_b_1}
\includegraphics[width=6cm,height=5cm]{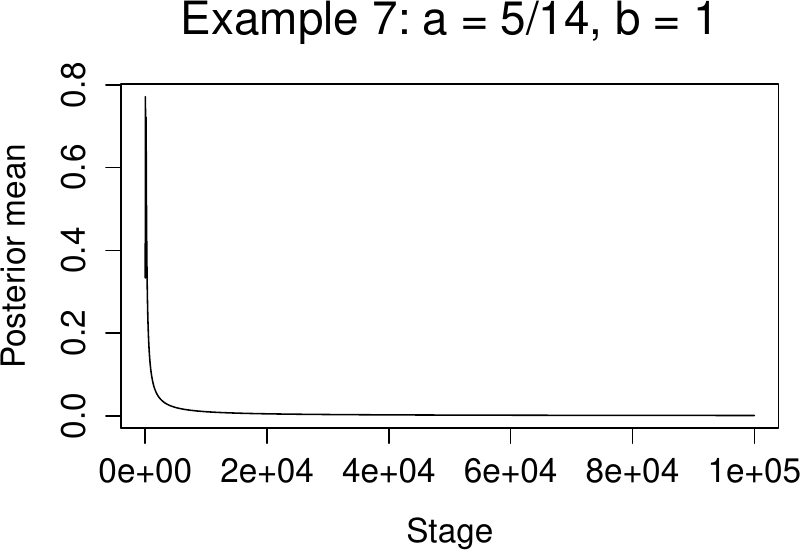}}
\caption{Example 7: The series (\ref{eq:example7_embedding}) diverges for $(a=\pi^{-1},b=1)$, 
$(a=5/7,b=1)$.} 
\label{fig:example7}
\end{figure}

\section{Oscillatory series with multiple limit points}
\label{sec:osc_mult}

In this section we assume that the sequence $\left\{S_{1,n}\right\}_{n=1}^{\infty}$ has multiple
limit points, including the possibility that the number of limit points is countably infinite.

\subsection{Finite number of limit points}
\label{eq:finite_limit_points}
Let us assume that there are $M~(>1)$ limit points of the sequence $\left\{S_{1,n}\right\}_{n=1}^{\infty}$. 
Then there exist sequences
$\{c_{m,j}\}_{j=1}^{\infty}$; $m=0,\ldots,M$, such that
$\left\{(c_{m-1,j},c_{m,j}];~m=1,\ldots,M\right\}$ 
partition the real line $\mathbb R$
for every $j\geq 1$ and that there exists $j_0\geq 1$ such that for all $j\geq j_0$,
the interval $(c_{m-1,j},c_{m,j}]$ contains at most one limit point of the sequence $\left\{S_{1,n}\right\}_{n=1}^{\infty}$,
for every $m=1,\ldots,M$. With these sequences we define
\begin{equation}
Y_{j}=m~~\mbox{if}~~c_{m-1,j}<S_{1,j}\leq c_{m,j};~m=1,2,\ldots,M,
\label{eq:y_finite}
\end{equation}

Recall that in Section 4 of our main manuscript 
we allowed the sequence $\{c_j\}_{j=1}^{\infty}$
to depend upon the underlying series $S_{1,\infty}$. 
Likewise, here also we allow the quantities $c_{0,j},c_{1,j},\ldots,c_{M,j}$ to depend upon 
$S_{1,\infty}$. In other words,
for $\omega\in\mathfrak S$, for $m=0,1,2,\ldots,M$, and $j=1,2,3,\ldots$, 
$c_{m,j}=c_{m,j}(\omega)$ corresponds to $S_{1,\infty}(\omega)$.

Note that unlike our ideas appropriate for non-oscillating series, here do not consider blocks
of partial sums, 
$S_{j,n_j}=\sum_{i=\sum_{k=0}^{j-1}n_k+1}^{\sum_{k=0}^jn_k}X_i$,
but $S_{1j}=\sum_{i=1}^jX_i$. In other words,
for Bayesian analysis of non-oscillating series we compute sums of $n_j$ terms in each iteration, 
whereas for oscillating series
we keep adding a single term at every iteration. Thus, computationally, the latter is a lot simpler.

We assume that
\begin{equation}
\left(\mathbb I(Y_{j}=1),\ldots,\mathbb I(Y_{j}=M)\right)
\sim Multinomial\left(1,p_{1,j},\ldots,p_{M,j}\right),
\label{eq:multinomial}
\end{equation}
where $p_{m,j}$ can be interpreted as the probability that $S_{1,j}\in (c_{m-1,j},c_{m,j}]$.
As $j\rightarrow\infty$ it is expected that $c_{m-1,j}$ and $c_{m,j}$ will converge to appropriate constants
depending upon $m$, and that $p_{m,j}$ will tend to the correct proportion of the limit point  
indexed by $m$. Indeed, let $\left\{p_{m,0};~m=1,\ldots,M\right\}$ denote the actual proportions
of the limit points indexed by $\left\{1,\ldots,M\right\}$, as $j\rightarrow\infty$.

Following the same principle discussed in Section 3 of our main manuscript, 
and extending the Beta prior to the Dirichlet prior, at the $k$-th stage we arrive at the
following posterior of $\left\{p_{m,k}:m=1,\ldots,M\right\}$:
\begin{equation}
\pi\left(p_{1,k},\ldots,p_{M,k}|y_{k}\right)
\equiv Dirichlet\left(\sum_{j=1}^k\frac{1}{j^2}+\sum_{j=1}^k\mathbb I\left(y_{j}=1\right),\ldots,
\sum_{j=1}^k\frac{1}{j^2}+\sum_{j=1}^k\mathbb I\left(y_{j}=M\right)\right).
\label{eq:posterior_dirichlet}
\end{equation}
The posterior mean and posterior variance of $p_{m,k}$, for $m=1,\ldots,M$, are given by:
\begin{align}
E\left(p_{m,k}|y_{k}\right)&=
\frac{\sum_{j=1}^k\frac{1}{j^2}+\sum_{j=1}^k\mathbb I\left(y_{j}=m\right)}
{M\sum_{j=1}^k\frac{1}{j^2}+k};
\label{eq:mean_dirichlet}\\
Var\left(p_{m,k}|y_{k}\right)&=
\frac{\left(\sum_{j=1}^k\frac{1}{j^2}+\sum_{j=1}^k\mathbb I\left(y_{j}=m\right)\right)
\left((M-1)\sum_{j=1}^k\frac{1}{j^2}+k-\sum_{j=1}^k\mathbb I\left(y_{j}=m\right)\right)}
{\left(M\sum_{j=1}^k\frac{1}{j^2}+k\right)^2\left(M\sum_{j=1}^k\frac{1}{j^2}+k+1\right)}.
\label{eq:var_dirichlet}
\end{align}
Let $k=M\tilde k$, where $\tilde k\rightarrow\infty$. Then,
from (\ref{eq:mean_dirichlet}) and (\ref{eq:var_dirichlet}) it is easily seen, using 
$\frac{\sum_{j=1}^k\mathbb I\left(y_{j}(\omega)=m\right)}{k}\rightarrow p_{m,0}$ as $k\rightarrow\infty$,
that,
\begin{align}
E\left(p_{m,k}|y_{k}\right)&\rightarrow p_{m,0},~~\mbox{and}
\label{eq:mean_dirichlet_convergence}\\
Var\left(p_{m,k}|y_{k}\right)&= O\left(\frac{1}{k}\right)\rightarrow 0,
\label{eq:var_dirichlet_convergence}
\end{align}
as $k\rightarrow\infty$.

We can now characterize the $m$ limit points of
$S_{1,\infty}(\omega)$ in terms of the limits of the marginal posterior
probabilities of $p_{m,k}$, denoted by $\pi_m\left(\cdot|y_k(\omega)\right)$, 
as $k\rightarrow\infty$.
\begin{theorem}
\label{theorem:finite_limit_points}
For $\omega\in\mathfrak S\cap \mathfrak N^c$, where $\mathfrak N$ has zero probability measure, 
$\left\{S_{1,n}(\omega)\right\}_{n=1}^{\infty}$ has $M~(>1)$ limit points almost surely if and only if 
\begin{itemize}
\item[(1)] There exist sequences $\{c_{m,j}(\omega)\}_{j=1}^{\infty}$; $m=0,\ldots,M$, such that 
$(c_{m-1,j}(\omega),c_{m,j}(\omega)]$ partition the real line $\mathbb R$ for every $j\geq 1$ and $m=1,\ldots,M$.
\item[(2)] There exists $j_0(\omega)\geq 1$ such that for all $j\geq j_0(\omega)$, for $m=1,\ldots,M$, 
$(c_{m-1,j}(\omega),c_{m,j}(\omega)]$ contains at most one
limit point of $\{S_{1,n}(\omega)\}_{n=1}^{\infty}$.
\item[(3)] With $Y_j$ defined as in (\ref{eq:y_finite}),
\begin{equation}
\pi_m\left(\mathcal N_{p_{m,0}}|y_{k}(\omega)\right)\rightarrow 1,
\label{eq:consistency_at_p_m_0}
\end{equation}
as $k\rightarrow\infty$. 
In the above, $\mathcal N_{p_{m,0}}$ is any neighborhood of $p_{m,0}$, with $p_{m,0}$ satisfying
$0<p_{m,0}<1$ for $m=1,\ldots,M$ such that $\sum_{m=1}^Mp_{m,0}=1$.
\end{itemize}
\end{theorem}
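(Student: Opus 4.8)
The plan is to mirror the two-directional template of Theorems~\ref{theorem:convergence} and \ref{theorem:divergence}, with the empirical cell frequencies $\widehat p_{m,k}(\omega):=\frac1k\sum_{j=1}^k\mathbb{I}(Y_j(\omega)=m)$ playing the role that the averaged indicators $\frac1k\sum_{j=1}^k y_{j,n_j}(\omega)$ played there. The one arithmetic fact I will lean on repeatedly is that $\sum_{j=1}^k\frac1{j^2}$ stays bounded (it tends to $\pi^2/6$): hence the Dirichlet posterior mean \eqref{eq:mean_dirichlet} equals $\widehat p_{m,k}(\omega)+O(1/k)$ and the posterior variance \eqref{eq:var_dirichlet} is $O(1/k)$, so $\widehat p_{m,k}(\omega)\to p_{m,0}$ for every $m$ is \emph{equivalent} to the concentration \eqref{eq:consistency_at_p_m_0} of the marginal posteriors at $p_{m,0}$. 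This equivalence is exactly \eqref{eq:mean_dirichlet_convergence}--\eqref{eq:var_dirichlet_convergence} combined with a Chebyshev/Markov argument, and it is what I will invoke on both sides of the ``if and only if''.

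\textbf{Necessity.} Assuming $\{S_{1,n}(\omega)\}$ has exactly $M$ distinct finite limit points $\ell_1<\cdots<\ell_M$ occurring with (strictly positive, unit-sum) limiting relative frequencies $p_{1,0},\dots,p_{M,0}$, I would set $c_{m,j}(\omega):=\tfrac12(\ell_m+\ell_{m+1})$ for $m=1,\dots,M-1$ (constant in $j$), with the conventions $c_{0,j}(\omega)=-\infty$, $c_{M,j}(\omega)=+\infty$; this gives a partition of $\mathbb{R}$ for every $j$, establishing (1). Since $\ell_1,\dots,\ell_M$ exhaust the limit set, for any $\epsilon$ below half the minimal gap there is $j_0(\omega)$ with $S_{1,j}(\omega)$ within $\epsilon$ of $\{\ell_1,\dots,\ell_M\}$ for all $j\ge j_0(\omega)$; then each cell contains at most the one limit point lying in it (establishing (2)), and $Y_j(\omega)=m\iff|S_{1,j}(\omega)-\ell_m|<\epsilon$, so $\widehat p_{m,k}(\omega)\to p_{m,0}$ directly from the definition of $p_{m,0}$. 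Feeding this into \eqref{eq:mean_dirichlet}--\eqref{eq:var_dirichlet} and applying Chebyshev's inequality, exactly as in the sufficiency half of Theorem~\ref{theorem:convergence}, yields \eqref{eq:consistency_at_p_m_0}.

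\textbf{Sufficiency.} Conversely, given (1)--(3) on $\mathfrak S\cap\mathfrak N^c$: from \eqref{eq:consistency_at_p_m_0}, Markov's inequality applied to $p_{m,k}$ and to $1-p_{m,k}$ (as in the converse parts of Theorems~\ref{theorem:convergence}, \ref{theorem:divergence}) forces $E(p_{m,k}\mid y_k)\to p_{m,0}$ and $Var(p_{m,k}\mid y_k)\to 0$; inverting \eqref{eq:mean_dirichlet} with the boundedness of $\sum_{j=1}^k\frac1{j^2}$ then gives $\widehat p_{m,k}(\omega)\to p_{m,0}$ for each $m$. Because every $p_{m,0}>0$, each cell $(c_{m-1,j}(\omega),c_{m,j}(\omega)]$ is entered for infinitely many $j$; by (2), for $j\ge j_0(\omega)$ each cell contains at most one limit point, so there are at most $M$ limit points. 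For the reverse count I would argue that a cell entered infinitely often contains a finite limit point: automatic for the interior cells $(c_{m-1},c_m]$, $2\le m\le M-1$, which are fixed bounded intervals (Bolzano--Weierstrass), and for the two extreme cells one must rule out escape to $\pm\infty$. Hence each of the $M$ cells carries exactly one limit point; the cells being disjoint, these $M$ limit points are distinct, with proportions $p_{1,0},\dots,p_{M,0}$. As this holds off the null set $\mathfrak N$, $\{S_{1,n}\}$ has $M$ limit points almost surely.

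\textbf{Where the work is.} The prior-to-posterior bookkeeping is routine and essentially identical to the Beta case. The genuinely delicate step is the geometric half of sufficiency: turning ``cell $m$ is visited with strictly positive asymptotic frequency'' into ``cell $m$ contains exactly one genuine finite limit point'', in particular excluding unbounded excursions through the two half-infinite extreme cells, while keeping (1) (a partition of all of $\mathbb{R}$ for \emph{every} $j$) and (2) (eventual one-limit-point-per-cell) simultaneously satisfiable. This argument also tacitly assumes the limiting relative frequencies $p_{m,0}$ exist; the cleanest fix is to fold that, together with boundedness of $\{S_{1,n}\}$, into the standing meaning of ``$\{S_{1,n}\}$ has $M$ limit points with proportions $p_{m,0}$''.
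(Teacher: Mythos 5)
Your proposal is correct and follows essentially the same route as the paper's own proof: identify the Dirichlet posterior mean with the empirical cell frequencies up to $O(1/k)$ (using boundedness of $\sum_j 1/j^2$), pass between frequency convergence and posterior concentration via Chebyshev/Markov, and in the converse direction use strict positivity of each $p_{m,0}$ together with condition (2) to count exactly $M$ limit points. In fact you are more explicit than the paper, which merely asserts that conditions (1)--(2) ``clearly hold'' and that fewer limit points would force some $p_{m,0}=0$; the delicate points you flag (the half-infinite extreme cells, and the tacit assumption that the limiting proportions exist) are likewise left unaddressed in the paper's own argument.
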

\begin{proof}
For $\omega\in\mathfrak S\cap\mathfrak N^c$, where $\mathfrak N$ has zero probability measure, 
let $S_{1,\infty}(\omega)$ be oscillatory with $M$ limit points having proportions
$\left\{p_{m,0};~m=1,\ldots,M\right\}$. Conditions (1) and (2) then clearly hold.
Then with our definition of $Y_j$ provided in (\ref{eq:y_finite}), 
the results (\ref{eq:mean_dirichlet_convergence}) and (\ref{eq:var_dirichlet_convergence}) hold 
with $k=M\tilde k$, where $\tilde k\rightarrow\infty$.
Now let $\mathcal N_{p_{m,0}}$ be any neighborhood of $p_{m,0}$.
Let $\epsilon>0$ be sufficiently small so that 
$\mathcal N_{p_{m,0}}\supseteq\left\{|p_{m,k}-p_{m,0}|<\epsilon\right\}$. Then by Chebychev's inequality,
using (\ref{eq:mean_dirichlet_convergence}) and (\ref{eq:var_dirichlet_convergence}), it is seen that
$\pi_m\left(\mathcal N_{p_{m,0}}|y_k(\omega)\right)\rightarrow 1$, as $k\rightarrow\infty$.
Thus, (\ref{eq:consistency_at_p_m_0}) holds. In fact, more generally, condition (3) holds.

Now assume that conditions (1), (2), (3) hold. 
Then $\pi_m\left(|p_{m,k}-p_{m,0}|<\epsilon|y_k(\omega)\right)\rightarrow 1$, as $k\rightarrow\infty$.
Combining this with Chebychev's inequality it follows that 
(\ref{eq:mean_dirichlet_convergence}) and (\ref{eq:var_dirichlet_convergence}) hold with
$0<p_{m,0}<1$ for $m=1,\ldots,M$ such that $\sum_{m=1}^Mp_{m,0}=1$. If $\left\{S_{1,n}(\omega)\right\}_{n=1}^{\infty}$ has
less than $M$ limit points, then at least one $p_{m,0}=0$, providing a contradiction.
Hence $\left\{S_{1,n}(\omega)\right\}_{n=1}^{\infty}$ must have $M$ limit points.
\end{proof}

\subsection{Choice of $c_{0,j},\ldots,c_{M,j}$ for a given series}
\label{subsec:choice_c}

Let us define, for $j=1,2,\ldots,k$,
\begin{align}
\tilde p_{\ell,j}&=\left\{\begin{array}{ccc}
0 & \mbox{if} & \ell=0;\\
E\left(p_{\ell,j}|y_{j}\right) & \mbox{if} & \ell=1,2,\ldots,M.
\end{array}\right. 
\label{eq:osc_recursive_postmean1}
\end{align}
We also define, for $\ell=1,2,\ldots,M$,
\begin{equation}
\tilde p_{\ell,0}=E\left(p_{\ell,1}\right),
\label{eq:osc_priormean}
\end{equation}
the prior mean at the first stage, before observing any data.

We then set $c_{0,j}\equiv 0$ for all $j=1,2,\ldots,k$, and, for $m\geq 1$, define
\begin{equation}
c_{m,j}=\log\left[\frac{\left(\sum_{\ell=1}^m\tilde p_{\ell,{j-1}}\right)^{1/\rho(\theta)}}
{1-\left(\sum_{\ell=1}^m\tilde p_{\ell,{j-1}}\right)^{1/\rho(\theta)}}\right],
\label{eq:c_finite}
\end{equation}
for $j=1,2,\ldots,k$.
Thus, the inequality $c_{m-1,j}<S_{1,j}\leq c_{m,j}$ in (\ref{eq:y_finite}) is equivalent to
\begin{equation}
\sum_{\ell=1}^{m-1}\tilde p_{\ell,k}<\left(\frac{\exp\left(S_{1,j}\right)}{1+\exp\left(S_{1,j}\right)}\right)^{\rho(\theta)}
\leq \sum_{\ell=1}^m\tilde p_{\ell,k},
\label{eq:c_finite2}
\end{equation}
where $\rho(\theta)$ is some relevant power depending upon the set of parameters $\theta$ of the given series,
responsible for appropriately
inflating or contracting the quantity $\frac{\exp\left(S_{1,j}\right)}{1+\exp\left(S_{1,j}\right)}$ 
for properly diagnosing the limit points. Thus, given the series $S_{1,\infty}(\omega)$, $\theta=\theta(\omega)$
is allowed to depend upon the underlying series.
If $\left(\frac{\exp\left(S_{1,j}\right)}{1+\exp\left(S_{1,j}\right)}\right)^{\rho(\theta)}\geq 1$, we set $Y_j=M$.
By (\ref{eq:consistency_at_p_m_0}), for large $k$, $\tilde p_{\ell,k}$ and $S_{1,j}$ adaptively adjust
themselves so that the correct proportions of the limit points are achieved in the long run.

\subsection{Infinite number of limit points}
\label{subsec:infinite_limit_points}

We now assume that the number of limits points of $\left\{S_{1,n}(\omega)\right\}_{n=1}^{\infty}$ is countably infinite,
and that $\left\{p_{m,0};m=1,2,3,\ldots\right\}$, where $0\leq p_{m,0}\leq 1$ and $\sum_{m=1}^{\infty}p_{m,0}=1$,
are the true proportions of the limit points.

Now we define
\begin{equation}
Y_{j}=m~~\mbox{if}~~c_{m-1,j}<S_{1,j}\leq c_{m,j};~m=1,2,\ldots,\infty,
\label{eq:y_infinite}
\end{equation}
where the sequences $\left\{c_{m,j}\right\}_{j=1}^{\infty}$; $m\geq 1$, are such that 
$(c_{m-1,j},c_{m,j}]$; $m\geq 1$, partition $\mathbb R$ for every $j\geq 1$, and that there
exists $j_0\geq 1$ such that for all $j\geq j_0$, these intervals contain at most one limit point of
$\left\{S_{1,n}\right\}_{n=1}^{\infty}$.

Let $\mathcal X=\left\{1,2,\ldots\right\}$ and let $\mathcal B\left(\mathcal X\right)$ denote the Borel
$\sigma$-field on $\mathcal X$ (assuming every singleton of $\mathcal X$ is an open set). Let $\mathcal P$
denote the set of probability measures on $\mathcal X$. Then, at the $j$-th stage,
\begin{equation}
[Y_j|P_j]\sim P_j,
\label{eq:Y_DP}
\end{equation}
where $P_j\in\mathcal P$. We assume that
$P_j$ is the following Dirichlet process (see \ctn{Ferguson73}):
\begin{equation}
P_j\sim DP\left(\frac{1}{j^2}G\right),
\label{eq:DP}
\end{equation}
where, the probability measure $G$ is such that, for every $j\geq 1$,
\begin{equation}
G\left(Y_j=m\right)=\frac{1}{2^m}.
\label{eq:G}
\end{equation}
It then follows using the same previous principles that, at the $k$-th stage, 
the posterior of $P_k$ is again a Dirichlet process, given by
\begin{equation}
[P_k|y_k]\sim DP\left(\sum_{j=1}^k\frac{1}{j^2}G+\sum_{j=1}^k\delta_{y_j}\right),
\label{eq:posterior_DP}
\end{equation}
where $\delta_{y_j}$ denotes point mass at $y_j$.
It follows from (\ref{eq:posterior_DP}) that
\begin{align}
E\left(p_{m,k}|y_{k}\right)&=
\frac{\frac{1}{2^m}\sum_{j=1}^k\frac{1}{j^2}+\sum_{j=1}^k\mathbb I\left(y_{j}=m\right)}
{\sum_{j=1}^k\frac{1}{j^2}+k};
\label{eq:mean_DP}\\
Var\left(p_{m,k}|y_{k}\right)&=
\frac{\left(\sum_{j=1}^k\frac{1}{j^2}+\sum_{j=1}^k\mathbb I\left(y_{j}=m\right)\right)
\left((1-\frac{1}{2^m})\sum_{j=1}^k\frac{1}{j^2}+k-\sum_{j=1}^k\mathbb I\left(y_{j}=m\right)\right)}
{\left(\sum_{j=1}^k\frac{1}{j^2}+k\right)^2\left(\sum_{j=1}^k\frac{1}{j^2}+k+1\right)}.
\label{eq:var_DP}
\end{align}
As before, it easily follows from (\ref{eq:mean_DP}) and (\ref{eq:var_DP}) that
for $m=1,2,3,\ldots$, 
\begin{align}
E\left(p_{m,k}|y_{k}\right)&\rightarrow p_{m,0},~~\mbox{and}
\label{eq:mean_DP_convergence}\\
Var\left(p_{m,k}|y_{k}\right)&= O\left(\frac{1}{k}\right)\rightarrow 0,
\label{eq:var_DP_convergence}
\end{align}
almost surely, as $k\rightarrow\infty$.

The theorem below characterizes countable number of limit points of
$S_{1,\infty}$ in terms of the limit of the marginal posterior
probabilities of $p_{m,k}$, 
as $k\rightarrow\infty$. 
\begin{theorem}
\label{theorem:infinite_limit_points}
For $\omega\in\mathfrak S\cap \mathfrak N^c$, where $\mathfrak N$ has zero probability measure, 
$\left\{S_{1,n}(\omega)\right\}_{n=1}^{\infty}$ has countable limit points almost surely if and only if 
\begin{itemize}
\item[(1)] There exist sequences $\{c_{m,j}(\omega)\}_{j=1}^{\infty}$; $m=0,1,2\ldots$, such that 
$(c_{m-1,j}(\omega),c_{m,j}(\omega)]$ partition the real line $\mathbb R$ for every $j\geq 1$ and $m\geq 1$.
\item[(2)] There exists $j_0(\omega)\geq 1$ such that for all $j\geq j_0(\omega)$,  
$(c_{m-1,j}(\omega),c_{m,j}(\omega)]$ contains at most one
limit point of $\{S_{1,n}(\omega)\}_{n=1}^{\infty}$, for every $m\geq 1$.
\item[(3)] With $Y_j$ defined as in (\ref{eq:y_infinite}),
\begin{equation}
\pi_m\left(\mathcal N_{p_{m,0}}|y_{k}(\omega)\right)\rightarrow 1,
\label{eq:consistency_at_p_m_0_DP}
\end{equation}
as $k\rightarrow\infty$. 
In the above, $\mathcal N_{p_{m,0}}$ is any neighborhood of $p_{m,0}$, with $p_{m,0}$ satisfying
$0\leq p_{m,0}\leq 1$ for $m=1,2,\ldots$ such that $\sum_{m=1}^{\infty}p_{m,0}=1$, with at most finite number
of $m$ such that $p_{m,0}=0$.
\end{itemize}
\end{theorem}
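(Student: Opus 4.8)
The plan is to follow the template of the proof of Theorem~\ref{theorem:finite_limit_points}, replacing the finite-dimensional Dirichlet computations by the Dirichlet process computations that have already been recorded in (\ref{eq:posterior_DP})--(\ref{eq:var_DP_convergence}). Thus almost all of the analytic content is already in place, and the proof reduces to (a) extracting conditions (1) and (2) from the hypothesis in the forward direction, (b) passing between posterior concentration and convergence of posterior mean and variance via Chebychev's inequality, and (c) a short Bolzano--Weierstrass-type argument in the converse direction to convert statements about the proportions $p_{m,0}$ into statements about the number of limit points.

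For the ``only if'' implication I would fix $\omega\in\mathfrak S\cap\mathfrak N^c$ and assume $\{S_{1,n}(\omega)\}_{n=1}^{\infty}$ has countably infinitely many limit points with asymptotic proportions $\{p_{m,0}\}_{m\geq 1}$. Condition (1) is obtained simply by choosing, for each $j$, a countable partition of $\mathbb R$ into half-open intervals; condition (2) holds from some $j_0(\omega)$ onward because the limit points are mutually isolated in the limit, so a sufficiently fine partition separates them (this is where the adaptive choice of Section~\ref{subsec:choice_c} is used). With $Y_j$ defined by (\ref{eq:y_infinite}), the very meaning of ``proportion of the $m$-th limit point'' gives $k^{-1}\sum_{j=1}^k\mathbb I(y_j(\omega)=m)\to p_{m,0}$ almost surely; substituting this into (\ref{eq:mean_DP}) and (\ref{eq:var_DP}) and using $\sum_j 1/j^2\to\pi^2/6$ yields (\ref{eq:mean_DP_convergence}) and (\ref{eq:var_DP_convergence}). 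For any neighbourhood $\mathcal N_{p_{m,0}}$ of $p_{m,0}$, choosing $\epsilon>0$ with $\{|p_{m,k}-p_{m,0}|<\epsilon\}\subseteq\mathcal N_{p_{m,0}}$ and applying Chebychev's inequality to the marginal posterior of $p_{m,k}$ then gives $\pi_m(\mathcal N_{p_{m,0}}|y_k(\omega))\to1$, which is (\ref{eq:consistency_at_p_m_0_DP}).

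For the converse I would assume conditions (1)--(3). From (3), for each $m$ and each $\epsilon>0$, $\pi_m(|p_{m,k}-p_{m,0}|<\epsilon\,|\,y_k(\omega))\to1$; since $p_{m,k}\in[0,1]$, Markov's and Chebychev's inequalities force $E(p_{m,k}|y_k(\omega))\to p_{m,0}$ and $Var(p_{m,k}|y_k(\omega))\to0$. Conditions (1) and (2) guarantee that, for $j\geq j_0(\omega)$, the countably many partition cells contain at most one limit point each, so $\{S_{1,n}(\omega)\}_{n=1}^{\infty}$ has at most countably many limit points. On the other hand, by hypothesis $p_{m,0}=0$ for only finitely many $m$, so infinitely many cells carry a strictly positive asymptotic frequency of the partial sums; each such cell, being eventually bounded, must contain an accumulation point of $\{S_{1,j}(\omega)\}$, so there are infinitely many limit points. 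Combining the two bounds, $\{S_{1,n}(\omega)\}_{n=1}^{\infty}$ has exactly countably infinitely many limit points, and since this holds off the null set $\mathfrak N$, it holds almost surely, completing the proof.

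The hard part will be the geometric step in the converse: turning ``infinitely many $p_{m,0}>0$'' into ``infinitely many genuine limit points''. This needs the cells $(c_{m-1,j}(\omega),c_{m,j}(\omega)]$ to stabilise, or at least stay bounded, as $j\to\infty$ on the indices $m$ with $p_{m,0}>0$, so that a positive density of partial sums in a cell produces an actual accumulation point rather than escaping to infinity; this is precisely why $\{c_{m,j}(\omega)\}_j$ must be chosen adaptively (cf.\ (\ref{eq:c_finite}) and (\ref{eq:c_finite2})), and I would state this stabilisation explicitly as a lemma before invoking Bolzano--Weierstrass. Everything else is a direct transcription of the finite-$M$ argument together with the Dirichlet process conjugacy already established in (\ref{eq:posterior_DP}).
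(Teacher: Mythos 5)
Your proposal is correct and takes essentially the same approach as the paper: the paper's entire proof of this theorem is the single line ``Follows using the same ideas as the proof of Theorem \ref{theorem:finite_limit_points}'', and your argument is precisely that finite-$M$ proof transcribed with the Dirichlet-process posterior formulas (\ref{eq:posterior_DP})--(\ref{eq:var_DP_convergence}). If anything you are more careful than the paper, since your flagged Bolzano--Weierstrass/cell-stabilisation step in the converse is a genuine gap that the paper's one-line proof (and its finite-$M$ template) silently glosses over.
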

\begin{proof}
Follows using the same ideas as the proof of Theorem \ref{theorem:finite_limit_points}.
\end{proof}

As regards the choice of the quantities $c_{m,j}$, we simply extend the construction detailed 
in Section \ref{subsec:choice_c} by only letting $M\rightarrow\infty$, and with obvious replacement
of the posterior means with those associated with the posterior Dirichlet process.

It is useful to remark that our theory with countably infinite number of limit points is readily
applicable to 
situations where the number of limit points
is finite but unknown. In such cases, only a finite number of the probabilities $\left\{p_{m,j};~m=1,2,3\ldots\right\}$ 
will have posterior probabilities around positive quantities, while the rest will concentrate around zero.
For known finite number of limit points, it is only required to specify $G$ such that it 
gives positive mass to only a specific finite set.

\subsection{Characterization of convergence and divergence with our approach on limit points}
\label{subsec:discussion_limpoints}
Note that for convergent series, $\pi_m\left(\mathcal N_1|y_k(\omega)\right)\rightarrow 1$ as $k\rightarrow\infty$
for smaller values of $m$, while for divergent series with $S_{1,\infty}(\omega)=\infty$
or $S_{1,\infty}(\omega)=-\infty$, $\pi_m\left(\mathcal N_1|y_k(\omega)\right)\rightarrow 1$ as $k\rightarrow\infty$
for much larger values of $m$ and the smallest value of $m$, respectively.
We formalize these statements below as the following theorems.

\begin{theorem}
\label{theorem:divergence_limpoints}
Let there be $M$ number of possible limit points of $S_{1,\infty}(\omega)$, where
$M$ may be infinite. 
Then for any $\omega\in\mathfrak S\cap\mathfrak N^c$, where $\mathfrak N$ has zero probability measure,
$S_{1,\infty}(\omega)=\infty$ if and only if, 
for any sequences $\{c_{m,j}(\omega)\}_{j=1}^{\infty}$;
$m=1,2,\ldots,M$, such that $(c_{m-1,j}(\omega),c_{m,j}(\omega)]$; $m=1,\ldots,M$, partitions the real line $\mathbb R$
for every $j\geq 1$, it holds that
\begin{equation}
\pi_{m,k}\left(\mathcal N_{1}|y_k(\omega)\right)\rightarrow 1,
\label{eq:post_limpoints1}
\end{equation}
as $k\rightarrow\infty$ and $m\rightarrow M$. 
\end{theorem}
\begin{proof}
For $\omega\in\mathfrak S\cap\mathfrak N^c$, where $\mathfrak N$ has zero probability measure, 
let $S_{1,\infty}(\omega)=\infty$. 
Then as $k\rightarrow\infty$, 
\begin{equation}
\left(\frac{\exp\left(S_{1,k}(\omega)\right)}{1+\exp\left(S_{1,k}(\omega)\right)}\right)^{\rho(\theta(\omega))}\rightarrow 1.
\label{eq:S_divergence1}
\end{equation}
In other words, for any fixed $M~(>1)$, $y_k(\omega)\rightarrow M$, as $k\rightarrow\infty$. Hence, as
$k\rightarrow\infty$ and $m\rightarrow M$, 
it easily follows using the same techniques as before, that (\ref{eq:post_limpoints1}) holds. 
Consequently, for infinite number of limit points, (\ref{eq:post_limpoints1}) holds as $m\rightarrow\infty$.

Now assume that (\ref{eq:post_limpoints1}) holds. It then follows from the formula of the posterior
mean that $y_k(\omega)\rightarrow M$, as $k\rightarrow\infty$, for fixed $M$.
Hence, (\ref{eq:S_divergence1}) holds, from which it follows that $S_{1,\infty}(\omega)=\infty$.
\end{proof}

\begin{theorem}
\label{theorem:divergence_limpoints_negative}
Let there be $M$ number of possible limit points of $S_{1,\infty}(\omega)$, where
$M$ may be infinite. 
Then for any $\omega\in\mathfrak S\cap\mathfrak N^c$, where $\mathfrak N$ has zero probability measure,
$S_{1,\infty}(\omega)=-\infty$ almost surely if and only if 
for any sequences $\{c_{m,j}(\omega)\}_{j=1}^{\infty}$;
$m=1,2,\ldots,M$, such that $(c_{m-1,j}(\omega),c_{m,j}(\omega)]$; $m=1,\ldots,M$, partitions the real line $\mathbb R$
for every $j\geq 1$, it holds that
\begin{equation}
\pi_{m,k}\left(\mathcal N_{1}|y_k(\omega)\right)\rightarrow 1,
\label{eq:post_limpoints1_negative}
\end{equation}
as $k\rightarrow\infty$ and $m\rightarrow 1$. 
\end{theorem}
\begin{proof}
For $\omega\in\mathfrak S\cap\mathfrak N^c$, where $\mathfrak N$ has zero probability measure, 
let $S_{1,\infty}(\omega)=-\infty$. 
Then as $k\rightarrow\infty$, 
\begin{equation}
\left(\frac{\exp\left(S_{1,k}(\omega)\right)}{1+\exp\left(S_{1,k}(\omega)\right)}\right)^{\rho(\theta(\omega))}\rightarrow 0.
\label{eq:S_divergence1_negative}
\end{equation}
In other words, for any fixed $M~(>1)$, $y_k(\omega)\rightarrow 1$, as $k\rightarrow\infty$. Hence, as
$k\rightarrow\infty$ and $m\rightarrow 1$, it is easily seen that (\ref{eq:post_limpoints1_negative}) holds. 

Also, if (\ref{eq:post_limpoints1_negative}) holds, then it follows from the formula of the posterior
mean that $y_k(\omega)\rightarrow 1$, as $k\rightarrow\infty$.
Hence, (\ref{eq:S_divergence1_negative}) holds, from which it follows that $S_{1,\infty}(\omega)=-\infty$.
\end{proof}

\begin{theorem}
\label{theorem:convergence_limpoints}
For $\omega\in\mathfrak S\cap\mathfrak N^c$, where $\mathfrak N$ has zero probability measure,
$S_{1,\infty}(\omega)$ is convergent if and only if 
for any sequences $\{c_{m,j}(\omega)\}_{j=1}^{\infty}$;
$m=1,2,\ldots,M$, such that $(c_{m-1,j}(\omega),c_{m,j}(\omega)]$; $m=1,\ldots,M$, partitions the real line $\mathbb R$
for every $j\geq 1$, it holds 
for some finite $m_0(\omega)\geq 1$, that
\begin{equation}
\pi_{m_0(\omega),k}\left(\mathcal N_{1}|y_k(\omega)\right)\rightarrow 1,
\label{eq:post_limpoints2}
\end{equation}
as $k\rightarrow\infty$. 
\end{theorem}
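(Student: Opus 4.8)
The plan is to follow the template of the proofs of Theorems~\ref{theorem:divergence_limpoints} and~\ref{theorem:divergence_limpoints_negative}: translate convergence of $S_{1,\infty}(\omega)$ into eventual constancy of the labels $Y_j(\omega)$ produced by the construction (\ref{eq:c_finite})--(\ref{eq:c_finite2}), and then read the posterior behaviour off the closed-form Dirichlet (process) posterior moments. For the necessity part, fix $\omega\in\mathfrak S\cap\mathfrak N^c$ with $S_{1,n}(\omega)\to L$ for some finite $L$. Then the transformed quantity $\big(e^{S_{1,j}(\omega)}/(1+e^{S_{1,j}(\omega)})\big)^{\rho(\theta(\omega))}$ appearing in (\ref{eq:c_finite2}) converges to the interior point $q^{*}:=\big(e^{L}/(1+e^{L})\big)^{\rho(\theta(\omega))}\in(0,1)$. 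Since the cell endpoints in (\ref{eq:c_finite}) are running cumulative posterior means, I would argue that $q^{*}$ eventually sits strictly inside a single cell, so that there is a finite index $m_0(\omega)\ge 1$ and a stage $j_0(\omega)$ with $Y_j(\omega)=m_0(\omega)$ for all $j\ge j_0(\omega)$. Consequently $\sum_{j=1}^k\mathbb I\big(y_j(\omega)=m_0(\omega)\big)=k-O(1)$, and substituting into (\ref{eq:mean_dirichlet})--(\ref{eq:var_dirichlet}) in the finite case, or into (\ref{eq:mean_DP})--(\ref{eq:var_DP}) in the countable case, gives $E\big(p_{m_0(\omega),k}\,|\,y_k(\omega)\big)\to 1$ and $Var\big(p_{m_0(\omega),k}\,|\,y_k(\omega)\big)=O(1/k)\to 0$. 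Chebychev's inequality then delivers (\ref{eq:post_limpoints2}) for this $m_0(\omega)$ and any neighbourhood $\mathcal N_1$ of $1$.

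For the sufficiency part, suppose (\ref{eq:post_limpoints2}) holds for some finite $m_0(\omega)$. Applying Markov's inequality to $1-p_{m_0(\omega),k}$, exactly as in the sufficiency arguments of Theorems~\ref{theorem:divergence_limpoints} and~\ref{theorem:divergence_limpoints_negative}, forces $E\big(p_{m_0(\omega),k}\,|\,y_k(\omega)\big)\to 1$ and $Var\big(p_{m_0(\omega),k}\,|\,y_k(\omega)\big)\to 0$, whence the posterior-mean formula gives $k^{-1}\sum_{j=1}^k\mathbb I\big(y_j(\omega)=m_0(\omega)\big)\to 1$. I would then rule out the non-convergent alternatives for $S_{1,\infty}(\omega)$ one by one. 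If $\{S_{1,n}(\omega)\}$ has $M\ge 2$ limit points with proportions $\{p_{m,0}\}$, then Theorems~\ref{theorem:finite_limit_points} and~\ref{theorem:infinite_limit_points} give $E\big(p_{m_0(\omega),k}\,|\,y_k(\omega)\big)\to p_{m_0(\omega),0}<1$, contradicting the above. If $S_{1,\infty}(\omega)=+\infty$ (respectively $-\infty$), then by the proof of Theorem~\ref{theorem:divergence_limpoints} (respectively~\ref{theorem:divergence_limpoints_negative}) the transformed quantity tends to $1$ (respectively $0$), so $y_k(\omega)$ is eventually the top (respectively the bottom) label; since $m_0(\omega)$ is finite and, in the countably-infinite-cell construction of Section~\ref{subsec:infinite_limit_points}, no finite label is the top one, this again forces $k^{-1}\sum_{j=1}^k\mathbb I\big(y_j(\omega)=m_0(\omega)\big)\to 0$, a contradiction. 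Hence $S_{1,\infty}(\omega)$ must be convergent.

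The part I expect to be the main obstacle is the sufficiency direction, specifically excluding divergence to $\pm\infty$: this is clean in the countably-infinite-cell framework, but in a model with only finitely many cells one must add the caveat that $m_0(\omega)\notin\{1,M\}$ (otherwise $S_{1,\infty}(\omega)=\pm\infty$ would also produce (\ref{eq:post_limpoints2})). A second, more technical, subtlety sits in the necessity direction: the endpoints $c_{m,j}(\omega)$ in (\ref{eq:c_finite}) depend on the past labels $y_1(\omega),\dots,y_{j-1}(\omega)$ through the running posterior means, so proving eventual constancy of $Y_j(\omega)$ requires a short self-consistency argument --- once the recent labels are predominantly some $m$, the posterior mean $\tilde p_{m,j}$ is driven toward $1$ and the remaining ones toward $0$, which pins the cumulative sums $\sum_{\ell\le m-1}\tilde p_{\ell,j}$ and $\sum_{\ell\le m}\tilde p_{\ell,j}$ into a window around $q^{*}$ that shrinks to $\{q^{*}\}$, so the label cannot escape. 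Beyond these two points, everything reduces to the routine combination of the explicit posterior moments with Chebychev's and Markov's inequalities already used throughout Section~\ref{sec:osc_mult}.
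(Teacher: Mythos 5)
Your necessity argument coincides with the paper's: the paper likewise sends the logistic transform $\bigl(e^{S_{1,k}(\omega)}/(1+e^{S_{1,k}(\omega)})\bigr)^{\rho(\theta(\omega))}$ to a constant $c(\omega)\in[0,1)$, concludes $y_k(\omega)\rightarrow m_0(\omega)$ for a finite $m_0(\omega)$, and invokes ``the same techniques as before'' (posterior moments plus Chebychev) to obtain (\ref{eq:post_limpoints2}); your additional self-consistency discussion of the data-dependent cell endpoints in (\ref{eq:c_finite2}) fills in a step the paper does not even mention. Where you genuinely diverge is the sufficiency half. The paper argues directly: from the posterior-mean formula, $y_k(\omega)\rightarrow m_0(\omega)$, hence (\ref{eq:S_convergence1}) holds, hence $S_{1,\infty}(\omega)$ converges. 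You instead argue by elimination, importing Theorems \ref{theorem:finite_limit_points}, \ref{theorem:infinite_limit_points}, \ref{theorem:divergence_limpoints} and \ref{theorem:divergence_limpoints_negative} to contradict each non-convergent alternative. Each route has a cost: the paper's step from ``$y_k(\omega)$ is eventually the constant label $m_0(\omega)$'' to ``the logistic transform converges to a single constant $c(\omega)<1$'' is a non sequitur as written, since eventual membership in a cell does not by itself force convergence of $S_{1,k}(\omega)$; your elimination, on the other hand, presupposes that the only possible behaviours are convergence, finitely or countably many limit points, or divergence to $\pm\infty$, which is not an exhaustive trichotomy for a general real sequence. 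Your caveat about $m_0(\omega)=1$ (and $m_0(\omega)=M$ in the finite-cell model) is entirely correct and is in fact conceded by the paper itself only in the discussion following the theorem and in the rule of thumb of Section \ref{subsec:thumb_rule}: as stated, the ``if'' direction is contradicted by Theorem \ref{theorem:divergence_limpoints_negative}, because $S_{1,\infty}(\omega)=-\infty$ also produces (\ref{eq:post_limpoints2}) with the finite label $m_0(\omega)=1$. The paper's proof does not repair this; you at least isolate it explicitly.
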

\begin{proof}
Let $S_{1,\infty}(\omega)$ be convergent. 
Then as $k\rightarrow\infty$, 
\begin{equation}
\left(\frac{\exp\left(S_{1,k}(\omega)\right)}{1+\exp\left(S_{1,k}(\omega)\right)}\right)^{\rho(\theta(\omega))}\rightarrow 
c(\omega),
\label{eq:S_convergence1}
\end{equation}
for some constant $0\leq c(\omega)<1$. Hence, there exists some finite $m_0(\omega)\geq 1$ such that 
$y_k(\omega)\rightarrow m_0(\omega)$, as $k\rightarrow\infty$.  
Using the same techniques as before, it is seen that that (\ref{eq:post_limpoints2}) holds. 

Now assume that (\ref{eq:post_limpoints2}) holds. It then follows from the formula of the posterior
mean, that $y_k(\omega)\rightarrow m_0(\omega)$, as $k\rightarrow\infty$.
Hence, (\ref{eq:S_convergence1}) holds, from which it follows that $S_{1,\infty}(\omega)$ is convergent.
\end{proof}

According to Theorems \ref{theorem:divergence_limpoints_negative} and \ref{theorem:convergence_limpoints},
$m$ tends to $1$ and a finite quantity greater than or equal to $1$, accordingly as the series diverges
to $-\infty$ or converges. If the finite quantity in the latter case turns out to be $1$, then it is not
possible to distinguish between convergence and divergence to $-\infty$ by this method. However, 
Theorem 4.1 of our main manuscript 
can be usefully exploited in this case. If this method based on oscillating
series yields $m=1$, then we suggest checking for convergence using Theorem 4.1, 
which would then help us confirm if the series is truly convergent. 

\subsection{A rule of thumb for diagnosis of convergence, divergence and oscillations}
\label{subsec:thumb_rule}

Based on the above theorems we propose the following rule of thumb for detecting convergence and divergence
when $M$ is finite:
if $\frac{m}{M}> 0.9$ such that $\pi_{m,k}\left(\mathcal N_{1}|y_k(\omega)\right)\rightarrow 1$ 
as $k\rightarrow\infty$, then declare the series as divergent to $\infty$. If 
$0.1<\frac{m}{M}\leq 0.9$ such that $\pi_{m,k}\left(\mathcal N_{1}|y_k(\omega)\right)\rightarrow 1$, then declare the
series as convergent. On the other hand, if $\frac{m}{M}\leq 0.1$, use Theorem 4.1 
to check for convergence; in the case of negative result, declare the series as divergent to $-\infty$.

If, instead, there exist $m_{\ell};~\ell=1,\ldots,L$ ($L>1$) such that 
$\pi_{m_{\ell},k}\left(\mathcal N_{p_{m_{\ell},0}}|y_k(\omega)\right)\rightarrow 1$ as
$k\rightarrow\infty$, where $0<p_{m_{\ell},0}<1$ for $\ell=1,\ldots,L$ and $\sum_{\ell=1}^Lp_{m_{\ell},0}=1$, then 
say that the sequence $\left\{S_{1,n}(\omega)\right\}_{n=1}^{\infty}$ has $L$ limit points.

\section{Illustration of our Bayesian theory on oscillation}
\label{sec:osc_examples}
We first consider a simple oscillatory series to illustrate our Bayesian idea on detection of limit points
(Section \ref{subsec:osc_series1}).
Next, in Section \ref{subsec:Bayesian_limpoints}, we illustrate our theory on limit points with Example 5, 
arguably the most complex series in our
set of examples (other than Riemann Hypothesis) and in Section \ref{sec:Bayesian_limpoints_RH},
validate our result on Riemann Hypothesis with our Bayesian limit point theory. 

\subsection{Illustration with a simple oscillatory series}
\label{subsec:osc_series1}
Let us re-consider the series $S_{1,\infty}(\omega)=\sum_{i=1}^{\infty}\left(-1\right)^{i-1}$,
which we already introduced after Theorem 4.2 of our main manuscript. 
We consider the theory based on Dirichlet process
developed in Section \ref{subsec:infinite_limit_points}, assuming for the sake
of illustrations that $G$ is concentrated on $M$ values, with $G\left(Y_j=m\right)=\frac{1}{M}$;
$m=1,2,\ldots,M$. We set $M=10$ and $K=10^5$ for our experiments.
With $\rho(\theta)=2$, the results are depicted
in Figure \ref{fig:osc_series1}. Two explicit limit points, with proportions $0.5$ each, are 
correctly recognized. The limit points are obviously $0$ and $1$ for this example.
Implementation takes just a fraction of a second, even on an ordinary 32-bit laptop.
\begin{figure}
\centering
\subfigure [First limit point: The posterior of $p_{5,k}$ converges to $0.5$ as $k\rightarrow\infty$]
{\label{fig:series1_1}
\includegraphics[width=6cm,height=5cm]{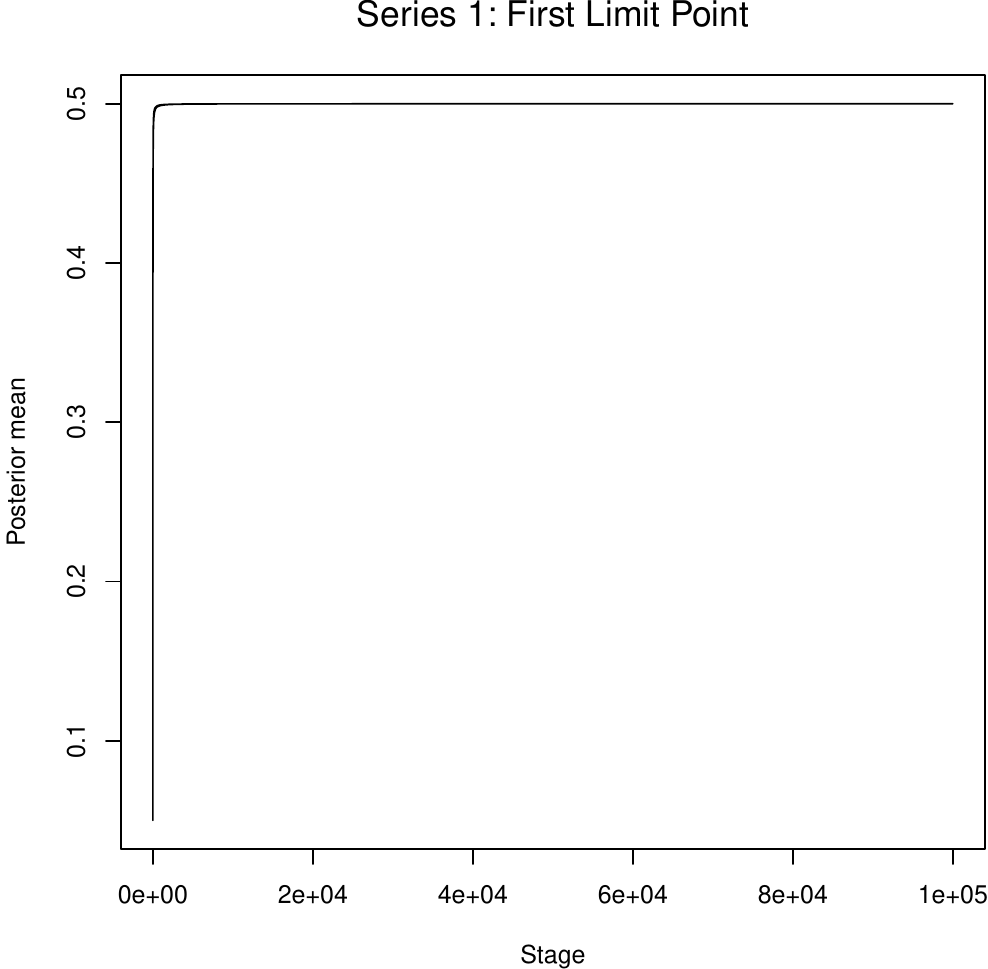}}
\hspace{2mm}
\subfigure [Second limit point: The posterior of $p_{6,k}$ converges to $0.5$ as $k\rightarrow\infty$.]
{ \label{fig:series1_2}
\includegraphics[width=6cm,height=5cm]{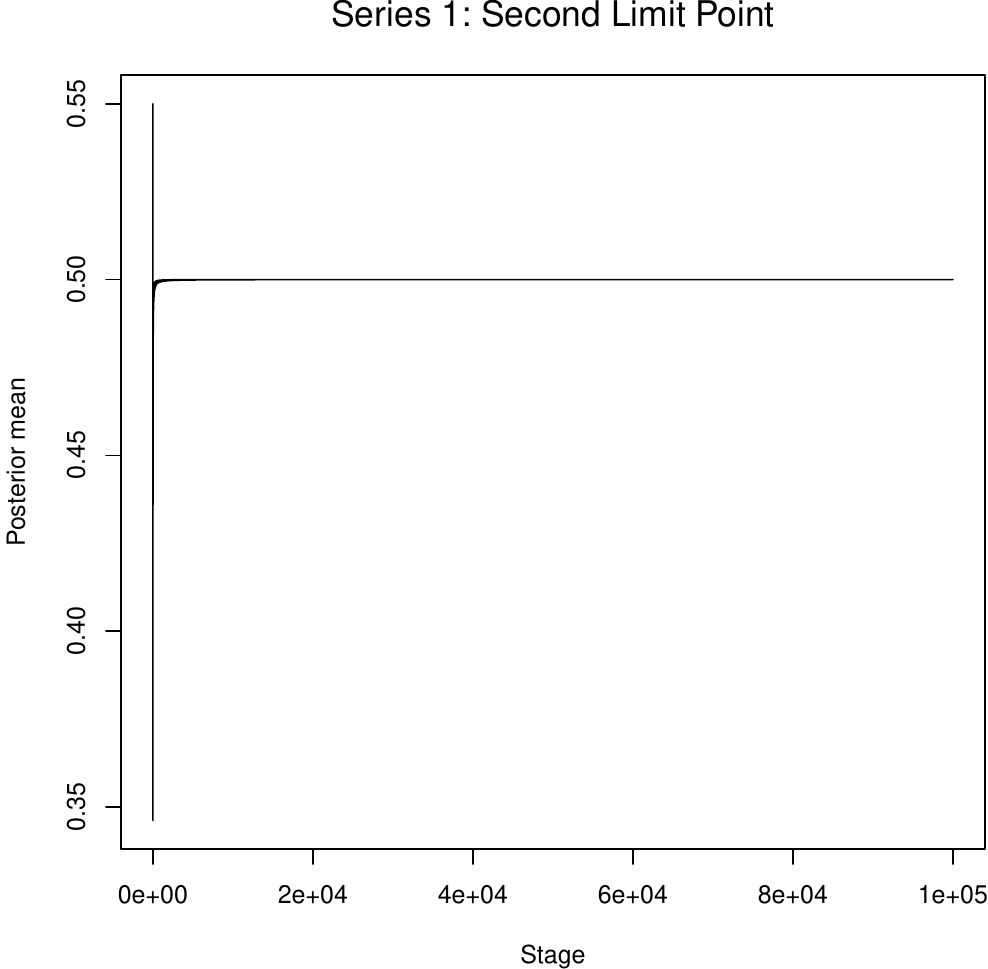}}
\caption{Illustration of the Dirichlet process based theory on the first oscillating series:
two limit points, each with proportion $0.5$, are captured.} 
\label{fig:osc_series1}
\end{figure}

\subsection{Illustration of the Bayesian limit point theory with Example 5}
\label{subsec:Bayesian_limpoints}

Since there is at most one limit point in the cases that we investigated, application
of our ideas to these cases must be able to re-confirm this. 
As before we consider the theory based on Dirichlet process
with $G\left(Y_j=m\right)=\frac{1}{M}$;
$m=1,2,\ldots,M$, where we set $M=10$.
Thus, by our rule of thumb, divergence is to be declared only if 
$\pi_{m=10,k}\left(\mathcal N_{1}|y_k\right)\rightarrow 1$, as $k\rightarrow\infty$. 

As regards implementation, notice that here there is no scope for parallelization since at the $j$-th step
only $y_j$ is added to the existing $S_{1,j-1}$ to form $S_{1,j}=S_{1,j-1}+y_j$. As such, on our VMware,
using a single processor, only about two seconds are required for $10^5$ iterations associated with the series
(\ref{eq:example5}), for various values of $a~(>0)$ and $b~(>0)$.

\subsubsection{Choice of $\rho(\theta)$ in 
$\left(\frac{\exp\left(S_{1,k}\right)}{1+\exp\left(S_{1,k}\right)}\right)^{\rho(\theta)}$}
\label{subsubsec:rho_choice}
In our example, $\theta=(a,b)$. We choose, for $j\geq 1$, 
\begin{equation}
\tilde\rho(\theta)=a-b+\epsilon, 
\label{eq:rho_choice}
\end{equation}
and set
\begin{equation}
\left(\frac{\exp\left(S_{1,j}\right)}{1+\exp\left(S_{1,j}\right)}\right)^{\rho(\theta)}
=\min\left\{1,\left(\frac{\exp\left(S_{1,j}\right)}{1+\exp\left(S_{1,j}\right)}\right)^{\tilde\rho(\theta)}\right\}
\label{eq:S_limpoints}
\end{equation}
Recall that the series (\ref{eq:example5}), defined for $a>0$ and $b>0$, converges for $a-b>1$ and diverges for $a+b<1$. 
In keeping with this result,
(\ref{eq:S_limpoints}) decreases as $(a-b)$ increases, so that the chance of correctly diagnosing convergence 
increases. Moreover, if both $a$ and $b$ are between 0 and 1 such that $a+b<1$, then (\ref{eq:S_limpoints}) tends
to be inflated, thereby increasing the chance of correctly detecting divergence.
The term $\epsilon$ in (\ref{eq:S_limpoints}) prevents the power from becoming zero when $a=b$. It is important
to note here that for $a+b=1$ convergence or divergence is not guaranteed, but if $\epsilon=0$ in (\ref{eq:S_limpoints}), 
then $a=b$ would trivially indicate divergence, even if the series is actually convergent. A positive
value of $\epsilon$ provides protection from such erroneous decision.
Note that if $a<b-\epsilon$, the convergence criterion $a-b>1$ is not met but the divergence criterion
$a+b<1$ may still be satisfied. Thus, for such instances, greater weight in favour of divergence is indicated.
In our illustration, we set $\epsilon=10^{-10}$.

\subsubsection{Results}
\label{subsubsec:results}
Figure \ref{fig:limpoints_example5} shows the results of our Bayesian analysis of the series (\ref{eq:example5})
based on our Dirichlet process model. Based on the rule of thumb proposed in Section \ref{subsec:thumb_rule}
all the results are in agreement with the results based on Figure \ref{fig:example5}.
\begin{figure}
\centering
\subfigure [Convergence: $a=2,b=1$. The posterior of $p_{6,k}$ converges to 1 as $k\rightarrow\infty$]
{\label{fig:limpoints_a_2_b_1}
\includegraphics[width=6cm,height=5cm]{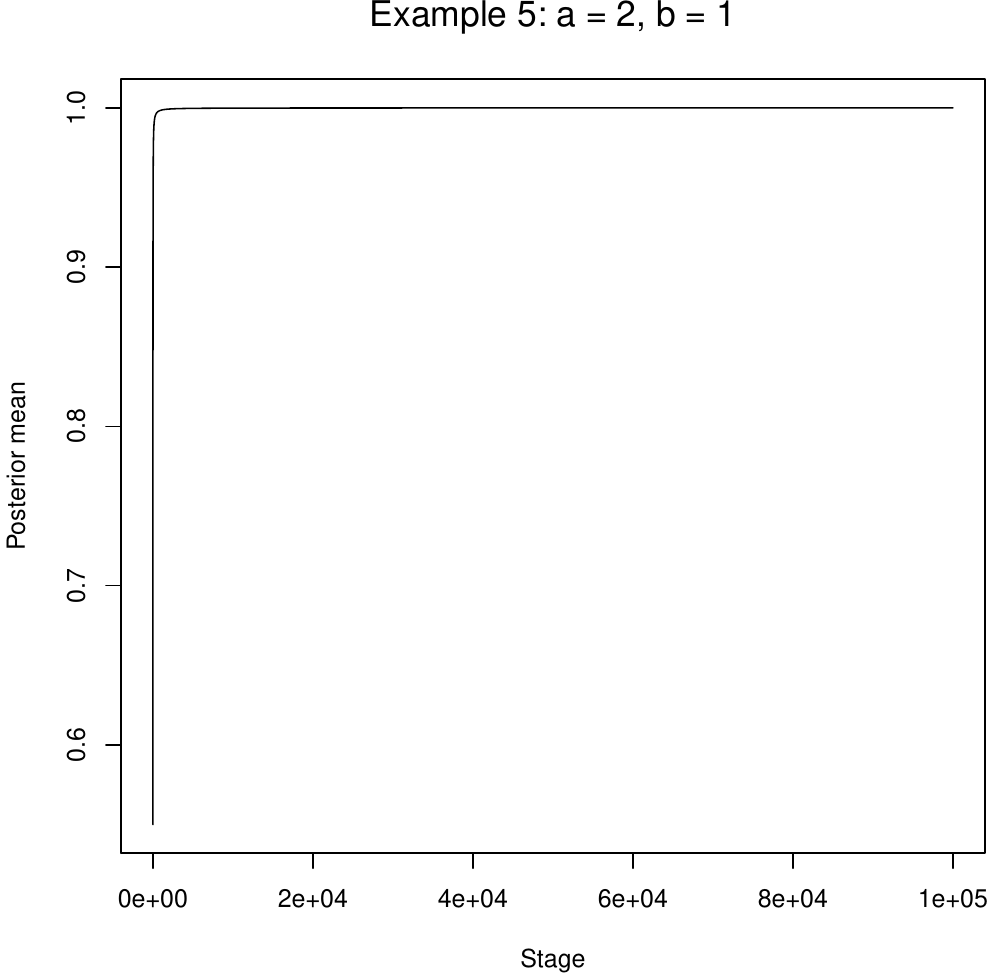}}
\hspace{2mm}
\subfigure [Convergence: $a=1+20^{-10},b=10^{-10}$.
The posterior of $p_{6,k}$ converges to 1 as $k\rightarrow\infty$.]
{ \label{fig:limpoints_a12_b01}
\includegraphics[width=6cm,height=5cm]{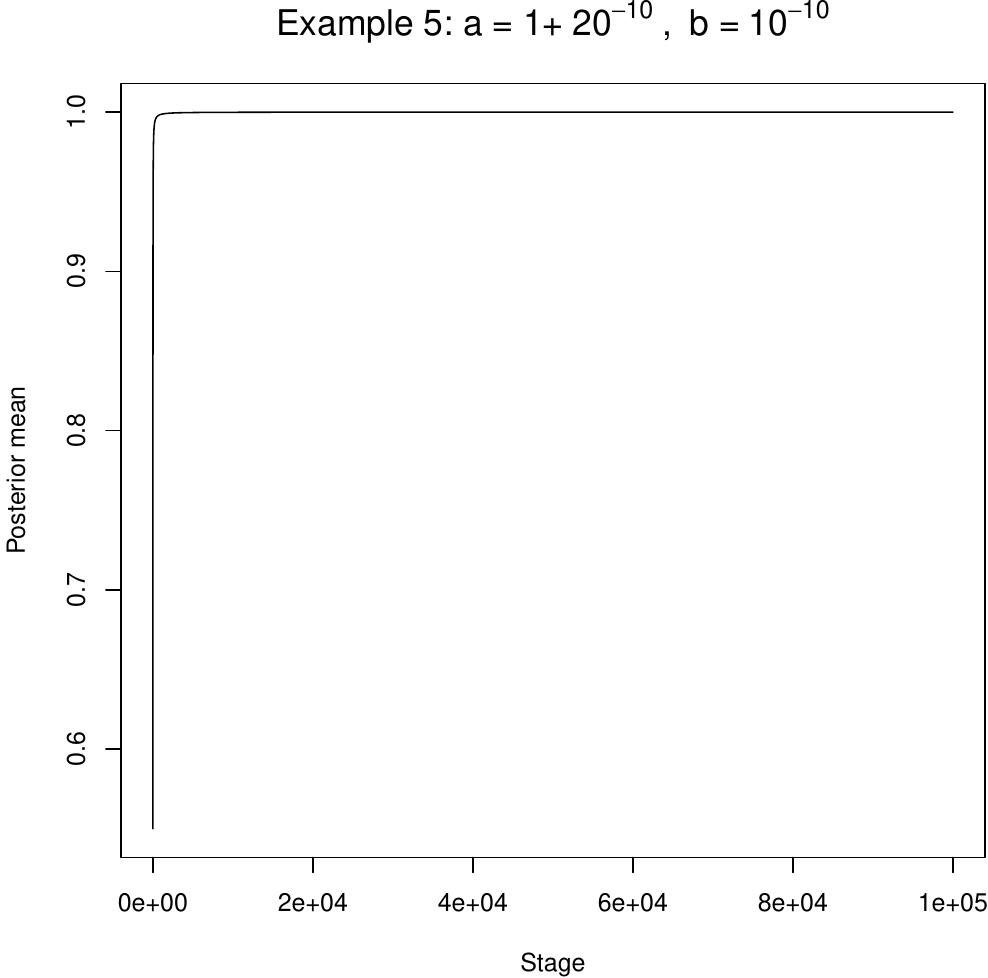}}\\
\hspace{2mm}
\subfigure [Convergence: $a=1+30^{-10},b=20^{-10}$.
The posterior of $p_{6,k}$ converges to 1 as $k\rightarrow\infty$.]
{\label{fig:limpoints_a13_b02}
\includegraphics[width=6cm,height=5cm]{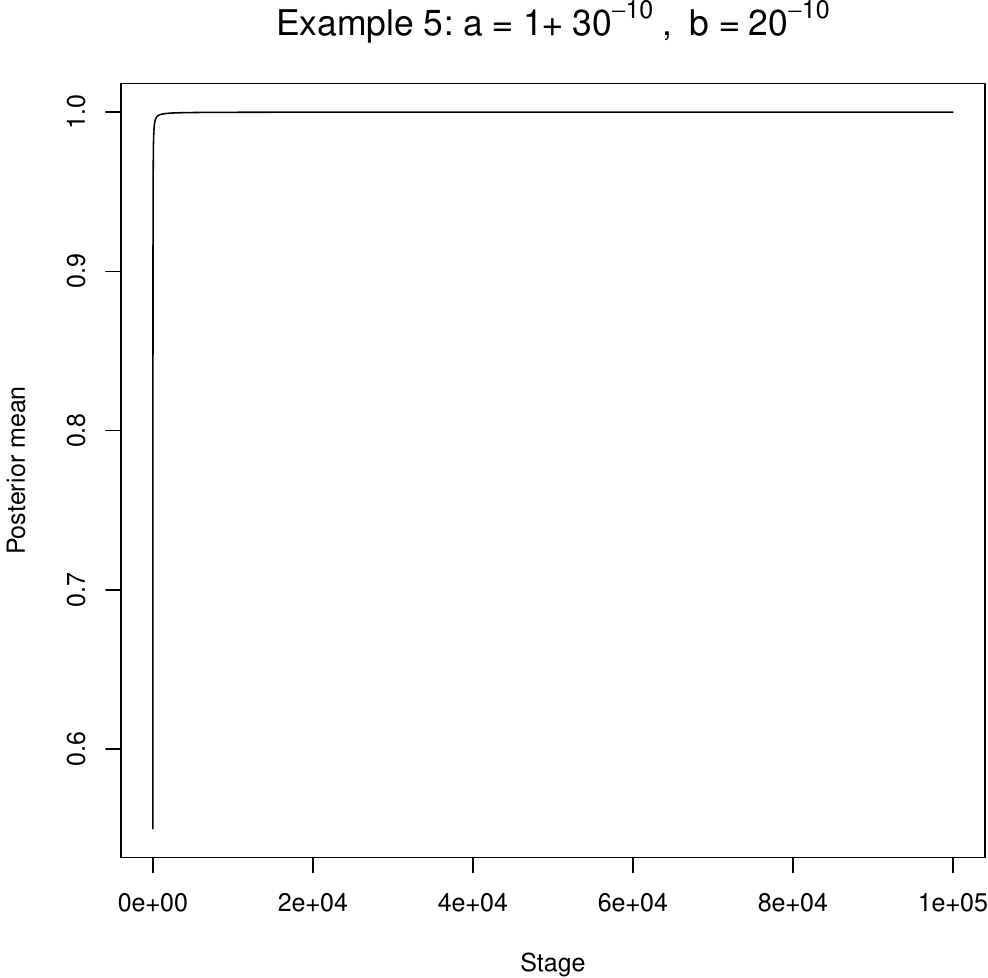}}
\hspace{2mm}
\subfigure [Divergence: $a=1/2,b=1/2$.
The posterior of $p_{10,k}$ converges to 1 as $k\rightarrow\infty$.]
{ \label{fig:limpoints_a_1_2_b_1_2}
\includegraphics[width=6cm,height=5cm]{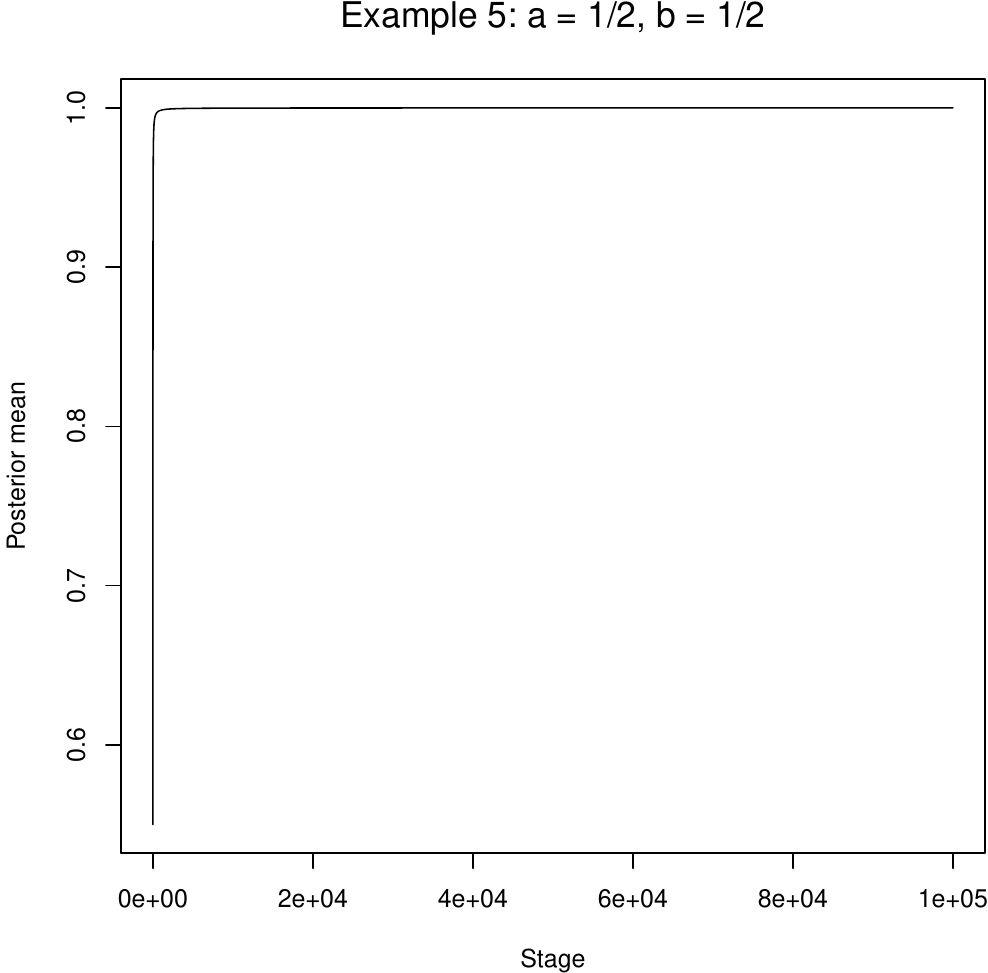}}\\
\subfigure [Divergence: $a=\frac{1}{2}\left(1-10^{-11}\right),
b=\frac{1}{2}\left(1-10^{-11}\right)$.
The posterior of $p_{10,k}$ converges to 1 as $k\rightarrow\infty$.]
{\label{fig:limpoints_a+b_less_1}
\includegraphics[width=6cm,height=5cm]{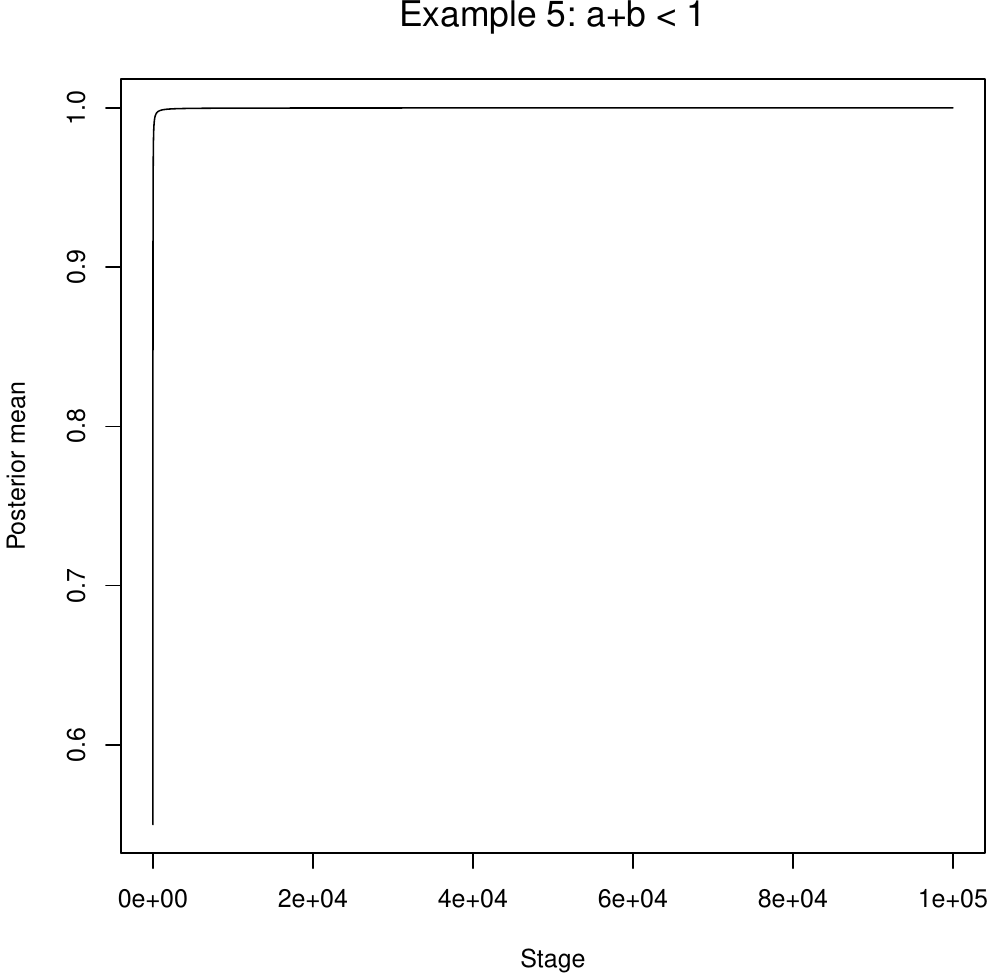}}
\caption{Illustration of the Dirichlet process based theory with Example 5: 
For $(a=2,b=1)$ in the series (\ref{eq:example5}), $\frac{m}{M}=\frac{6}{10}<0.9$, indicating convergence, 
for $(a=1+20^{-10},b=10^{-10})$, $\frac{m}{M}=\frac{6}{10}<0.9$, indicating convergence, for
$(a=1+30^{-10},b=20^{-10})$, $\frac{m}{M}=\frac{6}{10}<0.9$, indicating convergence,
for $(a=1/2,b=1/2)$, $\frac{m}{M}=\frac{10}{10}>0.9$, indicating divergence,  
and for $\left(a=\frac{1}{2}\left(1-10^{-11}\right),b=\frac{1}{2}\left(1-10^{-11}\right)\right)$,
$\frac{m}{M}=\frac{10}{10}>0.9$, indicating divergence.}
\label{fig:limpoints_example5}
\end{figure}

\section{Application of the Bayesian multiple limit points theory to Riemann Hypothesis}
\label{sec:Bayesian_limpoints_RH}
To strengthen our result on Riemann Hypothesis presented in Section 6 of our main manuscript 
we consider application of our Bayesian multiple limit points theory to Riemann Hypothesis.

\subsection{Choice of $\rho(\theta)$ in 
$\left(\frac{\exp\left(S_{1,k}\right)}{1+\exp\left(S_{1,k}\right)}\right)^{\rho(\theta)}$}
\label{subsec:rho_choice_RH}
For Riemann Hypothesis, $\theta=a$; we choose, for $j\geq 1$, 
\begin{equation}
\tilde\rho(\theta)=a^6. 
\label{eq:rho_choice_RH}
\end{equation}
The reason for such choice with a relatively large power is to allow discrimination between 
$\left(\frac{\exp\left(S_{1,k}\right)}{1+\exp\left(S_{1,k}\right)}\right)^{\rho(\theta)}$ for close values of $a$. However,
substantially large powers of $a$ are not appropriate because that would make the aforementioned
term too small to enable detection of divergence. In fact, we have chosen the power after much experimentation.
Implementation of our methods takes about 2 seconds on our VMWare, with $10^5$ iterations.

\subsection{Results}
\label{subsec:results_RH_limpoints}

The results of application of our ideas on multiple limit points are depicted in Figures 
\ref{fig:RH_DP_1}, \ref{fig:RH_DP_2} and \ref{fig:RH_DP_3}. The values of $m/M$ and the
thumb rule proposed in Section \ref{subsec:thumb_rule} show that all the results are
consistent with those obtained in Section 6. 
For $a=2$ and $a=3$ we obtained $m/M=0.1$, but the existing theory and our results 
reported in Section 6 
confirm that the series is convergent, and not oscillating, for these values. 
There seems to be a slight discrepancy
only regarding the location of the change point of convergence. 
In this case, unlike $a=0.72$ as obtained in Section 6, 
we obtained $a=0.7$ as the change point (see panel (b) of Figure \ref{fig:RH_DP_2}).

This (perhaps) negligible difference notwithstanding, both of our methods are remarkably
in agreement with each other, emphasizing our point that Riemann Hypothesis can not be
completely supported.

\begin{figure}
\centering
\subfigure [Divergence: $a=0.1$, $\frac{m}{M}=\frac{10}{10}$.]{ \label{fig:RH_DP_a_01}
\includegraphics[width=6cm,height=5cm]{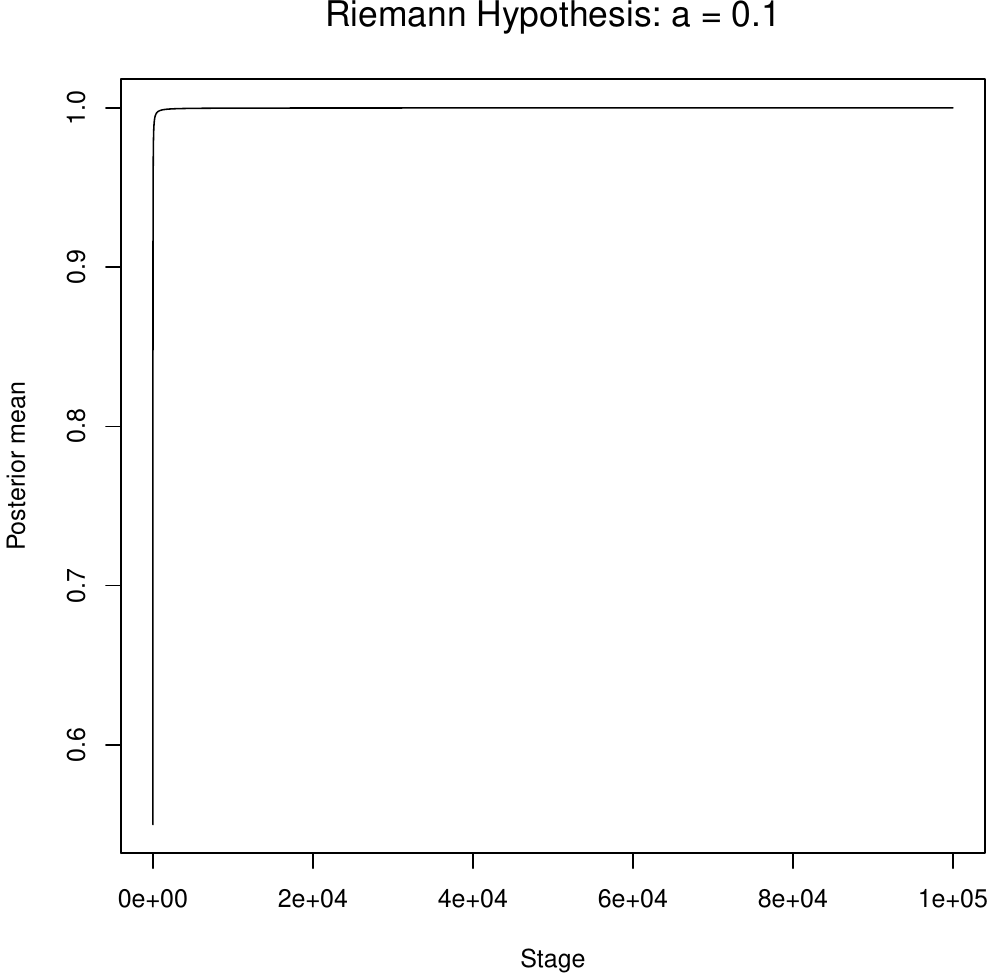}}
\hspace{2mm}
\subfigure [Divergence: $a=0.2$, $\frac{m}{M}=\frac{10}{10}$.]{ \label{fig:RH_DP_a_02}
\includegraphics[width=6cm,height=5cm]{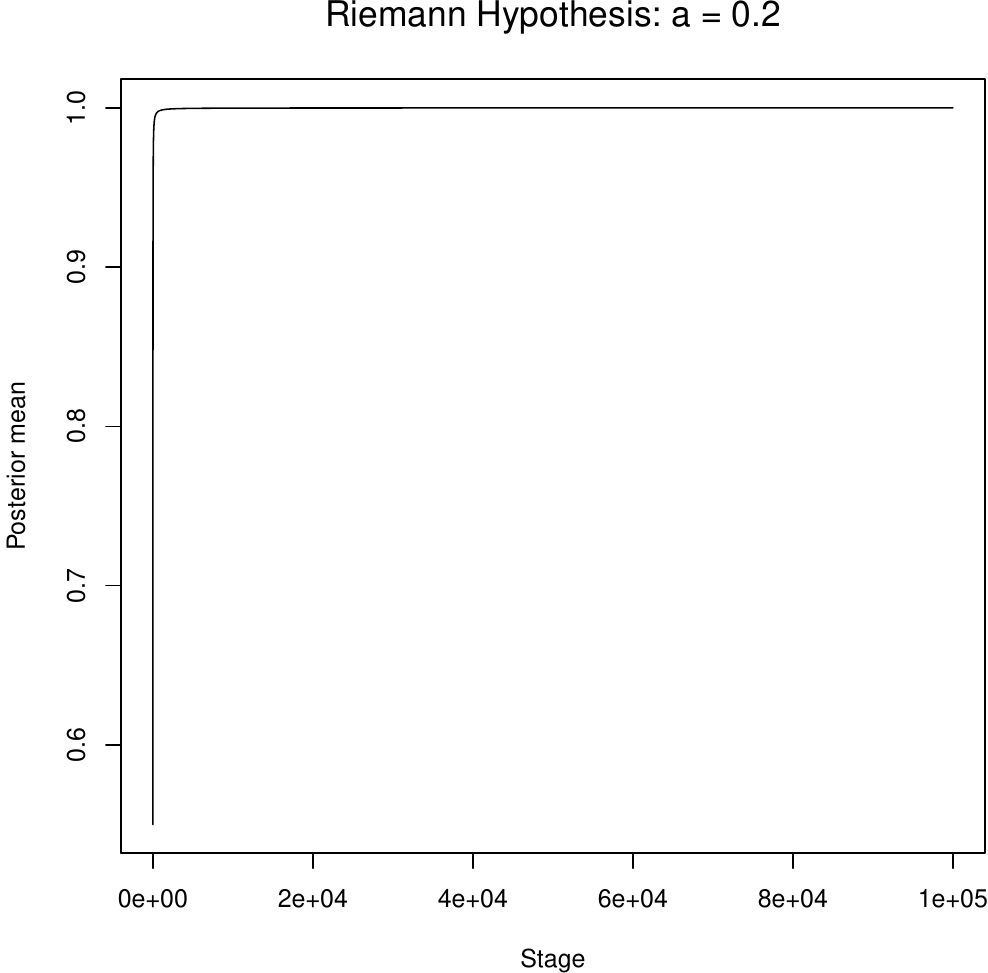}}\\
\subfigure [Divergence: $a=0.3$, $\frac{m}{M}=\frac{10}{10}$.]{ \label{fig:RH_DP_a_03}
\includegraphics[width=6cm,height=5cm]{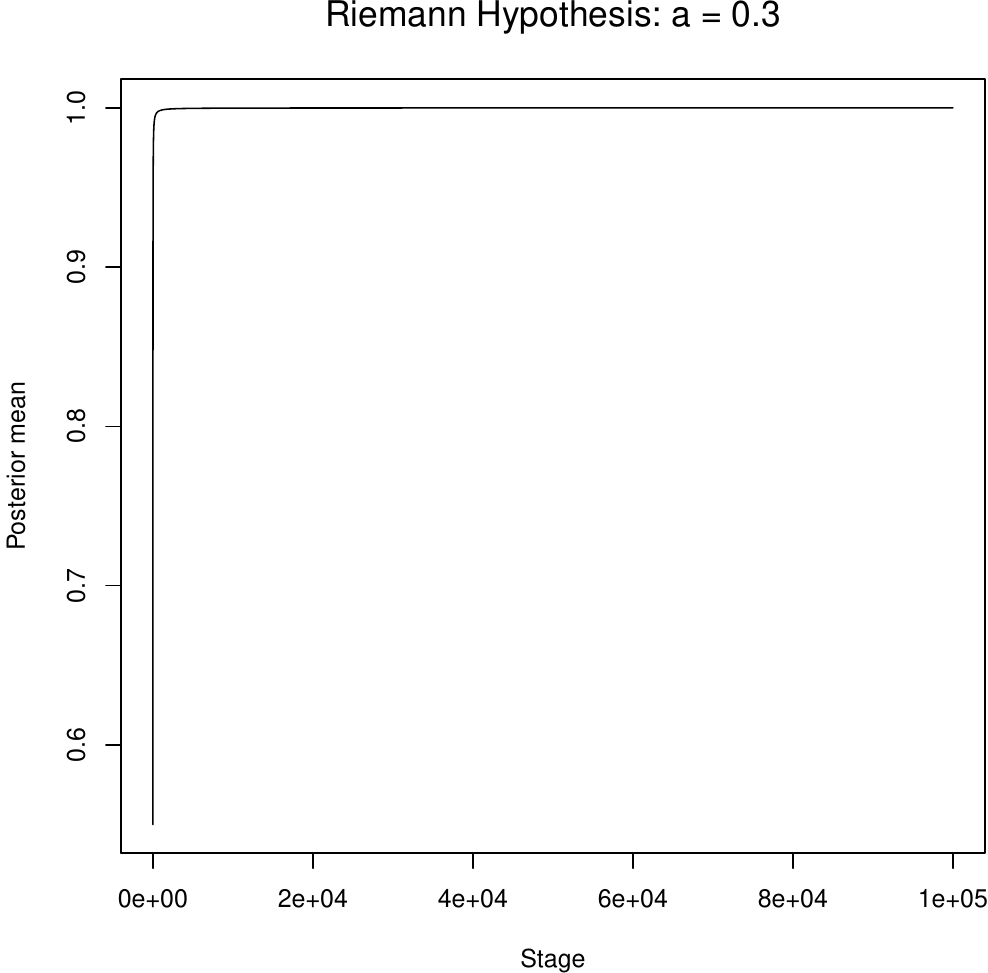}}
\hspace{2mm}
\subfigure [Divergence: $a=0.4$, $\frac{m}{M}=\frac{10}{10}$.]{ \label{fig:RH_DP_a_04}
\includegraphics[width=6cm,height=5cm]{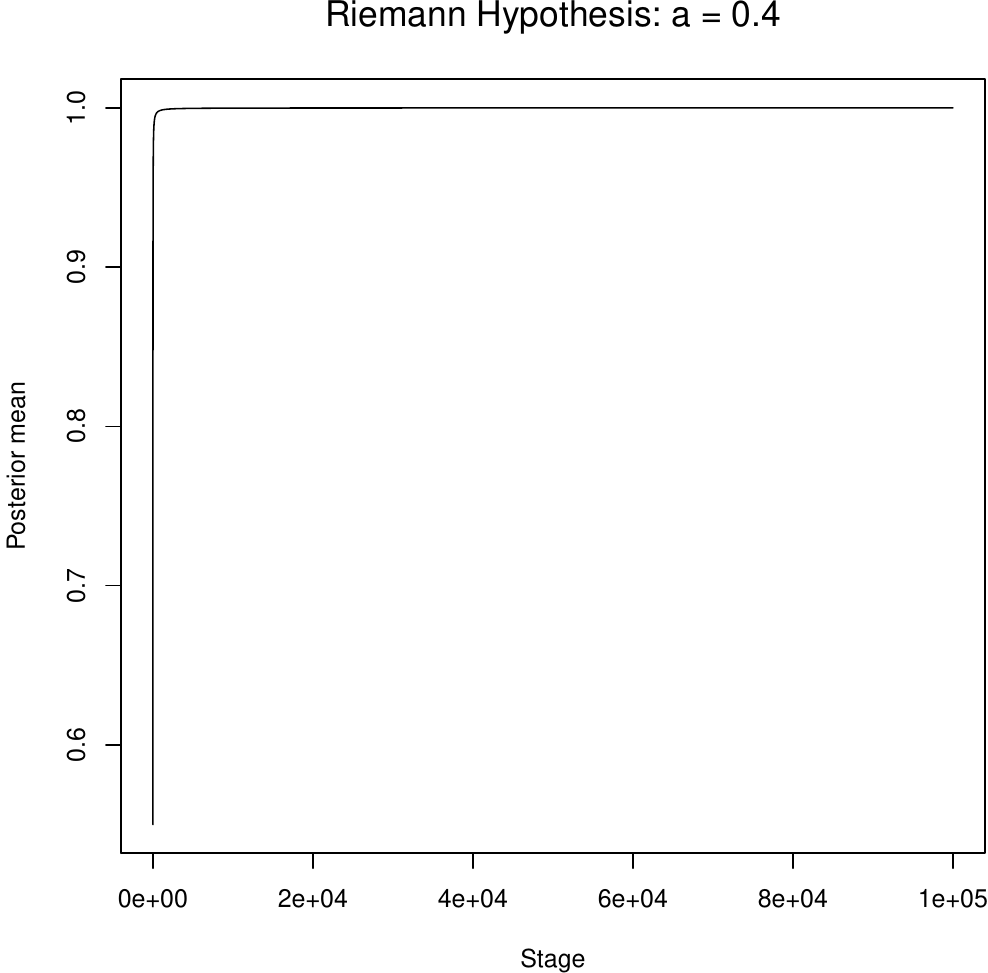}}\\
\subfigure [Divergence: $a=0.5$, $\frac{m}{M}=\frac{10}{10}$.]{ \label{fig:RH_DP_a_05}
\includegraphics[width=6cm,height=5cm]{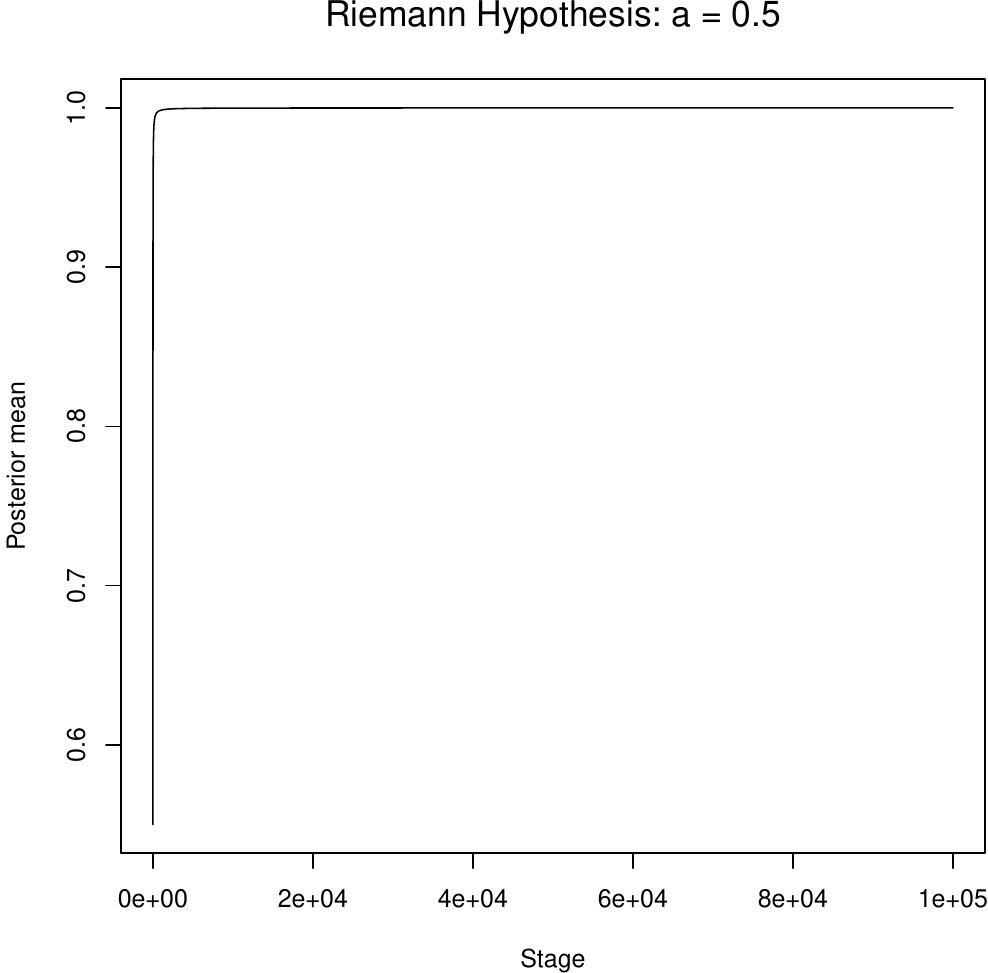}}
\hspace{2mm}
\subfigure [Divergence: $a=0.6$, $\frac{m}{M}=\frac{10}{10}$.]{ \label{fig:RH_DP_a_06}
\includegraphics[width=6cm,height=5cm]{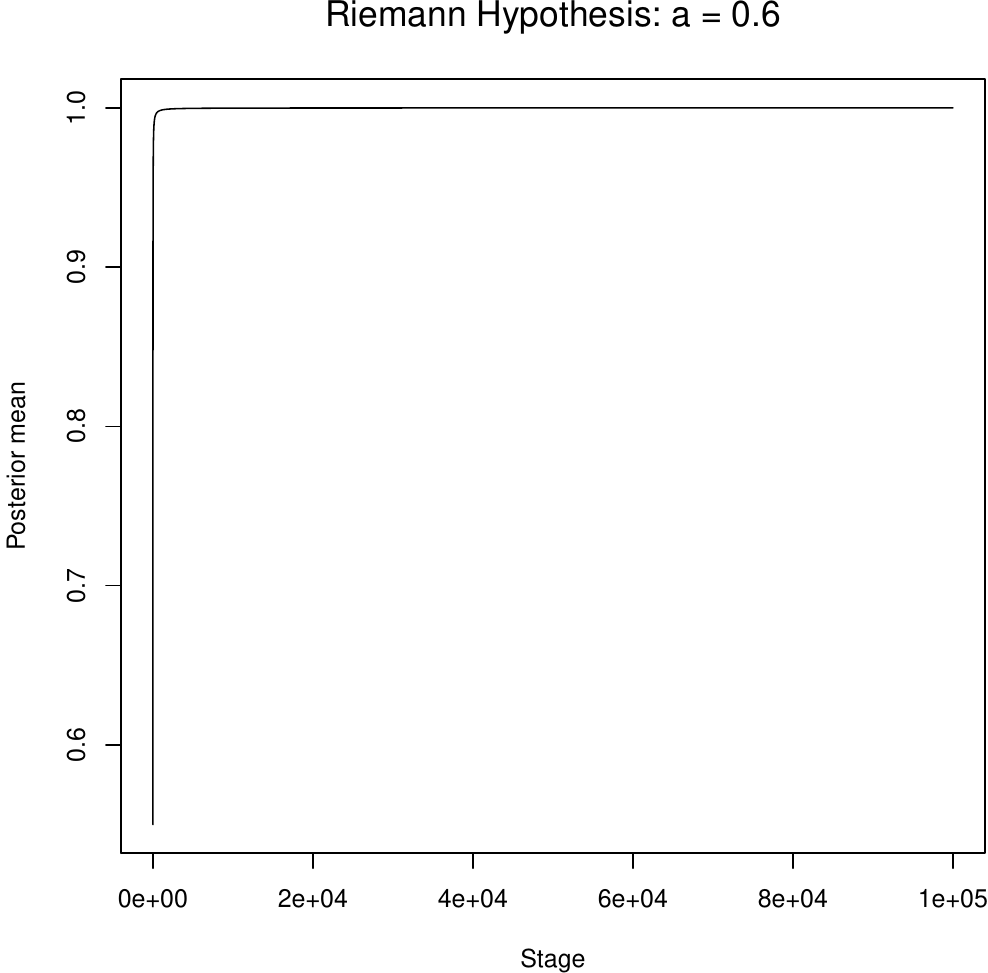}}
\caption{Riemann Hypothesis based on Bayesian multiple limit points theory: Divergence for 
$a=0.1$, $0.2$, $0.3$, $0.4$, $0.5$, $0.6$.}
\label{fig:RH_DP_1}
\end{figure}

\begin{figure}
\centering
\subfigure [Convergence: $a=0.7$, $\frac{m}{M}=\frac{9}{10}$.]{ \label{fig:RH_DP_a_07}
\includegraphics[width=6cm,height=5cm]{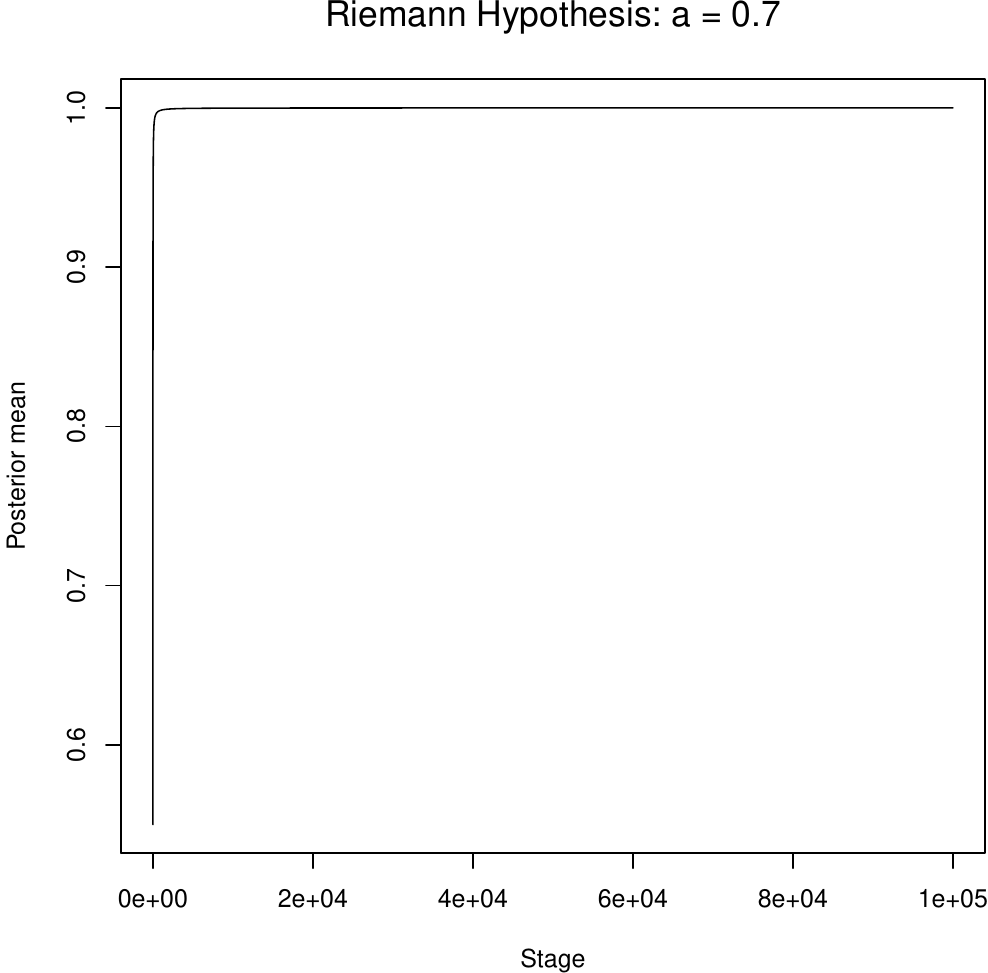}}
\hspace{2mm}
\subfigure [Convergence: $a=0.74$, $\frac{m}{M}=\frac{9}{10}$.]{ \label{fig:RH_DP_a_074}
\includegraphics[width=6cm,height=5cm]{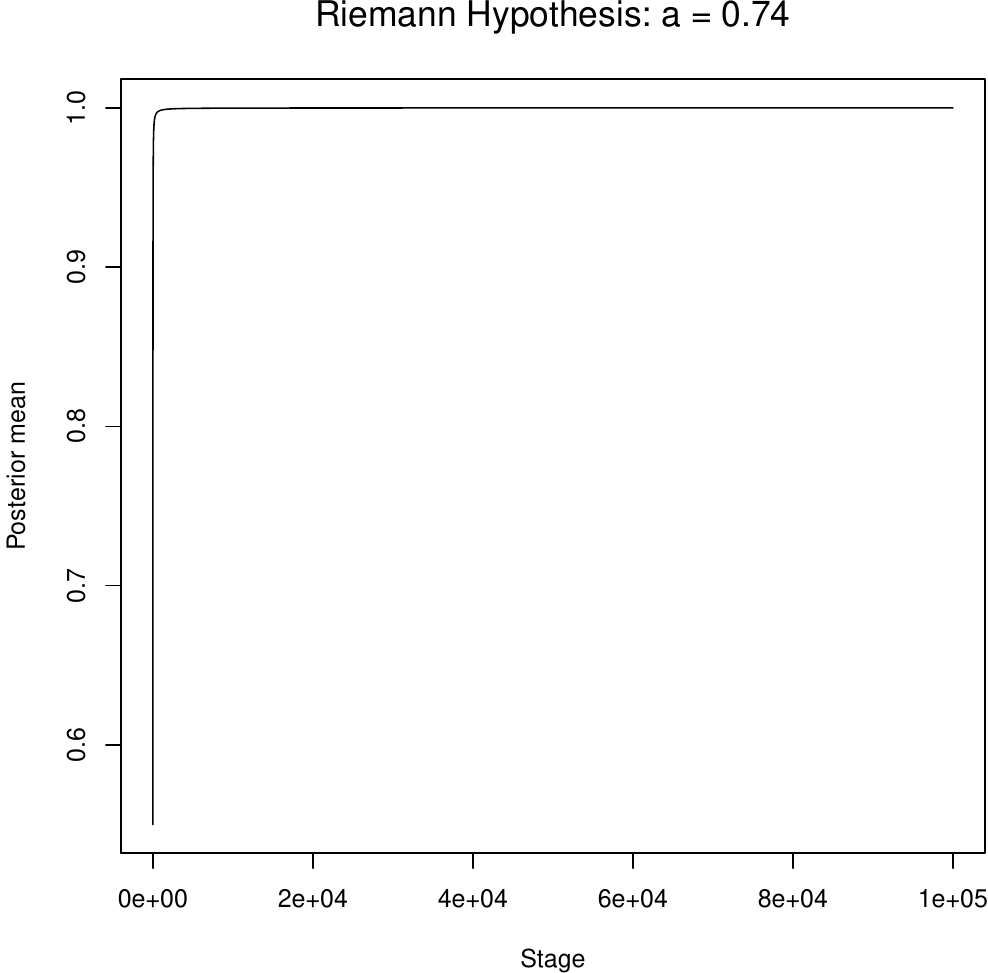}}\\
\subfigure [Convergence: $a=0.8$, $\frac{m}{M}=\frac{8}{10}$.]{ \label{fig:RH_DP_a_08}
\includegraphics[width=6cm,height=5cm]{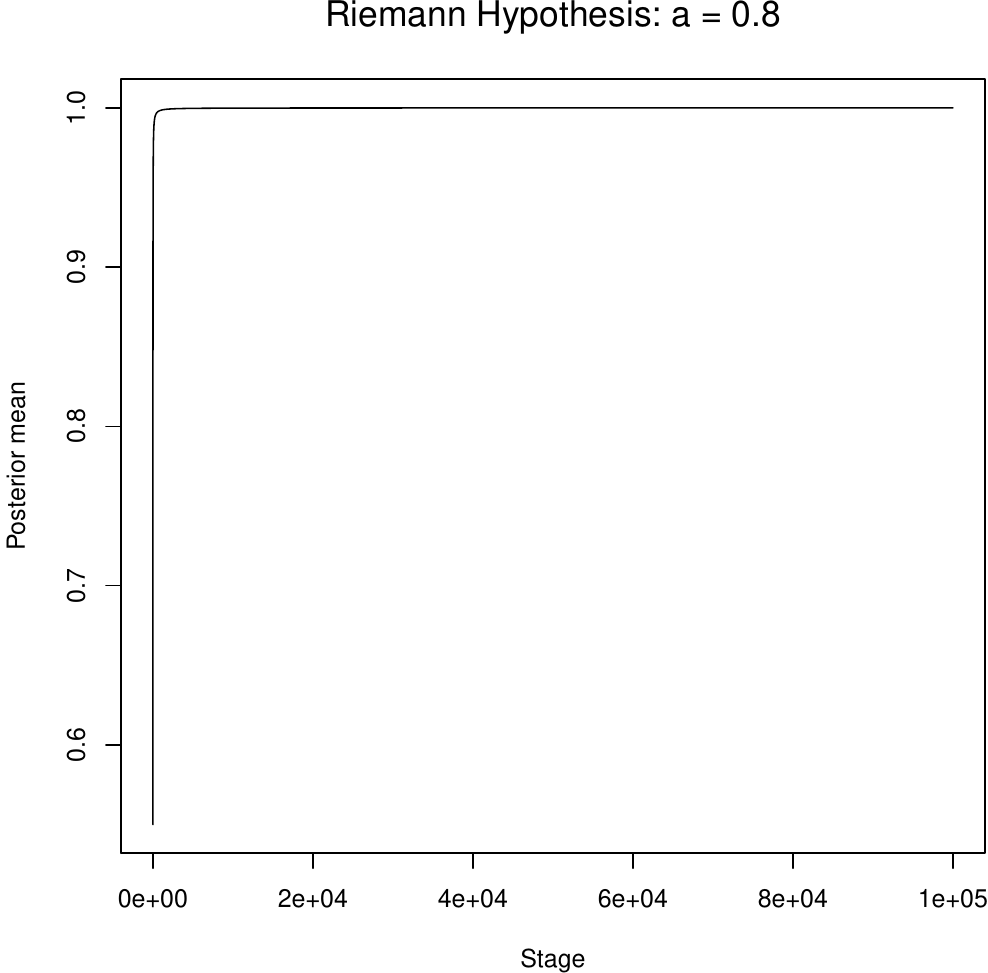}}
\hspace{2mm}
\subfigure [Convergence: $a=0.9$, $\frac{m}{M}=\frac{7}{10}$.]{ \label{fig:RH_DP_a_09}
\includegraphics[width=6cm,height=5cm]{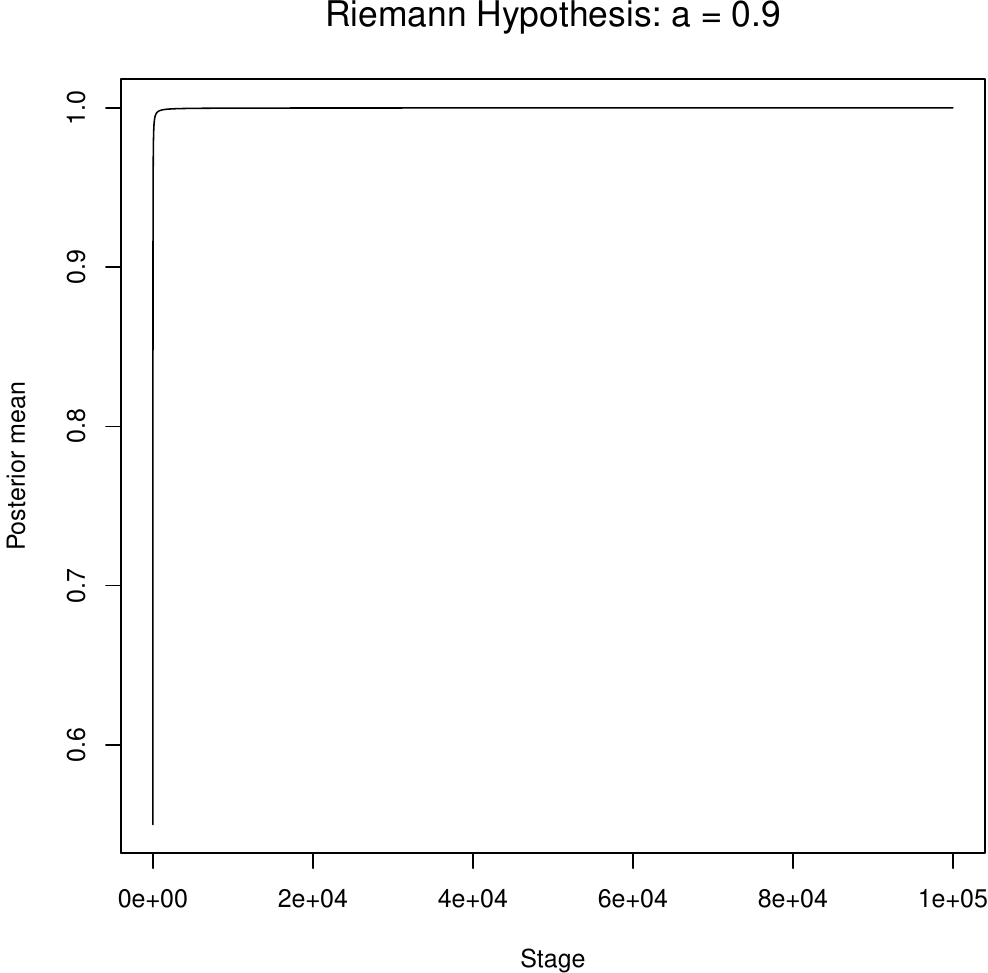}}\\
\subfigure [Convergence: $a=1.0$, $\frac{m}{M}=\frac{5}{10}$.]{ \label{fig:RH_DP_a_1}
\includegraphics[width=6cm,height=5cm]{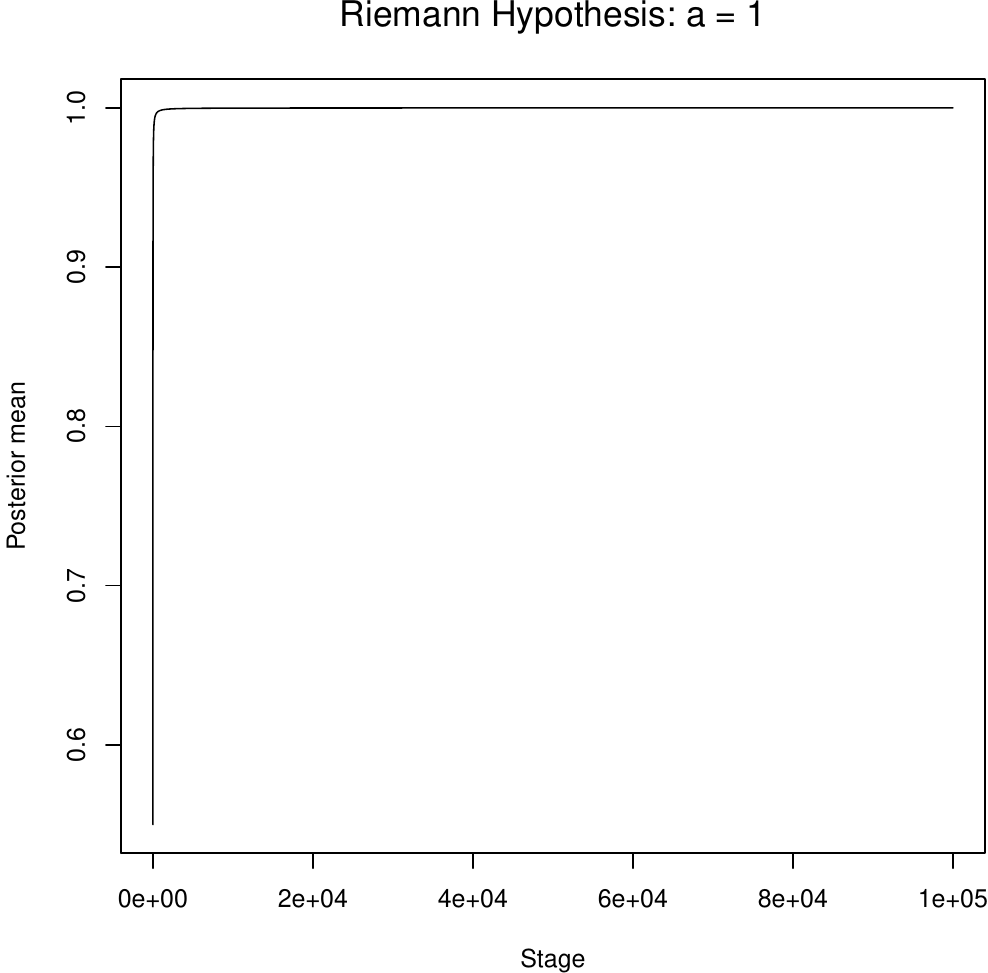}}
\hspace{2mm}
\subfigure [Convergence: $a=1+10^{-10}$, $\frac{m}{M}=\frac{5}{10}$.]{ \label{fig:RH_DP_a_1_e}
\includegraphics[width=6cm,height=5cm]{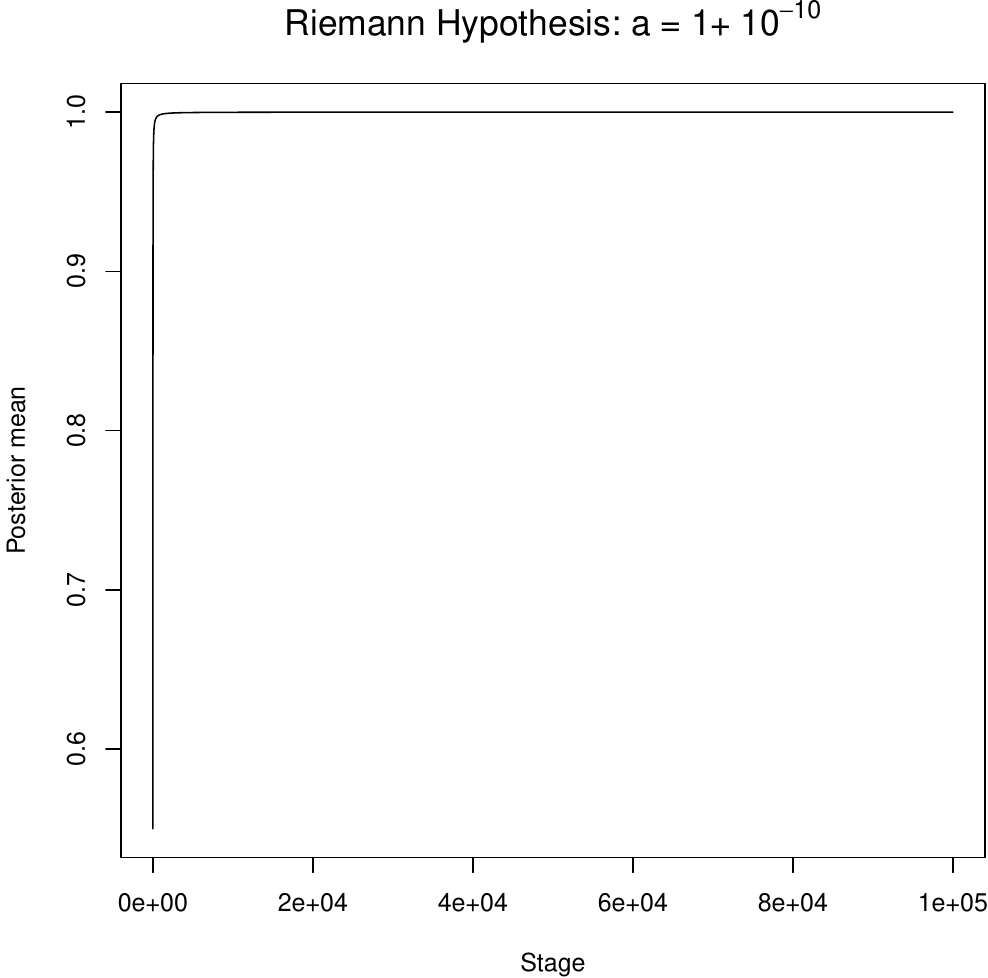}}
\caption{Riemann Hypothesis based on Bayesian multiple limit points theory: Divergence for 
$a=0.7$ but convergence for $a=0.74$, $0.8$, $0.9$, $1$, $1+10^{-10}$.}
\label{fig:RH_DP_2}
\end{figure}

\begin{figure}
\centering
\subfigure [Convergence: $a=2$, $\frac{m}{M}=\frac{1}{10}$.]{ \label{fig:RH_DP_a_2}
\includegraphics[width=6cm,height=5cm]{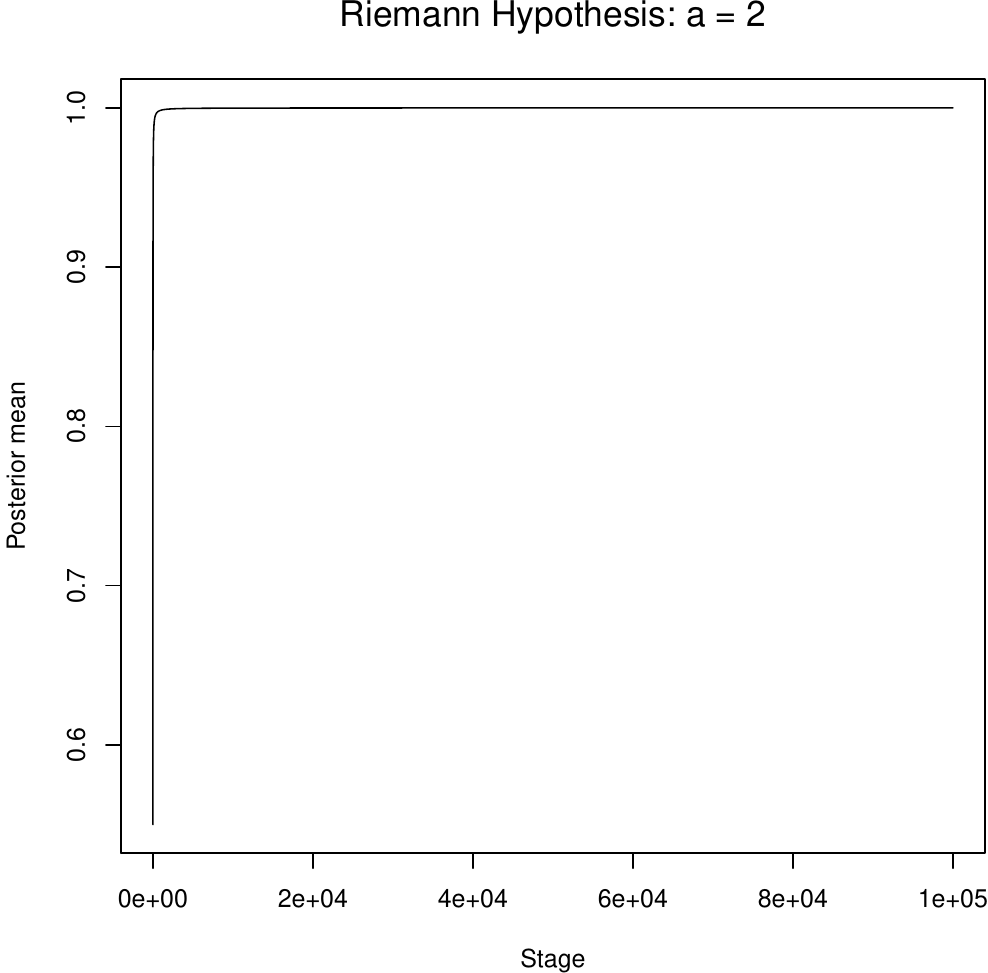}}
\hspace{2mm}
\subfigure [Convergence: $a=3$, $\frac{m}{M}=\frac{1}{10}$.]{ \label{fig:RH_DP_a_3}
\includegraphics[width=6cm,height=5cm]{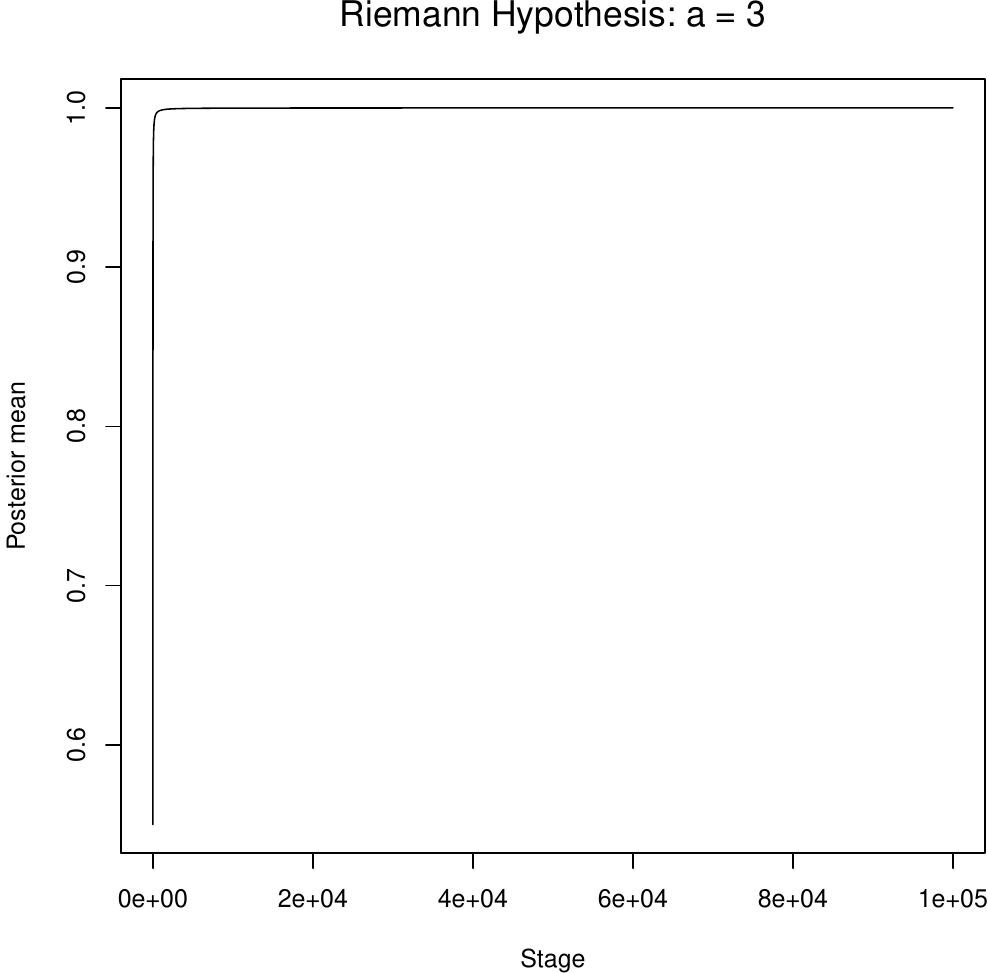}}
\caption{Riemann Hypothesis based on Bayesian multiple limit points theory: Convergence for 
$a=2$, $3$.}
\label{fig:RH_DP_3}
\end{figure}

\section{Characterization of Riemann Hypothesis based on Bernoulli numbers}
\label{sec:Bernoulli_RH}
Characterization of Riemann Hypothesis by convergence of inifinte sums associated with Bernoulli numbers
are provided in \ctn{Carey03} (unpublished, according to our knowledge).
In particular, it has been shown that
Riemann hypothesis is true if and only if the following series is convergent:
\begin{equation}
\tilde S_1=\sum_{m=1}^{\infty}\frac{\pi (4m+3)}{2^{4m+1}}
\sum_{k=0}^m(-1)^k\frac{{2m+1\choose k}{4m+2-2k\choose 2m+1}}{2m+2-2k}
\log\left(\frac{\left(2\pi\right)^{2m+2-2k}\left|B_{2m+2-2k}\right|}{2(2m+2-2k)^2(2m-2k)!}\right),
\label{eq:bernoulli_1}
\end{equation}
where $\left\{B_n;~n=0,1,\ldots\right\}$ are Bernoulli numbers characterized by their generating function
$\sum_{n=0}^{\infty}B_nx^n/n!=x/\left(\exp(x)-1\right)$. The Bernoulli numbers are related to the
Riemann zeta function by (see, for example \ctn{Sury03}) 
\begin{equation}
B_{2m}=(-1)^{m-1}\frac{2(2m)!}{(2\pi)^{2m}}\zeta(2m).
\label{eq:Riemann_Bernoulli}
\end{equation}
\ctn{Carey03} further showed that convergence of the related series
\begin{equation}
\tilde S_2=\sum_{m=1}^{\infty}\frac{\pi (4m+3)}{2^{4m+1}}
\sum_{k=0}^m(-1)^k\frac{{2m+1\choose k}{4m+2-2k\choose 2m+1}}{2m+2-2k}
\log\left((2m+1-2k)\frac{\left|B_{2m+2-2k}\right|}{\left|B_{2m+4-2k}\right|}\right),
\label{eq:bernoulli_2}
\end{equation}
is also equivalent to the assertion that Riemann hypothesis is correct.
However, the terms of both the series (\ref{eq:bernoulli_1}) and (\ref{eq:bernoulli_2}) tend to explode very quickly. 
Stirlings's approximation of the factorials involved in the summands facilitates computation of larger number
of summands compared to the original terms. In this context, note that Stirling's approximation
applied to the factorials in (\ref{eq:Riemann_Bernoulli}), along with the approximation 
$\zeta(2m)\sim 1$, as $m\rightarrow\infty$, 
lead the following asymptotic form of $B_{2m}$ as  as $m\rightarrow\infty$:
\begin{equation}
B_{2m}\sim (-1)^{m-1}4\sqrt{\pi m}\left(\frac{m}{\pi e}\right)^{2m}.
\label{eq:bernoulli_asymp}
\end{equation}
Figure \ref{fig:RH_Bernoulli} shows the logarithms of the first few terms $a_m$ of the above two series,
based on the actual terms $a_m$ and the Stirling-approximated $a_m$ (ignoring a multiplicative constant); 
the rest of the terms become
too large to be reliably computed, even with Stirling's approximation. 
The bottomline that emerges from
(\ref{fig:RH_Bernoulli}) is that the series $\tilde S_1$ and $\tilde S_2$ appear to be clearly divergent,
providing some support to our result on Riemann hypothesis.
\begin{figure}
\centering
\subfigure [Actual terms of series $\tilde S_1$.]{ \label{fig:bernoulli_1_actual}
\includegraphics[width=6cm,height=5cm]{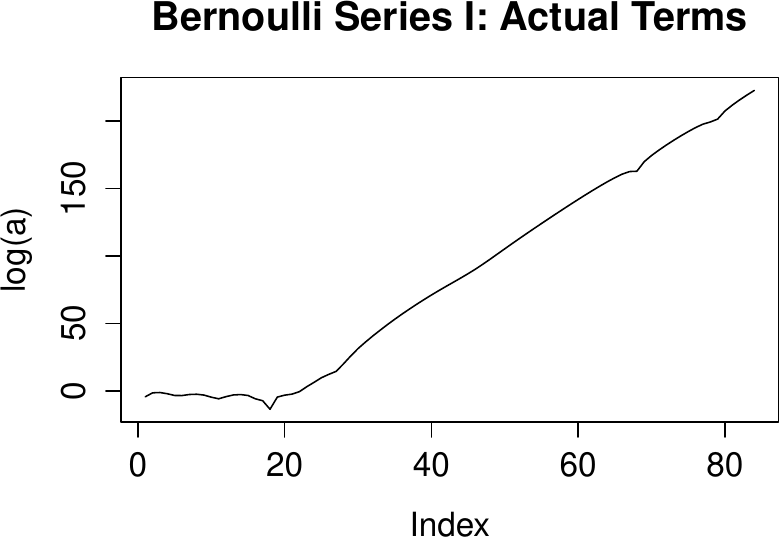}}
\hspace{2mm}
\subfigure [Stirling based terms of series $\tilde S_1$.]{ \label{fig:bernoulli_1_stirling}
\includegraphics[width=6cm,height=5cm]{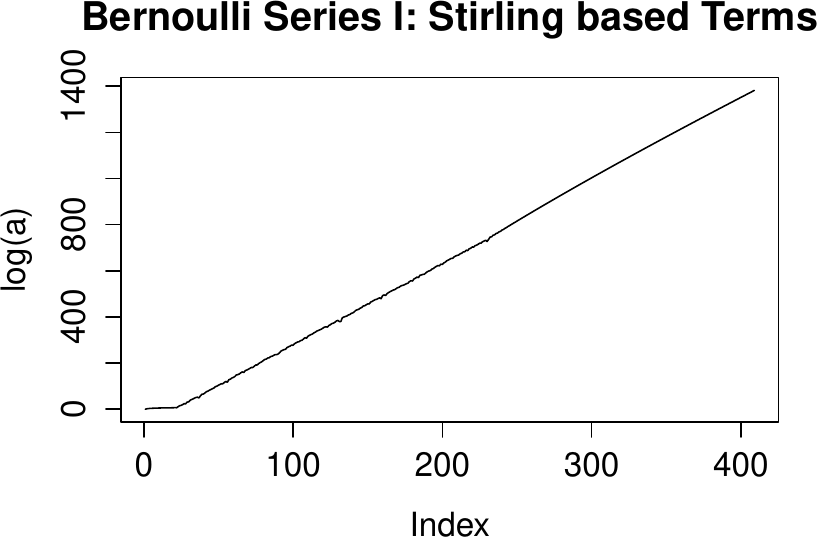}}\\
\subfigure [Actual terms of series $\tilde S_2$.]{ \label{fig:bernoulli_2_actual}
\includegraphics[width=6cm,height=5cm]{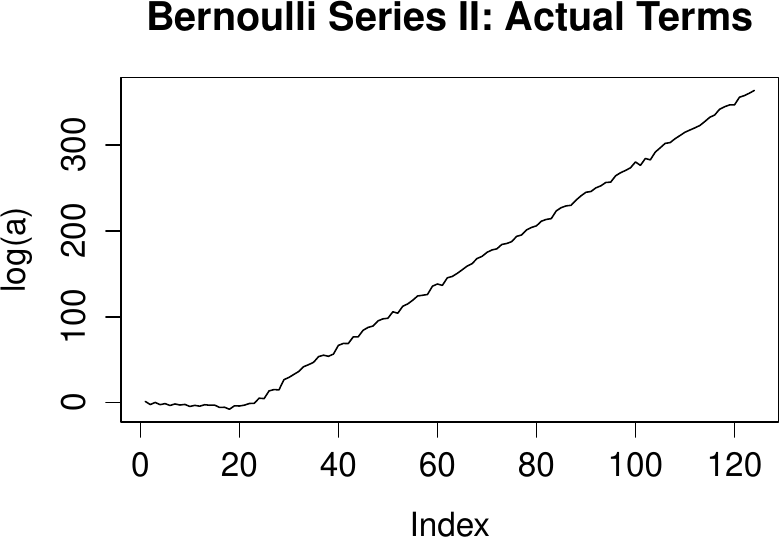}}
\hspace{2mm}
\subfigure [Stirling based terms of series $\tilde S_2$.]{ \label{fig:bernoulli_2_stirling}
\includegraphics[width=6cm,height=5cm]{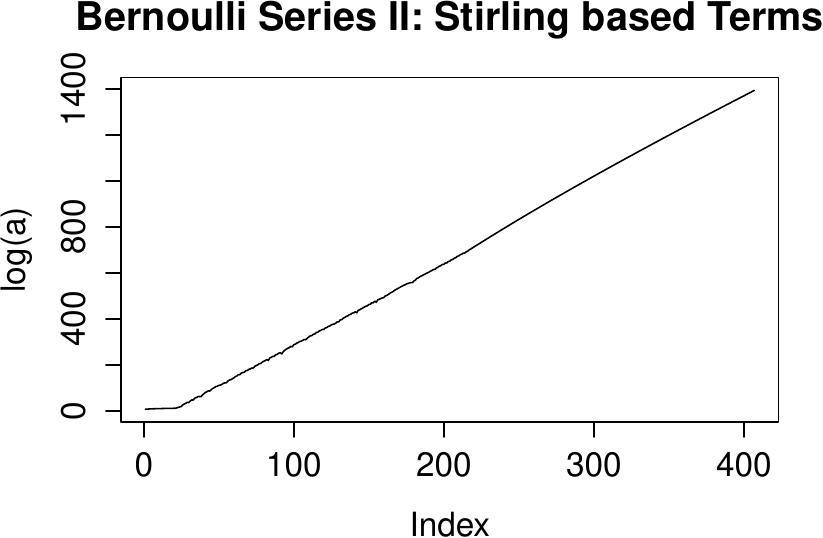}}
\caption{Actual and Stirling-approximated terms $a_m$ of the series $\tilde S_1$ and $\tilde S_2$.}
\label{fig:RH_Bernoulli}
\end{figure}

\newpage

\normalsize
\bibliographystyle{natbib}
\bibliography{irmcmc}

\end{document}